\numberwithin{equation}{section}
\newtheorem{theorem}[equation]{Theorem}
\newtheorem{lemma}[equation]{Lemma}
\newtheorem{proposition}[equation]{Proposition}
\newtheorem{corollary}[equation]{Corollary}
\theoremstyle{definition}
\newtheorem{definition}[equation]{Definition}
\theoremstyle{remark}
\newcommand{\Clim}{\mathrm{C-}\lim}
\newcommand{\UClim}{\mathrm{UC-}\lim}
\newcommand{\probmeas}{\mathcal{P}}
\newcommand{\ud}{\overline{d}}
\newcommand*{\MPS}[1][k]{\mathcal{C}_{#1}}
\newcommand*{\eMPS}[1][k]{\mathcal{C}^{\mathrm{erg}}_{#1}}
\newcommand*{\Inv}[1][k]{\mathcal{I}_{#1}}
\newcommand*{\IF}[1]{I_{#1}} 
\newcommand*{\KF}[1]{K_{#1}} 
\newcommand{\AP}{\mathrm{AP}}
\newcommand{\folner}{F\o{}lner}
\newcommand{\reiter}{Reiter}
\newcommand{\cesaro}{Ces\`aro}
\newcommand{\id}{\mathrm{id}}
\newcommand{\Id}{\mathrm{Id}}
\newcommand*{\inv}{^{-1}}
\newcommand*{\E}{\mathbb{E}}
\newcommand{\dif}{\mathrm{d}}
\newcommand{\R}{\mathbb{R}}
\newcommand{\C}{\mathbb{C}}
\newcommand{\N}{\mathbb{N}}
\newcommand{\Z}{\mathbb{Z}}
\def\<{\left\langle}
\def\>{\right\rangle}
\begin{document}
\subjclass[2010]{37A15}
\title{Lower bound in the Roth theorem for amenable groups}
\date{\today}
\author{Qing Chu}
\address[Qing Chu]{%
  Department of Mathematics\\
  The Ohio State University\\
  100 Math Tower\\
  231 West 18th Avenue\\
  Columbus, OH 43210-1174\\
  USA%
}
\email{chu.270@math.osu.edu}
\author{Pavel Zorin-Kranich}
\address[Pavel Zorin-Kranich]
{Institute of Mathematics\\
Hebrew University, Givat Ram\\
Jerusalem, 91904, Israel}
\email{pzorin@math.huji.ac.il}
\urladdr{http://math.huji.ac.il/~pzorin/}
\keywords{Roth theorem, amenable group}
\begin{abstract}
Let $T_{1}$ and $T_{2}$ be two commuting probability measure-preserving actions of a countable amenable group such that the group spanned by these actions acts ergodically.
We show that $\mu(A\cap T_{1}^{g}A\cap T_{1}^{g}T_{2}^{g}A) > \mu(A)^{4}-\epsilon$ on a syndetic set for any measurable set $A$ and any $\epsilon>0$.
The proof uses the concept of a sated system introduced by Austin.
\end{abstract}
\maketitle

\section*{Introduction}
In this article we are concerned with the correlation sequence $c_{g}=\mu(A\cap T_{1}^{g}A \cap T_{1}^{g}T_{2}^{g}A)$, where $A$ is a positive measure subset of a regular probability space $(X,\mu)$ and $T_{1},T_{2}$ are measure-preserving actions of a countable amenable group $G$ that commute in the sense that $T_{1}^{g}T_{2}^{h}=T_{2}^{h}T_{1}^{g}$ a.e.\ for any $g,h\in G$.
In the case $G=\Z$, $T_{1}=T_{2}$, the ergodic theoretic version \cite{MR0498471} of the Roth theorem \cite{MR0051853} states that $c_{g}$ is positive on a syndetic set.
The ergodic theoretic version of the corners theorem \cite{MR0369299} says that the same continues to hold without the assumption $T_{1}=T_{2}$.
This has been extended to general countable amenable groups in \cite{MR1481813}.

In the ergodic case, namely if the only sets that are both $T_{1}$- and $T_{2}$-invariant are those with measure $0$ or $1$, one can say more.
If $G=\Z$ and $T_{1}=T_{2}$, it is known the correlation function $c_{g}$ is not only positive on a syndetic set, but in fact bounded below by $\mu(A)^{3}-\epsilon$ on a syndetic set for any $\epsilon>0$ \cite[Theorem 1.2]{MR2138068}.
If $G=\Z$ but $T_{1}$ and $T_{2}$ are not necessarily equal, then the best result up to date is that $c_{g}$ is bounded below by $\mu(A)^{4}-\epsilon$ on a syndetic set for any $\epsilon>0$ \cite[Theorem 1.1]{MR2794947}.
The exponent $4$ in the latter result cannot be improved to $3$, see Theorem~\ref{thm:small-correlation}.
Our purpose is to obtain a similar lower bound for general countable amenable groups (in fact a similar result holds for locally compact second countable amenable groups, although its formulation is more involved, see Theorem~\ref{thm:recurrence}).
\begin{theorem}
\label{thm:erg}
Let $G$ be a countable amenable group and $T_{1},T_{2}$ be commuting measure-preserving left $G$-actions on a probability space $(X,\mu)$.
Suppose that the group spanned by $T_{1}$ and $T_{2}$ acts ergodically on $(X,\mu)$.
Then for every measurable set $A\subset X$ and every $\epsilon>0$ the set
\[
R_{\epsilon} := \{ g\in G : \mu(A\cap T_{1}^{g}A \cap T_{1}^{g}T_{2}^{g}A) > \mu(A)^{4} - \epsilon \}
\]
is both left and right syndetic.
\end{theorem}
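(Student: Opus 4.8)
The plan is to separate the problem into a soft ``syndeticity from averages'' step and a hard ``lower bound on averages'' step. For the first step I would show that it suffices to prove the uniform \cesaro{} lower bound
\[
\UClim_{g} \mu(A\cap T_{1}^{g}A\cap T_{1}^{g}T_{2}^{g}A) \geq \mu(A)^{4},
\]
the limit being taken along a two-sided \folner{} sequence and uniformly over translates. Indeed, if $R_{\epsilon}$ failed to be right syndetic, then its complement would be thick, hence would contain translates $Fx$ of arbitrarily large \folner{} sets $F$; averaging $c_{g}=\mu(A\cap T_{1}^{g}A\cap T_{1}^{g}T_{2}^{g}A)$ over such a translate would give a value $\leq \mu(A)^{4}-\epsilon$, contradicting the displayed lower bound. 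Since the uniform \cesaro{} average is insensitive to one-sided translation, the same argument on the other side yields left syndeticity, so both conclusions follow at once from the single averaged inequality.

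For the averaged inequality I would follow Austin's satedness method. First pass to a sated extension $(\tilde X,\tilde\mu,\tilde T_{1},\tilde T_{2})$ of $(X,\mu,T_{1},T_{2})$, sated with respect to the class of factors generated by the invariant $\sigma$-algebras $\mathcal{I}(T_{1})$, $\mathcal{I}(T_{2})$ and $\mathcal{I}(T_{1}T_{2})$ of the three ``edge'' transformations of the configuration. The point of satedness is that it forces these invariant factors to be \emph{characteristic} for the average and to sit in relative independence, which is exactly the information that the Host--Kra / Conze--Lesigne theory supplies for $G=\Z$ but which is not otherwise available for a general countable amenable $G$. Because the factor map $\tilde X\to X$ preserves $\tilde\mu$ and the correlation sequence, a lower bound for the lifted set $\tilde A$ transfers verbatim to $A$, so it is enough to prove the inequality on $\tilde X$; I drop the tildes from now on.

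On the sated system I would run a van der Corput argument over the \folner{} sequence. Writing the correlation as $\int \mathbf 1_{A}\cdot T_{1}^{g}\mathbf 1_{A}\cdot T_{1}^{g}T_{2}^{g}\mathbf 1_{A}$ and successively applying van der Corput together with the mean ergodic theorem, one sees that the transformation $(T_{1}T_{2})^{g}$ governs one outer factor and $T_{2}^{g}$ the difference terms; satedness then lets me replace each copy of $\mathbf 1_{A}$ by its conditional expectation onto the appropriate invariant $\sigma$-algebra without changing the limit. The resulting limit is an integral of a product of two such conditional expectations against $\mathbf 1_{A}$; here I would use ergodicity of $\langle T_{1},T_{2}\rangle$ to collapse the remaining global average to the constant $\mu(A)$, and then two successive applications of the conditional Jensen inequality $\int (\E(\mathbf 1_{A}\mid\mathcal{G}))^{2}\geq \mu(A)^{2}$ to produce the exponent $4$. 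The two squarings (rather than one) are forced by the genuinely two-dimensional ``corner'' geometry of $T_{1}^{g}T_{2}^{g}$, which is also what makes $\mu(A)^{4}$, and not $\mu(A)^{3}$, the right power.

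The step I expect to be the main obstacle is precisely the identification and control of the characteristic factors on the sated extension. Over $\Z$ one would invoke the explicit structure of the Kronecker and Conze--Lesigne factors, but for a general amenable group this is unavailable; the whole weight of the argument rests on showing that satedness with respect to the invariant factors of $T_{1}$, $T_{2}$ and $T_{1}T_{2}$ both makes them characteristic and provides the relative independence needed to evaluate the limiting integral. A secondary technical point, though routine, is to carry the van der Corput and mean ergodic estimates out uniformly over \folner{} translates, since the uniform \cesaro{} (rather than plain \cesaro{}) convergence is what the syndeticity step in the first paragraph consumes.
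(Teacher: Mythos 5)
Your first step reduces the theorem to a statement that is \emph{false}, and this sinks the plan. The unweighted bound
\[
\UClim_{g} \mu(A\cap T_{1}^{g}A\cap T_{1}^{g}T_{2}^{g}A) \geq \mu(A)^{4}
\]
fails already for $G=\Z$. Let $M$ be large, let $X=\Z/M\Z$ with normalized counting measure, let $T_{1}=T_{2}$ both be the rotation $x\mapsto x+1$ (the spanned group then acts transitively, hence ergodically), and let $A\subset\{1,\dots,\lfloor M/3\rfloor\}$ be a Behrend (Salem--Spencer) set with no nontrivial three-term arithmetic progressions and of size $M^{1-o(1)}$, so of density $\alpha\geq M^{-1/4}$ in $\Z/M\Z$; the restriction of $A$ to the first third of $\Z/M\Z$ guarantees that $A$ has no nontrivial progressions modulo $M$ either. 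The correlation $c_{n}=\mu(A\cap T^{n}A\cap T^{2n}A)$ is $M$-periodic, so its \cesaro{} average along any \folner{} sequence of $\Z$, uniformly over translates, equals $\frac{1}{M}\sum_{n\in\Z/M\Z}c_{n}=|A|/M^{2}=\alpha/M$, because only the $n=0$ terms survive. Now $\alpha/M<\alpha^{4}$ precisely when $M>\alpha^{-3}$, which holds since $\alpha^{-3}\leq M^{3/4}<M$. So no amount of uniformity over translates can rescue your displayed inequality, and the soft syndeticity step has nothing to consume. (The paper's appendix on small correlation sequences records the same phenomenon one exponent lower: for an irrational rotation the plain average dips below $\mu(A)^{3}$.)

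This is exactly why the paper never averages $c_{g}$ directly: its core result (Theorem~\ref{thm:recurrence}) is the \emph{weighted} inequality $\UClim_{g}\chi(g)\int f\,T_{1}^{g}f\,T_{1,2}^{g}f\dif\mu\geq(\int f\dif\mu)^{4}-\epsilon$ for a nonnegative almost periodic weight $\chi$ with $\UClim_{g}\chi(g)=1$, and the weight (an idea going back to Frantzikinakis, also used by Bergelson--Host--Kra and Chu) is the essential ingredient, not a removable technicality. Concretely, on an ergodic magic extension the projection of the correlation onto the characteristic factors is an almost periodic function $\kappa(g)$ (Lemma~\ref{lem:approx-by-matrix-coeff}); the inequality $\kappa(\id)\geq\mu(A)^{4}$ of \cite[Lemma 1.6]{MR2794947} --- this is what replaces your ``two applications of Jensen'' --- holds \emph{at the identity} only, whereas the unweighted limit is the mean of $\kappa$ over the associated compact group, which can be far smaller than $\kappa(\id)$; that gap is exactly what the Behrend example exploits. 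The paper's $\chi$ is a normalized continuous cutoff of $\kappa$ itself, concentrating the average on the Bohr-type set where $\kappa(g)\geq\kappa(\id)-\epsilon$, after which Corollary~\ref{cor:char-factors-sated-weighted} gives $\UClim_{g}\chi(g)\,c_{g}=\UClim_{g}\chi(g)\kappa(g)\geq\mu(A)^{4}-\epsilon$; your soft deduction of two-sided syndeticity then does go through essentially as you wrote it, but applied to this weighted average. Two further repairs your outline would need: satedness must be arranged inside the class $\eMPS[2]$ of ergodic systems (Lemma~\ref{lem:ergodic-sated}, Corollary~\ref{cor:ergodic-magic}), since a sated extension taken in all of $\MPS[2]$ need not remain ergodic and your evaluation step invokes ergodicity on the extension; and the relevant structure is not the invariant $\sigma$-algebra of $T_{1}T_{2}$ but its Kronecker-type factor $\KF{1,2}$ (the paper's idempotent class uses only $\IF{1}\vee\IF{2}$), which is what makes $\kappa$ almost periodic in the first place.
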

By a version of the Furstenberg correspondence principle \cite[Theorem 4.17]{MR1776759} this result has a combinatorial interpretation.
Recall that the \emph{upper Banach density} of a subset $A\subset G$ is defined by
$\ud(A) := \sup_{F} \limsup_{N} |A\cap F_{N}|/|F_{N}|$,
where the supremum is taken over all left \folner{} sequences in $G$.
\begin{theorem}
\label{thm:combi}
Let $G$ be a countable amenable group and let $E\subset G\times G$.
Then for every $\epsilon>0$ the set
\[
\{g\in G: \ud(E\cap (g,\id)E\cap (g,g)E) \geq \ud(E)^{4} - \epsilon\}
\]
is both left and right syndetic.
\end{theorem}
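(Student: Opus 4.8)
The plan is to deduce Theorem~\ref{thm:combi} from Theorem~\ref{thm:erg} through a symbolic correspondence, so that the only substantive work is to build a measure-preserving system in which the triple correlation dominates, from below, the density of the combinatorial pattern.

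First I would set up the symbolic system. Let $X = \{0,1\}^{G\times G}$ with the product topology, on which $G\times G$ acts by the left shift $(S^{(a,b)}\omega)(x,y) = \omega(a^{-1}x, b^{-1}y)$; put $T_1^g = S^{(g,e)}$ and $T_2^g = S^{(e,g)}$, two commuting $G$-actions whose span is the full shift. Let $A = \{\omega : \omega(e,e) = 1\}$ be the cylinder at the origin, so that $\omega \in A \cap T_1^g A \cap T_1^g T_2^g A$ iff $\omega(e,e) = \omega(g,e) = \omega(g,g) = 1$ (after the routine but attention-demanding bookkeeping with left versus right translates). Starting from the point $1_E \in X$ and averaging the Dirac masses $\delta_{S^\gamma 1_E}$ over a left \folner{} sequence $(F_N)$ in $G\times G$ chosen so that $|E\cap F_N|/|F_N| \to \ud(E)$, a weak-$*$ limit $\mu$ of these averages is $G\times G$-invariant, satisfies $\mu(A) = \ud(E) =: \delta$, and---because $\ud$ is a supremum over all \folner{} sequences while $\mu$ was built from one---obeys the pointwise domination
\[
\ud\bigl(E\cap(g,\id)E\cap(g,g)E\bigr) \;\geq\; \mu\bigl(A\cap T_1^g A\cap T_1^g T_2^g A\bigr) \qquad (g\in G).
\]

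With this in hand the combinatorial statement follows immediately from Theorem~\ref{thm:erg} applied to $(X,\mu,T_1,T_2)$: the set where the right-hand correlation exceeds $\delta^4 - \epsilon$ is both left and right syndetic, and the displayed inequality transports this to the density on the left. The one hypothesis that is not automatic, and which I expect to be the main obstacle, is ergodicity of the span of $T_1$ and $T_2$, that is, of the full $G\times G$-shift for $\mu$. The naive fix of passing to an ergodic component fails: writing $\mu = \int \mu_y \, d\nu(y)$ and $\delta_y = \mu_y(A)$, one has $\int \delta_y\, d\nu = \delta$, so some components satisfy $\delta_y \geq \delta$, but the domination inequality above was derived for the average $\mu$ and is in general not inherited by an individual $\mu_y$; moreover the syndetic sets supplied by Theorem~\ref{thm:erg} vary with $y$ and cannot be intersected into a single syndetic set.

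I would therefore resolve the ergodicity issue at the level of the correspondence itself, which is what the cited form \cite[Theorem 4.17]{MR1776759} is designed to provide: one arranges the limit measure $\mu$ to be $G\times G$-ergodic while keeping $\mu(A) = \ud(E)$ and the domination inequality, so that Theorem~\ref{thm:erg} applies directly and no decomposition is needed. Should one instead insist on working with the non-ergodic $\mu$ and its decomposition, the convexity estimate $\int \delta_y^4\, d\nu \geq \bigl(\int \delta_y\, d\nu\bigr)^4 = \delta^4$ shows that the target exponent is still $\delta^4$; the remaining difficulty is then precisely to upgrade the per-component conclusion of Theorem~\ref{thm:erg} from ``syndetic'' to a uniform \cesaro{} lower bound that can be integrated against $\nu$ via Fatou's lemma, after which the trivial bound $\mu(A\cap T_1^g A\cap T_1^g T_2^g A) \leq \mu(A) = \delta$ converts the resulting averaged estimate back into syndeticity of $R_\epsilon$. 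Either way, combining the syndetic lower bound for the correlation with the domination inequality yields the theorem.
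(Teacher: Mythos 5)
Your setup---the symbolic space, the weak-$*$ limit of \folner{} averages of Dirac masses, and the domination of the correlation by the upper Banach density via clopen cylinder sets---is the right skeleton, and you correctly isolate ergodicity as the crux, including the correct observation that passing to an ergodic component does not preserve the domination inequality. But at exactly that point the proposal stops being a proof: you assert that the cited correspondence principle ``is designed to provide'' an ergodic limit measure with the domination intact. It is not: \cite[Theorem 4.17]{MR1776759} is the ordinary, non-ergodic correspondence (a weak-$*$ limit of \folner{} averages of point masses has no reason to be ergodic), and producing an ergodic version is precisely the nontrivial content of the paper's Lemma~\ref{lem:correspondence}. Its mechanism is, ironically, the route you dismissed, repaired by a quasi-genericity argument: take $X$ to be the \emph{orbit closure} of $1_{E}$ (not the full shift $\{0,1\}^{G\times G}$ as in your setup --- this matters below); let $\nu$ be the weak-$*$ limit with $\nu(A)=\ud(E)$; choose an ergodic component $\mu$ of $\nu$ with $\mu(A)\geq\nu(A)$. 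The domination inequality for $\mu$ is then not \emph{inherited} from $\nu$ but \emph{re-derived}: by Proposition~\ref{prop:dense-quasi-generic} (Furstenberg, \cite[Proposition 3.9]{MR603625}), every point with dense orbit is quasi-generic for \emph{every} ergodic invariant measure on its orbit closure, so $1_{E}$ is quasi-generic for $\mu$ along some new \folner{} sequence $\Phi$, and since $T^{g_{1}}A\cap\dots\cap T^{g_{k}}A$ is clopen one gets $\mu(T^{g_{1}}A\cap\dots\cap T^{g_{k}}A)=\lim_{N}|\Phi_{N}\cap g_{1}E\cap\dots\cap g_{k}E|/|\Phi_{N}|\leq\ud(g_{1}E\cap\dots\cap g_{k}E)$. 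Note that this yields $\mu(A)\geq\ud(E)$ rather than equality (which suffices), and that the quasi-genericity step is unavailable in your setup, because $1_{E}$ does not have dense orbit in the full shift.

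Your fallback route (integrate the per-component conclusion against $\nu$ via Fatou) does not close the gap either, for the reason you yourself state: the syndetic set in Theorem~\ref{thm:erg}, and the almost periodic weight $\chi$ in the stronger Theorem~\ref{thm:recurrence}, both depend on the ergodic component, and there is no mechanism in the proposal for merging them into a single unweighted estimate; a large weighted \cesaro{} average says nothing about the unweighted one when the weight varies with the component. So the proposal identifies the obstacle but does not surmount it; the missing idea is the Lesigne/Bergelson--Host--Kra trick of combining the ergodic decomposition with quasi-genericity of the dense-orbit point, which is what Proposition~\ref{prop:dense-quasi-generic} and Lemma~\ref{lem:correspondence} supply.
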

It is known that no result similar to Theorem~\ref{thm:erg} can hold for four commuting measure-preserving actions \cite[Theorem 1.3]{MR2138068}.
The question as to whether a power lower bound exists for three commuting actions remains open even for $G=\Z$.
A proof that the corresponding \cesaro{} limit is positive has been recently made available by Austin \cite{arXiv:1309.4315}.

At a first glance it might appear that at least non-triviality of $R_{\epsilon}$ in Theorem~\ref{thm:erg} would follow from the previously known case $G=\Z$ upon restriction to a suitable cyclic subgroup of $G$.
However, there are at least two obstructions to such reasoning.
Firstly, $G$ can be a torsion group.
Secondly, even if there exist copies of $\Z$ in $G$, the restrictions of $(T_{1},T_{2})$ to these copies can be non-ergodic, preventing one from applying the $G=\Z$ case.

Our arguments rely on a \emph{magic extension} of the system $(X,\mu,T_{1},T_{2})$ obtained as a \emph{sated extension} in the sense of Austin \cite{MR2941687}.
The concept of a magic extension was introduced by Host \cite{MR2539560}.
In course of the proof we obtain new proofs of two further results about commuting actions of amenable groups.
The first result is an extension to arbitrary left \folner{} sequences of the convergence theorem for cubic averages due to Griesmer \cite{2008arXiv0812.1968G} (Corollary~\ref{cor:cubic-convergence}).
This has been previously shown by Bergelson and Leibman (private communication).
The second result is the $k=3$ case of the convergence theorem for multiple ergodic averages \cite[Theorem 1.1(2)]{arxiv:1111.7292} (Proposition~\ref{prop:conv-k=3}).
Another proof (for arbitrary $k$) using sated extensions appeared after the completion of this work in \cite[Theorem A]{arXiv:1309.4315}.

\section{Preliminaries}
\subsection{\folner{} and \reiter{} sequences}
Let $G$ be a locally compact $\sigma$-compact (lcsc) group with a left Haar measure $m$.
If the group $G$ is amenable, then by \cite[Theorem 4.16]{MR961261} it admits a \emph{left \folner{} sequence}, that is, a sequence of non-null compact sets $F_{N}\subset G$ such that $m(gF_{N}\Delta F_{N})/m(F_{N}) \to 0$ uniformly for $g$ in compact subsets of $G$.

The space $M(G)$ of complex Radon measures on $G$ is a Banach $*$-algebra with the convolution $\int f \dif(\mu * \nu) = \int_{g,h} f(gh) \dif\mu(g)\dif\nu(h)$, the involution $\int f \dif\mu^{*} = \int f(g\inv) \dif\bar\mu(g)$, and the total variation norm (see \cite{MR1397028} for more details).
Let $\probmeas(G)\subset M(G)$ be the set of probability Radon measures.
A sequence $(F_{N})\subset \probmeas(G)$ is called a \emph{left \reiter{} sequence} if $\|h*F_{N}-F_{N}\| \to 0$ as $N\to\infty$ for every $h\in \probmeas(G)$.
Right and two-sided \reiter{} sequences are defined analogously.
If $(F_{N})$ is a left \reiter{} sequence and $h\in \probmeas(G)$, $(F'_{N})\subset \probmeas(G)$, then $(h*F_{N})$ and $(F_{N}*F'_{N})$ are again left \reiter{} sequences, as follows from associativity of convolution.
Note that $(F_{N})$ is a left \reiter{} sequence if and only if $(F_{N}^{*})$ is a right \reiter{} sequence.

Given a left \folner{} sequence $F=(F_{N})$, the sequence $(m(F_{N})\inv 1_{F_{N}} m)$ is a left \reiter{} sequence, which will be denoted by the same symbol $F$.
The main conceptual reason to work with \reiter{} sequences rather than \folner{} sequences is that two-sided \reiter{} sequences also exist in non-unimodular groups, as opposed to two-sided \folner{} sequences.
Since for our purposes both concepts work equally well, we chose to stick with the more general one.

For a fixed \reiter{} sequence $F$ we write $\Clim_{g} u_{g} := \lim_{N} \int u_{g} \dif F_{N}(g)$ for the \cesaro{} limit along $F$ if this limit exists.
If the \cesaro{} limit exists for every \reiter{} sequence, then it does not depend on the \reiter{} sequence.
In this case we call it the uniform \cesaro{} limit and denote it by $\UClim_{g} u_{g}$.
Recall the van der Corput lemma.
\begin{lemma}[{\cite[Lemma 4.2]{MR1481813}}]
\label{lem:vdC}
Let $G$ be a lcsc group with a left \reiter{} sequence $F$.
Suppose that $g\mapsto u_{g}$ is a bounded measurable function to a Hilbert space.
Then
\[
\limsup_{m} \big\| \int_{g} u_{g} \dif F_{m}(g) \big\|^{2}
\leq
\inf_{H\in \probmeas(G)} \limsup_{m} \int_{g}\int_{l}\int_{h} \<u_{hg},u_{lg}\> \dif H(h) \dif H(l) \dif F_{m}(g).
\]
\end{lemma}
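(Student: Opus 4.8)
The plan is to run the classical van der Corput argument in the measure-algebra language: exploit the left \reiter{} property to smooth $F_{m}$ on the left by the test measure $H$, and then apply the Cauchy--Schwarz inequality against the probability measure $F_{m}$.

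First I would fix $H\in\probmeas(G)$; since $u$ is bounded, write $\|u\|_{\infty}=\sup_{g}\|u_{g}\|<\infty$. The crucial observation is that left-convolving the \reiter{} sequence by $H$ perturbs the average only negligibly: because $\int u\dif(H*F_{m})-\int u\dif F_{m}$ has norm at most $\|u\|_{\infty}\,\|H*F_{m}-F_{m}\|$, which tends to $0$ by the defining property of a left \reiter{} sequence, the norms of the two averages have the same $\limsup$, and hence
\[
\limsup_{m}\big\|\int_{g}u_{g}\dif F_{m}(g)\big\|^{2}
=\limsup_{m}\big\|\int u\dif(H*F_{m})\big\|^{2}.
\]

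Next I would unfold the convolution. Using $\int f\dif(\mu*\nu)=\int_{g,h}f(gh)\dif\mu(g)\dif\nu(h)$ with $\mu=H$ and $\nu=F_{m}$ gives $\int u\dif(H*F_{m})=\int_{g}v_{g}\dif F_{m}(g)$, where $v_{g}:=\int_{h}u_{hg}\dif H(h)$ is the left $H$-smoothing of $u$ (note that the left \reiter{}/left convolution setup is exactly what produces the shifts $u_{hg}$ appearing on the right-hand side). Applying Cauchy--Schwarz against the probability measure $F_{m}$ yields $\big\|\int_{g}v_{g}\dif F_{m}(g)\big\|^{2}\le\int_{g}\|v_{g}\|^{2}\dif F_{m}(g)$, and expanding the squared norm of the Bochner integral gives $\|v_{g}\|^{2}=\int_{h}\int_{l}\<u_{hg},u_{lg}\>\dif H(h)\dif H(l)$. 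Substituting and taking $\limsup_{m}$ bounds the left-hand side by $\limsup_{m}\int_{g}\int_{l}\int_{h}\<u_{hg},u_{lg}\>\dif H(h)\dif H(l)\dif F_{m}(g)$; since $H$ was arbitrary, taking the infimum over $H\in\probmeas(G)$ finishes the proof.

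The argument has no deep obstacle; the care is entirely in soft functional-analytic details. I would need the estimate $\|\int u\dif\rho\|\le\|u\|_{\infty}\|\rho\|$ for the total-variation norm applied to the complex measure $\rho=H*F_{m}-F_{m}$, the identity $\<\int_{h}u_{hg}\dif H(h),w\>=\int_{h}\<u_{hg},w\>\dif H(h)$ to expand $\|v_{g}\|^{2}$, and a Fubini interchange to reorder the $h$- and $l$-integrations into the stated order. Joint measurability of $(h,g)\mapsto u_{hg}$ together with boundedness guarantees that every integral converges and that all these interchanges are legitimate.
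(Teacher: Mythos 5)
Your proof is correct and follows essentially the same route as the paper: replace $F_{m}$ by $H*F_{m}$ using the left \reiter{} property, then apply the triangle and Cauchy--Schwarz inequalities against the probability measure $F_{m}$ and expand $\big\|\int_{h}u_{hg}\dif H(h)\big\|^{2}$ into the double inner-product integral. The only difference is that you spell out the quantitative bound $\|u\|_{\infty}\|H*F_{m}-F_{m}\|$ and the Fubini bookkeeping, which the paper leaves implicit.
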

\begin{proof}
By definition of a \reiter{} sequence we have for every $H\in \probmeas(G)$
\[
\lim_{m} \big\|\int_{g} u_{g} \dif (F_{m}-H*F_{m})(g) \big\| = 0.
\]
Hence it suffices to estimate
\begin{multline*}
\limsup_{m} \big\| \int_{g}\int_{h} u_{hg} \dif H(h) \dif F_{m}(g) \big\|^{2}
\leq
\limsup_{m} \big( \int_{g} \big\| \int_{h} u_{hg} \dif H(h) \big\| \dif F_{m}(g) \big)^{2}\\
\leq
\limsup_{m} \int_{g} \big\| \int_{h} u_{hg} \dif H(h) \big\|^{2} \dif F_{m}(g)
=
\limsup_{m} \int_{g}\int_{l}\int_{h} \<u_{hg},u_{lg}\> \dif H(h)\dif H(l) \dif F_{m}(g),
\end{multline*}
where we used the triangle and the Cauchy--Schwarz inequalities.
\end{proof}
\begin{corollary}
\label{cor:vdC}
Under the assumptions of Lemma~\ref{lem:vdC} we have
\[
\limsup_{m} \big\| \int_{g} u_{g} \dif F_{m}(g) \big\|^{2}
\leq
\inf_{n} \limsup_{m} \int_{g} \int_{h} \<u_{g},u_{hg}\> \dif F'_{n}(h) \dif F_{m}(g)
\]
for some two-sided \reiter{} sequence $F'$.
\end{corollary}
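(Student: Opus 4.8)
The plan is to derive Corollary~\ref{cor:vdC} from Lemma~\ref{lem:vdC} by converting, for each fixed $H\in\probmeas(G)$, the doubly averaged inner product on the right-hand side of the lemma into a singly averaged one against the symmetric measure $H*H^{*}$, and then restricting the infimum over $H$ to a left \reiter{} sequence.

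First I would rewrite the triple integral appearing in Lemma~\ref{lem:vdC}. Fixing $h,l$ and substituting $p=hg$ in the innermost integral (so that $p$ is distributed according to $\delta_{h}*F_{m}$ when $g$ is distributed according to $F_{m}$) turns $u_{hg}$ into $u_{p}$ and $u_{lg}$ into $u_{(lh\inv)p}$. Writing $k=lh\inv$, this identifies the triple integral exactly with $\int_{h}\int_{l}\int_{p}\<u_{p},u_{kp}\>\dif(\delta_{h}*F_{m})(p)\dif H(l)\dif H(h)$.

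Next I would replace $\delta_{h}*F_{m}$ by $F_{m}$. The resulting error is at most $\sup_{g}\|u_{g}\|^{2}\int_{h}\|\delta_{h}*F_{m}-F_{m}\|\dif H(h)$, and since $F$ is a left \reiter{} sequence each integrand $\|\delta_{h}*F_{m}-F_{m}\|$ tends to $0$ as $m\to\infty$ and is bounded by $2$, so dominated convergence makes this error vanish. In the remaining main term the integrand depends on $(h,l)$ only through $k=lh\inv$, whose law under independent $H$-distributed $h,l$ is exactly $H*H^{*}$ by the definitions of convolution and involution. Relabelling, this shows that $\limsup_{m}$ of the triple integral equals $\limsup_{m}\int_{g}\int_{h}\<u_{g},u_{hg}\>\dif(H*H^{*})(h)\dif F_{m}(g)$.

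Finally, Lemma~\ref{lem:vdC} now reads $\limsup_{m}\|\int_{g}u_{g}\dif F_{m}(g)\|^{2}\le\inf_{H}\limsup_{m}\int_{g}\int_{h}\<u_{g},u_{hg}\>\dif(H*H^{*})(h)\dif F_{m}(g)$. Restricting the infimum to a fixed left \reiter{} sequence $(H_{n})$ (for instance $H_{n}=F_{n}$) can only increase it, so setting $F'_{n}:=H_{n}*H_{n}^{*}$ yields the asserted inequality with the infimum taken over $n$. It then remains to check that $F'=(F'_{n})$ is a two-sided \reiter{} sequence, which follows from submultiplicativity of the total variation norm: $\|H'*(H_{n}*H_{n}^{*})-H_{n}*H_{n}^{*}\|\le\|H'*H_{n}-H_{n}\|\to0$ because $(H_{n})$ is left \reiter{}, and $\|(H_{n}*H_{n}^{*})*H'-H_{n}*H_{n}^{*}\|\le\|H_{n}^{*}*H'-H_{n}^{*}\|\to0$ because $(H_{n}^{*})$ is right \reiter{}. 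The only step requiring more than formal manipulation is the dominated convergence argument for $\int_{h}\|\delta_{h}*F_{m}-F_{m}\|\dif H(h)\to0$, which in addition needs the routine measurability of $h\mapsto\|\delta_{h}*F_{m}-F_{m}\|$.
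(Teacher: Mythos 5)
Your proof is correct and takes essentially the same route as the paper: both arguments perform the substitution $g\mapsto h\inv g$ (justified by the \reiter{} property together with dominated convergence in $h$), identify the law of $lh\inv$ under independent draws with the convolution $H*H^{*}$, and then specialize $H$ to the sequence $F_{n}$, arriving at $F'_{n}=F_{n}*F_{n}^{*}$. The only cosmetic differences are that the paper substitutes $H=F_{n}$ at the outset whereas you keep $H$ general until the final step, and that you verify explicitly that $F'_{n}$ is a two-sided \reiter{} sequence, a point the paper covers by the general remarks on convolutions of \reiter{} sequences in its preliminaries.
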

\begin{proof}
Substitute $H=F_{n}$ in Lemma~\ref{lem:vdC}.
By the Fubini theorem and the \reiter{} property we have
\begin{multline*}
\limsup_{m} \iiint \<u_{hg},u_{lg}\> \dif F_{n}(h)\dif F_{n}(l)\dif F_{m}(g)\\
=
\limsup_{m} \iiint \<u_{hg},u_{lg}\> \dif (\delta_{h\inv}*F_{m})(g) \dif F_{n}(h) \dif F_{n}(l)\\
=
\limsup_{m} \int_{g} \iint \<u_{g},u_{lh\inv g}\> \dif F_{n}(h)\dif F_{n}(l)\dif F_{m}(g).
\end{multline*}
The inner integrand can be written as
\begin{multline*}
\iint \<u_{g},u_{lh\inv g}\> \dif F_{n}(l) \dif F_{n}(h)
=
\iint \<u_{g},u_{lh g}\> \dif F_{n}(l) \dif F_{n}^{*}(h)
=
\int \<u_{g},u_{h g}\> \dif (F_{n}*F_{n}^{*})(h),
\end{multline*}
and we obtain the conclusion with $F'_{n} = F_{n} * F_{n}^{*}$.
\end{proof}

\subsection{Category of measure-preserving systems}
From now on let $G$ be a lcsc amenable group with a left \reiter{} sequence $F_{N}$.
\begin{definition}
The category $\MPS$ of $k$-tuples of commuting measure-preserving actions consists of the following data.
The objects are the tuples $(X,\mathcal{X},\mu,T_{1},\dots,T_{k})$, where $(X,\mathcal{X},\mu)$ is a regular Borel probability space and $T_{1},\dots,T_{k}$ are continuous commuting measure-preserving $G$-actions.
The morphisms are the continuous factor maps, that is, continuous measure-preserving maps that intertwine the respective $G$-actions.
\end{definition}
The restriction to continuous actions on regular Borel spaces is not substantial.
Indeed, suppose that we are given a measurable measure-preserving $G$-action on a separable measure space $X$.
The associated unitary antirepresentation on $L^{2}(X)$ is weakly measurable, and therefore strongly continuous by \cite[22.20(b)]{MR551496}.
In view of \cite[Theorem 7.5.5]{MR548006} this implies that the action admits a topological model on a compact metric space.

Measure-preserving actions on $X$ induce anti-actions on the spaces $L^{p}(X)$ that are denoted by the same symbol.

\subsection{Conditionally almost periodic and weakly mixing functions}
The following result is folklore and goes back to Furstenberg in the case $G=\Z$.
A proof for \folner{} sequences in general lcsc amenable groups will appear in \cite{robertson}; the same proof works without further changes for \reiter{} sequences.
The very last assertion is a pure Hilbert space result whose proof may be found in \cite{MR0174705}.
\begin{theorem}
\label{thm:A+W}
Let $G$ be a lcsc amenable group, $T$ be a measure-preserving $G$-action on a regular Borel probability space $X$ and $X\to Y$ be a factor map.
Then we have
\[
L^{2}(X) = A(X|Y,T) \oplus W(X|Y,T),
\]
where
\begin{enumerate}
\item the space $A(X|Y,T)$ is spanned by the finite rank $T$-invariant $L^{\infty}(Y)$-submodules of $L^{\infty}(X)$,\footnote{All that we need to know is that both the $Y$-measurable and the $T$-invariant functions are in $A(X|Y,T)$.} and
\item the space $W(X|Y,T)$ consists of the functions $f$ such that every $h\in L^{\infty}(X)$ and some/every left \reiter{} sequence $F$ we have
\[
\lim_{n} \int_{g} \| \E(hT^{g}f|Y) \|^{2} \dif F_{n}(g) = 0.
\]
\end{enumerate}
Moreover, for any two factor maps $X_{1}\to Y$ and $X_{2}\to Y$ we have
\[
A(X_{1}\times_{Y} X_{2}|Y,T) = A(X_{1}|Y,T) \otimes_{Y} A(X_{2}|Y,T).
\]
Finally, if $Y$ is the trivial factor, then $A(X|Y,T)$ is the closed linear span of the finite-dimensional $T$-invariant subspaces.
\end{theorem}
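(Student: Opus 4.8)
The plan is to recognize Theorem~\ref{thm:A+W} as a relative Jacobs--de Leeuw--Glicksberg splitting carried out over $L^{\infty}(Y)$ rather than over $\C$: I regard $L^{2}(X)$ as a module over $L^{\infty}(Y)$ equipped with the conditional inner product $\langle f_{1},f_{2}\rangle_{Y}:=\E(f_{1}\overline{f_{2}}\mid Y)$, on which each $T^{g}$ acts as an isometry intertwining the induced action on $Y$. Since $A:=A(X\mid Y,T)$ is by definition the closed span of the finite-rank $T$-invariant submodules, the decomposition $L^{2}(X)=A\oplus A^{\perp}$ is automatic, and the entire content of the first assertion is the identification $A^{\perp}=W(X\mid Y,T)$. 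To compare the two spaces I would first recast the defining condition of $W$ as a joining condition. Writing $\tilde T=T\times T$ for the diagonal action on the fiber product $X\times_{Y}X$ with relative product measure $\tilde\mu$, a direct computation gives, for $f,h\in L^{\infty}(X)$,
\[
\int_{g}\|\E(hT^{g}f\mid Y)\|_{L^{2}(Y)}^{2}\,\dif F_{n}(g)=\Big\langle\int_{g}\tilde T^{g}(f\otimes\bar f)\,\dif F_{n}(g),\ \bar h\otimes h\Big\rangle_{L^{2}(\tilde\mu)}.
\]
By the mean ergodic theorem for $\tilde T$ (immediate from the \reiter{} property) the inner averages converge to $\Theta_{f}:=P_{\tilde{\mathcal I}}(f\otimes\bar f)$, the orthogonal projection of $f\otimes\bar f$ onto the $\tilde T$-invariant functions; in particular the limit exists and is independent of $F$, giving the ``some/every'' clause. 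Since $h\mapsto\langle\Theta_{f},\bar h\otimes h\rangle$ is a nonnegative form that vanishes identically exactly when $\Theta_{f}\perp\overline{\mathrm{span}}\{\bar h\otimes h\}=L^{2}(\tilde\mu)$, polarization yields the clean reformulation $f\in W\iff\Theta_{f}=0$.

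For the inclusion $W\subseteq A^{\perp}$ I would argue one module at a time. Fix a finite-rank invariant submodule $M$ with a conditionally orthonormal basis $e_{1},\dots,e_{r}$, so that $\E(e_{i}\overline{e_{j}}\mid Y)=\delta_{ij}$ and the relative projection onto $M$ is $f\mapsto\sum_{i}\E(f\overline{e_{i}}\mid Y)\,e_{i}$. Because $M$ is $T$-invariant this projection commutes with each $T^{g}$, so $\|\mathrm{proj}_{M}^{Y}(T^{g}f)\|_{L^{2}(X)}^{2}=\|\mathrm{proj}_{M}^{Y}f\|_{L^{2}(X)}^{2}$ is constant in $g$. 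On the other hand, applying the defining condition of $W$ with $h=\overline{e_{i}}$ and summing over $i$ shows that the \cesaro{} average of $\|\mathrm{proj}_{M}^{Y}(T^{g}f)\|^{2}$ tends to $0$; hence $\mathrm{proj}_{M}^{Y}f=0$ and $f\perp M$. Letting $M$ vary over all finite-rank invariant submodules gives $f\perp A$.

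The reverse inclusion $A^{\perp}\subseteq W$ is the main obstacle, and it is where relative compactness enters. Given $f\perp A$, I would attach to the invariant self-joining $\Theta_{f}$ the integral operator $K_{f}$ on $L^{2}(X)$ with kernel $\Theta_{f}$ over $Y$. Because $\Theta_{f}$ is $\tilde T$-invariant, $K_{f}$ is positive, self-adjoint, $L^{\infty}(Y)$-linear, and commutes with the $G$-action; and because $\Theta_{f}$ is square-integrable over the fibers, $K_{f}$ is \emph{conditionally Hilbert--Schmidt}, so its relative spectral decomposition expresses $\overline{\mathrm{range}}(K_{f})$ as an orthogonal sum of eigenmodules $\ker(K_{f}-\lambda)$, $\lambda>0$, each of which is a finite-rank $T$-invariant $L^{\infty}(Y)$-submodule and hence contained in $A$. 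A short computation identifies $\langle K_{f}f,f\rangle_{L^{2}(X)}$ with $\|\Theta_{f}\|_{L^{2}(\tilde\mu)}^{2}$; since $K_{f}f\in A$ while $f\perp A$, this forces $\Theta_{f}=0$, i.e.\ $f\in W$. Establishing that $K_{f}$ is genuinely conditionally compact with finite-rank eigenmodules is the delicate point, and it is exactly here that I would invoke the Hilbert space input of \cite{MR0174705}; it also immediately yields the final assertion, since for trivial $Y$ the modules degenerate to finite-dimensional $T$-invariant subspaces and $K_{f}$ to an ordinary Hilbert--Schmidt operator.

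Finally, for the relative product formula I would decompose $L^{2}(X_{1}\times_{Y}X_{2})=(A_{1}\oplus W_{1})\otimes_{Y}(A_{2}\oplus W_{2})$ using the splitting just proved for each factor, and then check via the joining criterion $\Theta_{(\cdot)}=0$ that any elementary tensor with at least one factor in some $W_{i}$ lies in $W(X_{1}\times_{Y}X_{2}\mid Y,T)$, while $A_{1}\otimes_{Y}A_{2}$ is visibly a closed span of finite-rank invariant submodules. Matching the two decompositions then gives $A(X_{1}\times_{Y}X_{2}\mid Y,T)=A_{1}\otimes_{Y}A_{2}$. The one genuinely new verification, namely that tensoring with a relatively weakly mixing function preserves weak mixing, is a routine consequence of the characterization $f\in W\iff\Theta_{f}=0$ together with Fubini over the fibers.
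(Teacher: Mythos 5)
You should know at the outset that the paper does not prove Theorem~\ref{thm:A+W} at all: it is declared folklore, with the proof deferred to \cite{robertson} and, for the final assertion, to \cite{MR0174705}. So the relevant comparison is with the standard Furstenberg--Zimmer / relative Jacobs--de Leeuw--Glicksberg argument that those references carry out, and your outline is precisely that argument. Your key steps check out: the identity $\|\E(hT^{g}f|Y)\|_{L^{2}(Y)}^{2}=\langle \tilde T^{g}(f\otimes\bar f),\bar h\otimes h\rangle_{L^{2}(\tilde\mu)}$ on the relative product, the mean ergodic theorem giving $\Theta_{f}$ and hence the ``some/every'' clause, the polarization argument for $f\in W\iff\Theta_{f}=0$, the inclusion $W\subseteq A^{\perp}$, positivity and $G$-equivariance of the kernel operator $K_{f}$ together with $\langle K_{f}f,f\rangle=\|\Theta_{f}\|_{L^{2}(\tilde\mu)}^{2}$, and the absorption property of $W$ under relative tensoring (your Fubini estimate $\|\E((h_{1}\otimes h_{2})\tilde T^{g}(u\otimes v)|Y)\|^{2}\leq\|h_{2}\|_{\infty}^{2}\|v\|_{\infty}^{2}\|\E(h_{1}T^{g}u|Y)\|^{2}$ does work, and extends to general $h$ by density).

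The one step that fails as written is the ``relative spectral decomposition'' of $K_{f}$ into scalar eigenmodules $\ker(K_{f}-\lambda)$, $\lambda>0$. That is correct for an honest compact self-adjoint operator (trivial $Y$), but false relatively: $K_{f}$ is only \emph{fiberwise} compact, its fiberwise eigenvalues are measurable $T$-invariant \emph{functions} on $Y$, and the induced action on $Y$ need not be ergodic --- indeed, in this paper $Y$ is typically $\IF{2}$ carrying the $T_{1}$-action, which is generally non-ergodic. If an eigenvalue function has non-atomic distribution (e.g.\ a fiberwise rank-one $K_{f}$ acting by multiplication by a non-constant invariant function $\lambda(y)$), then $\ker(K_{f}-\lambda)=0$ for \emph{every} scalar $\lambda>0$ even though $K_{f}\neq0$, and your argument concludes nothing. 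Since the whole inclusion $A^{\perp}\subseteq W$ hinges on $\overline{\mathrm{range}}(K_{f})\subseteq A$, this is a genuine gap, not a cosmetic one. The repair is standard: apply the functional calculus fiberwise and use the spectral projections $P_{\lambda}=1_{[\lambda,\infty)}(K_{f})$, which are $L^{\infty}(Y)$-linear and commute with $T$; the fiberwise rank of $P_{\lambda}$ is at most $\lambda^{-2}\|\Theta_{f}\|_{L^{2}(\mu_{y}\times\mu_{y})}^{2}$, a finite $T$-invariant function of $y$, so after stratifying $Y$ into the invariant sets where this rank is at most $j$ one obtains finite-rank $T$-invariant submodules whose closed span over $j$ and $\lambda\to0$ is $\overline{\mathrm{range}}(K_{f})$; then $K_{f}f\in A$ and your argument closes. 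The same phenomenon (ranks varying over $Y$) affects your inclusion $W\subseteq A^{\perp}$: a global conditionally orthonormal basis with $\E(e_{i}\bar e_{j}|Y)=\delta_{ij}$ exists only after the analogous invariant stratification, and the $e_{i}$ need not lie in $L^{\infty}(X)$, so one must truncate or extend the defining condition of $W$ to $h\in L^{2}(X)$ by density --- routine, since $f$ is bounded there. With these repairs your proof is correct for bounded $f$, and passing to all of $L^{2}(X)$ is the usual closure argument.
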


\subsection{Couplings of measure spaces}
We recall how to construct couplings of regular measure spaces.
\begin{lemma}
\label{lem:coupling}
Let $(X_{i},\mathcal{X}_{i},\mu_{i})_{i\in I}$ be (inner and outer) regular Borel probability spaces and let $\mu$ be a finitely additive positive function on the semiring of the sets of the form $\prod_{i\in I} A_{i}$, where $A_{i}\in\mathcal{X}_{i}$ and $A_{i}\neq X_{i}$ only for finitely many $i\in I$.
Suppose that $\mu(\prod_{i\in I} A_{i})\leq\min_{i}\mu_{i}(A_{i})$ for any $A_{i}$ as above.
Then $\mu$ admits a unique extension to a Borel probability measure on $(X,\mathcal{X}):=(\prod_{i\in I}X_{i},\bigotimes_{i\in I}\mathcal{X}_{i})$.
\end{lemma}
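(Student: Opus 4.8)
The plan is to reduce the assertion to the Carathéodory extension theorem. The sets $\prod_{i} A_i$ described in the statement form a semiring $\mathcal{S}$: the componentwise intersection of two such sets is again of this form, and their set-theoretic difference is a finite disjoint union of such sets, since only finitely many coordinates are non-trivial. Finite disjoint unions of members of $\mathcal{S}$ constitute the generated algebra $\mathcal{A}$, and by the usual semiring argument $\mu$ extends uniquely and finitely additively from $\mathcal{S}$ to $\mathcal{A}$. Since $\mathcal{S}$ is a $\pi$-system generating $\bigotimes_i \mathcal{X}_i$, uniqueness of the eventual measure is automatic from the $\pi$--$\lambda$ theorem, so the whole content lies in producing a countably additive extension. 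By Carathéodory it then suffices to show that $\mu$ is continuous at the empty set on $\mathcal{A}$: if $(E_n) \subset \mathcal{A}$ decreases to $\emptyset$, then $\mu(E_n) \to 0$.

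First I would record the key consequence of the domination hypothesis, namely that it upgrades inner regularity of the marginals $\mu_i$ to inner regularity of $\mu$. Given a generator $\prod_i A_i$ supported on a finite coordinate set $J$ and some $\eta>0$, inner regularity of each $\mu_i$ yields compact sets $K_i \subseteq A_i$ with $\mu_i(A_i \setminus K_i) < \eta$; replacing one $A_i$ at a time by $K_i$ changes $\mu$ only by the measure of a cylinder of the form $(\prod_{j} \cdots) \times (A_i \setminus K_i)$, which by the hypothesis $\mu(\cdot) \le \mu_i(A_i \setminus K_i)$ is at most $\eta$. Summing over the finitely many coordinate replacements, the compact-core cylinder $C = \prod_{i \in J} K_i \times \prod_{i \notin J} X_i$ satisfies $\mu\big((\prod_i A_i)\setminus C\big) \le |J|\,\eta$, and by additivity the same inner approximation holds for an arbitrary $E \in \mathcal{A}$. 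This step is exactly where the hypothesis $\mu(\prod_i A_i) \le \min_i \mu_i(A_i)$ is indispensable, since it is the only device available for bounding $\mu$ of a thin error cylinder by a controllable quantity.

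Suppose now, for contradiction, that $E_n \downarrow \emptyset$ but $\mu(E_n) \ge \delta > 0$ for all $n$. Using the previous paragraph I would choose, for each $n$, a finite union $C_n \subseteq E_n$ of compact-core cylinders with $\mu(E_n \setminus C_n) < \delta\, 2^{-n-1}$, and set $\tilde C_n := C_1 \cap \cdots \cap C_n$. Then $\mu(E_n \setminus \tilde C_n) \le \sum_{m \le n} \mu(E_m \setminus C_m) < \delta/2$, so $\mu(\tilde C_n) > \delta/2$ and in particular each $\tilde C_n$ is non-empty. The sets $\tilde C_n$ decrease, and each is supported on a finite coordinate set $J_n$, which I arrange to be increasing, with projection $D_n := \pi_{J_n}(\tilde C_n)$ a finite union of products of compact sets, hence a non-empty compact subset of $\prod_{i \in J_n} X_i$ satisfying $\tilde C_n = D_n \times \prod_{i \notin J_n} X_i$.

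The final step is to extract a point of $\bigcap_n \tilde C_n$, contradicting $\bigcap_n E_n = \emptyset$. This is the one genuinely non-formal point, and the only place where possible non-compactness of the individual factors $X_i$ matters, since $\prod_i X_i$ itself need not be compact. I would resolve it as in the proof of the Kolmogorov extension theorem: the projections $(\pi_{J_1}(\tilde C_n))_n$ form a decreasing sequence of non-empty compact subsets of the finite product $\prod_{i\in J_1} X_i$ and so have a common point; fixing such a point and passing successively to the coordinates in $J_2 \setminus J_1$, then $J_3 \setminus J_2$, and so on, one builds by a diagonal/finite-intersection argument a single thread $x \in \prod_i X_i$ whose projection to each $J_n$ lies in $D_n$. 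Since $\tilde C_n = D_n \times \prod_{i \notin J_n} X_i$, this $x$ lies in every $\tilde C_n \subseteq E_n$, the desired contradiction. This establishes continuity at $\emptyset$, and Carathéodory then delivers a countably additive extension on $\bigotimes_i \mathcal{X}_i$ of total mass $\mu(X)=1$, i.e.\ the required Borel probability measure.
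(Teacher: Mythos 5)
Your overall route --- extend $\mu$ to the algebra generated by the cylinder semiring, reduce countable additivity to continuity at $\emptyset$, and use the domination hypothesis $\mu(\prod_i A_i)\le\min_i\mu_i(A_i)$ to transfer inner regularity of the marginals to $\mu$ --- is sound and close in spirit to the paper's argument, which instead proves $\sigma$-subadditivity directly on the semiring by combining inner regularity (compact cores) with \emph{outer} regularity (open covers) and extracting a finite subcover. But your execution has one genuine gap, and it sits exactly at the step you yourself single out as the crux: the claim that $D_n=\pi_{J_n}(\tilde C_n)$ is ``a finite union of products of compact sets, hence compact''. The cylinders making up $C_m$ carry compact cores only on the supports of the cylinders of $E_m$, and these supports differ from term to term and are strictly smaller than $J_n$ in general. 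After distributing the intersection $\tilde C_n=C_1\cap\dots\cap C_n$, a typical term is a product which on some coordinates $i\in J_n$ has the full factor $X_i$ rather than a compact set; already $D_1$ can contain a slab of the form $K\times X_{i_0}$ with $X_{i_0}$ non-compact (take $E_1$ a disjoint union of two cylinders with different supports). So the sets $\pi_{J_m}(\tilde C_n)$ are in general only closed, not compact, and the finite-intersection-property step of your Kolmogorov-style threading breaks down. This failure occurs precisely in the non-compact situation your thread argument was designed to handle; if all $X_i$ are compact the problem disappears, but then so does the need for threading, since $\prod_i X_i$ is itself compact by Tychonoff.

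The gap is fixable with a device you already have. When you build $C_n$, shrink not only the non-trivial factors $A_i$ but also the \emph{full} factors $X_i$ for every $i$ in the (finitely many) coordinates of $J_n$, choosing compact $K_i\subset X_i$ with $\mu_i(X_i\setminus K_i)<\eta$; the hypothesis $\mu(\,\cdot\,)\le\mu_i(\,\cdot\,)$ bounds the measure of the cylinder over $X_i\setminus K_i$ just as before. Then every cylinder of $C_n$ has a compact core on all of $J_n$, hence so does every term of $\tilde C_n$ (an intersection of a compact set with closed sets is compact), $D_n$ really is compact, and your threading argument goes through verbatim. (Alternatively, one can quote the standard fact that finite unions of members of a compact class again form a compact class.) With this repair your proof is correct and is a genuine alternative to the paper's: it needs only inner regularity of the $\mu_i$, whereas the paper's covering argument also invokes outer regularity --- and, incidentally, the paper's own assertion that $\prod_i C_i$ is compact, even though $C_i=X_i$ on cofinitely many coordinates, silently assumes the $X_i$ compact, which is harmless there because its systems are modelled on compact metric spaces.
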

\begin{proof}
We will show that $\mu$ is in fact $\sigma$-additive.
To this end suppose that $\prod_{i} A_{i} = \uplus_{m\in\N} \prod_{i} A_{m,i}$ is a disjoint union with $A_{i},A_{m,i}\in\mathcal{X}_{i}$.
By finite additivity we have $\mu(\prod_{i} A_{i}) \geq \sum_{m\in\N} \mu(\prod_{i} A_{m,i})$.
In order to prove the converse inequality we use the regularity of the measures $\mu_{i}$.
Let $\delta>0$.
By inner regularity there exist compact subsets $C_{i}\subset A_{i}$ such that $C_{i}=X_{i}$ if $A_{i}=X_{i}$ and $\sum_{i\in I}\mu(C_{i})>\sum_{i}\mu(A_{i})-\delta$.
It follows that $\mu(\prod_{i} C_{i})>\mu(\prod_{i}A_{i})-\delta$.
By outer regularity there exist open subsets $U_{m,i}\supset A_{m,i}$ such that $\sum_{i}\mu(U_{m,i})<\sum_{i}\mu(A_{m,i})+\delta/2^{m}$.
It follows that $\mu(\prod_{i}U_{m,i})<\mu(\prod_{i}A_{m,i})+\delta/2^{m}$.

By construction the compact set $\prod_{i}C_{i}$ is covered by the open sets $\prod_{i}U_{m,i}$.
Hence there exists a finite subset $M\subset\N$ such that the corresponding open sets cover the whole compact set.
By additivity of $\mu$ this implies
\begin{multline*}
\mu(\prod_{i}A_{i})
< \mu(\prod_{i} C_{i}) + \delta
\leq \sum_{m\in M} \mu(\prod_{i} U_{m,i}) + \delta
\leq \sum_{m\in\N} \mu(\prod_{i} U_{m,i}) + \delta\\
\leq \sum_{m\in\N} (\mu(\prod_{i} A_{m,i}) + \delta/2^{m}) + \delta
= \sum_{m\in\N} \mu(\prod_{i} A_{m,i}) + 2\delta.
\end{multline*}
Since $\delta$ was arbitrary, this shows that $\mu$ is $\sigma$-subadditive.
By the Carath\'{e}odory theorem the function $\mu$ has a unique extension to a probability measure on the space $(X,\mathcal{X})$.
\end{proof}

\section{Sated systems and cubic averages}
We will use notation and vocabulary from Austin's thesis \cite{MR2941687}.
A subclass of $\MPS$ is called \emph{idempotent} if it contains the trivial system and is closed under measure-theoretic isomorphisms, inverse limits, and joinings.
\begin{lemma}[{\cite[Definition 2.2.3]{MR2941687}}]
\label{lem:max-I-factor}
Let $\Inv$ be an idempotent subclass of $\MPS$.
Then we have a functor on $\MPS$, which we denote by the same symbol $\Inv$, such that for each object $X$ of $\MPS$ the object $\Inv X$ is the maximal factor of $X$ contained in $\Inv$ (in particular, such maximal factor exists).
\end{lemma}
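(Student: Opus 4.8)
The plan is to realize factors of an object $X=(X,\mathcal{X},\mu,T_{1},\dots,T_{k})$ of $\MPS$ as $G$-invariant complete sub-$\sigma$-algebras of $\mathcal{X}$, and to produce $\Inv X$ as the join of all those sub-$\sigma$-algebras whose associated factor lies in $\Inv$. To every factor map $\pi:X\to Y$ one associates the sub-$\sigma$-algebra $\pi^{-1}\mathcal{Y}\subseteq\mathcal{X}$, which is invariant under $T_{1},\dots,T_{k}$; conversely every $T_{i}$-invariant complete sub-$\sigma$-algebra $\mathcal{W}\subseteq\mathcal{X}$ gives rise to a factor $(X,\mathcal{W},\mu,T_{1},\dots,T_{k})$ (realized topologically by the remark following the definition of $\MPS$), and two factors are isomorphic over $X$ precisely when they determine the same sub-$\sigma$-algebra mod $\mu$. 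Under this dictionary a refinement $\mathcal{W}\subseteq\mathcal{W}'$ corresponds to a factor map of the associated systems, so the factors of $X$ form a lattice. Let $\mathcal{F}$ be the family of those invariant sub-$\sigma$-algebras whose factor belongs to $\Inv$; it is nonempty because the trivial system lies in $\Inv$, and I define $\Inv X$ to be the factor attached to $\mathcal{Y}_{\infty}:=\bigvee\mathcal{F}$.

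First I would check that $\mathcal{F}$ is closed under finite joins. Given $\mathcal{W}_{1},\mathcal{W}_{2}\in\mathcal{F}$ with factors $Y_{1},Y_{2}$, the factor attached to $\mathcal{W}_{1}\vee\mathcal{W}_{2}$ maps onto both $Y_{1}$ and $Y_{2}$ and is generated by them, so pushing $\mu$ forward identifies it, up to isomorphism over $X$, with $(Y_{1}\times Y_{2},\lambda)$ for a joining $\lambda$ of $Y_{1}$ and $Y_{2}$. Since $Y_{1},Y_{2}\in\Inv$ and $\Inv$ is closed under joinings and isomorphisms, we get $\mathcal{W}_{1}\vee\mathcal{W}_{2}\in\mathcal{F}$. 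To pass from finite to arbitrary joins I use separability of $L^{2}(X)$: the closed subspace $L^{2}(\mathcal{Y}_{\infty})$ is spanned by $\bigcup_{\mathcal{W}\in\mathcal{F}}L^{2}(\mathcal{W})$ and is separable, so there is a countable subfamily $\mathcal{W}^{(1)},\mathcal{W}^{(2)},\dots\in\mathcal{F}$ with $\bigvee_{m}\mathcal{W}^{(m)}=\mathcal{Y}_{\infty}$. Setting $\mathcal{Z}_{n}:=\mathcal{W}^{(1)}\vee\dots\vee\mathcal{W}^{(n)}$ yields an increasing sequence in $\mathcal{F}$ whose inverse limit is the factor attached to $\mathcal{Y}_{\infty}$; as $\Inv$ is closed under inverse limits, $\Inv X\in\Inv$. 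By construction $\mathcal{Y}_{\infty}$ contains every member of $\mathcal{F}$, so $\Inv X$ is the maximal $\Inv$-factor of $X$, which simultaneously gives its uniqueness up to isomorphism over $X$.

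It remains to make $\Inv$ a functor. For a morphism $\phi:X\to X'$, composing with the canonical projection $X'\to\Inv X'$ exhibits $\Inv X'$ as a factor of $X$ lying in $\Inv$; by maximality of $\Inv X$ the corresponding sub-$\sigma$-algebra is contained in $\mathcal{Y}_{\infty}$, and the resulting inclusion defines a factor map $\Inv\phi:\Inv X\to\Inv X'$ satisfying $(\Inv\phi)\circ p_{X}=p_{X'}\circ\phi$, where $p_{X},p_{X'}$ denote the canonical projections. This is the unique morphism making that square commute, since a factor map out of $\Inv X$ is determined by the sub-$\sigma$-algebra it pulls back; uniqueness immediately gives $\Inv\,\id_{X}=\id_{\Inv X}$ and $\Inv(\psi\circ\phi)=\Inv\psi\circ\Inv\phi$, so $\Inv$ is functorial.

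I expect the main obstacle to be the second step, namely keeping the join of all $\Inv$-factors inside $\Inv$: the defining closure properties of an idempotent class speak only of binary joinings and of inverse limits, so the argument must first reduce the a priori uncountable supremum to a countable increasing one (using separability) and then interleave the finite-join closure with the inverse-limit closure. The remaining points — the factor/sub-$\sigma$-algebra dictionary and the verification of the functor axioms — are formal once maximality is established.
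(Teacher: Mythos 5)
Your proposal is correct, and it is essentially the argument behind the cited source: the paper itself offers no proof of this lemma, quoting it from Austin's thesis \cite[Definition 2.2.3]{MR2941687}, where the maximal $\Inv$-factor is built exactly as you do --- join all $\Inv$-factors, use closure under joinings to make the family of corresponding invariant sub-$\sigma$-algebras directed, reduce the join to a countable increasing one by separability of $L^{2}(X)$, and invoke closure under inverse limits; functoriality then follows from maximality as you describe. Your ordering of the steps (finite-join closure before the separability reduction, which is what legitimizes spanning $L^{2}(\mathcal{Y}_{\infty})$ by the directed union) is exactly the point that needs care, and you handle it correctly.
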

Let $(X,\mathcal{X},\mu)$ be a measure space and $\mathcal{B}\subset\mathcal{X}$ be a sub-$\sigma$-algebra.
Two sub-$\sigma$-algebras $\mathcal{B}\subset\mathcal{B}_{1},\mathcal{B}_{2}\subset\mathcal{X}$ are called \emph{relatively independent over $\mathcal{B}$} if for every $f\in L^{2}(\mathcal{B}_{1})$ such that $f\perp \mathcal{B}$ we have $f\perp\mathcal{B}_{2}$.
The notion of relative independence is in fact symmetric in $\mathcal{B}_{1}$ and $\mathcal{B}_{2}$; we refer to \cite[Appendix]{MR2373263} for an exposition of several further characterizations of relative independence.

Suppose that $\mathcal{C}$ is a subclass of $\MPS$ and $\Inv$ is an idempotent class.
A system $X$ in $\mathcal{C}$ is called \emph{$\Inv$-sated in $\mathcal{C}$} if for every extension $X'\to X$ with $X'$ in $\mathcal{C}$ the factors $X$ and $\Inv X'$ are relatively independent over $\Inv X$.
Note that Austin considers satedness only with $\mathcal{C} = \MPS$; we will have to work with a subclass in order to preserve ergodicity.
Recall that the \emph{inverse limit} of a sequence $(X_{i\in\N})$ in $\MPS$ with factor maps $\pi_{i+1,i}:X_{i+1}\to X_{i}$ is a system $X$ in $\MPS$ with factor maps $\pi_{i}:X\to X_{i}$ such that $\pi_{i+1,i}\circ\pi_{i+1}=\pi_{i}$ and $\mathcal{X} = \vee_{i\in\N} \pi_{i}\inv(\mathcal{X}_{i})$.
The next result is a variant of \cite[Theorem 2.3.2]{MR2941687}.
\begin{theorem}
\label{thm:sated-ext}
Suppose that $\mathcal{C}$ is a subclass of $\MPS$ that is closed under inverse limits of sequences and $\Inv$ is an idempotent subclass of $\MPS$.
Then for every $X$ in $\mathcal{C}$ there exists an extension $X'$ that is $\Inv$-sated in $\mathcal{C}$.
\end{theorem}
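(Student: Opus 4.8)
The plan is to build an inverse-limit tower over $X$ that progressively exhausts the satedness defect, following \cite[Theorem 2.3.2]{MR2941687} but keeping every extension inside $\mathcal{C}$. For a system $Z$ in $\mathcal{C}$ and $f\in L^{2}(Z)$ introduce
\[
S_{Z}(f) := \sup \big\{ \|\E(f \mid \Inv Z')\|^{2} : Z' \to Z \text{ an extension in } \mathcal{C} \big\},
\]
which is a well-defined real number lying in the bounded interval $[\,\|\E(f\mid\Inv Z)\|^{2},\|f\|^{2}\,]$. By Lemma~\ref{lem:max-I-factor} the assignment $\Inv$ is functorial, so every extension $Z'\to Z$ induces a factor map $\Inv Z'\to \Inv Z$; hence $\Inv Z$ is a subfactor of $\Inv Z'$, and for $f\in L^{2}(Z)$ the norm $\|\E(f\mid\Inv Z')\|^{2}$ can only increase while $S_{Z'}(f)$ can only decrease as one passes to further extensions. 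The system $Z$ is $\Inv$-sated in $\mathcal{C}$ precisely when $S_{Z}(f) = \|\E(f\mid\Inv Z)\|^{2}$ for every $f$.

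First I would set up the bookkeeping. Since each $L^{2}(X_{n})$ is separable, fix for every $n$ a countable dense set $(f_{n,j})_{j}\subset L^{2}(X_{n})$, a schedule $n\mapsto(a_{n},b_{n})$ visiting each pair $(a,b)$ with $a\le n$ infinitely often, and errors $\epsilon_{n}\to 0$. Starting from $X_{0}:=X$, at stage $n$ I would consider $f_{a_{n},b_{n}}$ lifted to $L^{2}(X_{n})$ and, using the definition of the supremum, choose an extension $X_{n+1}\to X_{n}$ in $\mathcal{C}$ with
\[
\|\E(f_{a_{n},b_{n}} \mid \Inv X_{n+1})\|^{2} > S_{X_{n}}(f_{a_{n},b_{n}}) - \epsilon_{n}.
\]
Because $\mathcal{C}$ is closed under inverse limits of sequences, the tower $X_{0}\leftarrow X_{1}\leftarrow\cdots$ has an inverse limit $X_{\infty}:=\varprojlim X_{n}$ in $\mathcal{C}$, with $L^{2}(X_{\infty})=\overline{\bigcup_{n}L^{2}(X_{n})}$ and with $\bigcup_{n,j}\{f_{n,j}\}$ dense in it.

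The heart of the argument is a squeeze. Fix a scheduled $f=f_{a,b}$ and write $\alpha_{n}:=\|\E(f\mid\Inv X_{n})\|^{2}$ and $S_{n}:=S_{X_{n}}(f)$. By the monotonicity noted above, $(\alpha_{n})$ is nondecreasing and $(S_{n})$ nonincreasing, with $\alpha_{m}\le S_{n}$ whenever $m\ge n$; both converge and $\lim_{n}\alpha_{n}\le\lim_{n}S_{n}$. At each stage $n$ where $f$ is scheduled we arranged $\alpha_{n+1}>S_{n}-\epsilon_{n}\ge(\lim_{m}S_{m})-\epsilon_{n}$, so since $f$ is scheduled infinitely often with $\epsilon_{n}\to 0$ we obtain $\lim_{n}\alpha_{n}=\lim_{n}S_{n}=:S_{\infty}(f)$. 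Writing $Y:=\varprojlim\Inv X_{n}$ for the inverse limit of the maximal $\Inv$-factors, the martingale convergence theorem gives $\alpha_{n}\to\|\E(f\mid Y)\|^{2}$, whence $\|\E(f\mid Y)\|^{2}=S_{\infty}(f)$. As $\Inv$ is idempotent, hence closed under inverse limits, $Y$ is a factor of $X_{\infty}$ lying in $\Inv$, so $Y\subseteq\Inv X_{\infty}$.

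It remains to verify satedness, and this is the step I expect to be the main obstacle. Let $X''\to X_{\infty}$ be any extension in $\mathcal{C}$ and let $f=f_{a,b}$, viewed in $L^{2}(X_{a})\subset L^{2}(X_{\infty})$. Since $X''$ also extends every $X_{a'}$ with $a'\ge a$, we get $\|\E(f\mid\Inv X'')\|^{2}\le S_{a'}(f)$, and taking the infimum along the tower yields $\|\E(f\mid\Inv X'')\|^{2}\le S_{\infty}(f)=\|\E(f\mid Y)\|^{2}$. On the other hand the chain of factors $Y\subseteq\Inv X_{\infty}\subseteq\Inv X''$ gives the reverse inequalities $\|\E(f\mid Y)\|^{2}\le\|\E(f\mid\Inv X_{\infty})\|^{2}\le\|\E(f\mid\Inv X'')\|^{2}$, so all these norms coincide. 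In particular $\|\E(f\mid\Inv X'')\|^{2}=\|\E(f\mid\Inv X_{\infty})\|^{2}$ together with $\Inv X_{\infty}\subseteq\Inv X''$ forces $\E(f\mid\Inv X'')=\E(f\mid\Inv X_{\infty})$; since conditional expectation is a contraction and the scheduled functions are dense, the same identity holds for all $f\in L^{2}(X_{\infty})$, which is exactly relative independence of $X_{\infty}$ and $\Inv X''$ over $\Inv X_{\infty}$. The delicate point is that $\Inv X_{\infty}$ may a priori be strictly larger than $\bigvee_{n}\Inv X_{n}$; this is circumvented by running the comparison through the chain $Y\subseteq\Inv X_{\infty}\subseteq\Inv X''$ rather than identifying $Y$ with $\Inv X_{\infty}$ beforehand, with equality of the two emerging as a by-product.
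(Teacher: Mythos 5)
Your proposal is correct and takes essentially the same route as the paper: the paper offers no proof of this theorem, referring instead to Austin's thesis (Theorem 2.3.2 there), whose argument is precisely your tower construction with a countable schedule, an inverse limit, and the squeeze $\lim_n \alpha_n = \lim_n S_n$. The two points requiring care --- keeping every extension inside $\mathcal{C}$ (which is exactly the ``variant'' aspect of the paper's statement relative to Austin's) and running the comparison through the chain $Y \subseteq \Inv X_\infty \subseteq \Inv X''$ rather than presuming $\Inv X_\infty = \bigvee_n \Inv X_n$ --- are both handled correctly in your write-up.
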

Here and later denote by $\IF{i}$ the $\sigma$-algebra of $T_{i}$-invariant sets.
We will use Theorem~\ref{thm:sated-ext} with the subclass $\Inv$ of $\MPS$ consisting of systems $X$ such that $\mathcal{X} = \IF{1}\vee\dots\vee \IF{k}$ and either with $\mathcal{C}=\MPS$ or with the subclass $\eMPS$ of jointly ergodic tuples of measure-preserving actions.
It is clear that the class $\Inv$ is idempotent.

The starting point of our investigation is a weak convergence result for cubic averages of dimension $2$.
For brevity we will write $T_{\epsilon}^{g}=\prod_{i\in\epsilon}T_{i}^{g}$ and $T_{\epsilon}^{\vec g}=\prod_{i\in\epsilon}T_{i}^{g_{i}}$ for $\epsilon\subseteq\{1,\dots,k\}$ and $g\in G$, $\vec g = (g_{1},\dots,g_{k}) \in G^{k}$.
Thus for instance $T_{\{1,2\}}^{g}=T_{1}^{g}T_{2}^{g}$ and $T_{\{1,2\}}^{\vec g} = T_{1}^{g_{1}}T_{2}^{g_{2}}$.
We will omit braces from subscripts if no confusion is possible, for example $T_{\{1,2\}}^{g}=T_{1,2}^{g}$.
\begin{lemma}
\label{lem:k=2-cube-measure}
Suppose that $X\in\MPS[2]$.
Then for any $f_{\epsilon}\in L^{\infty}(X)$, $\epsilon\subseteq\{1,2\}$, and any left \reiter{} sequences $\Phi,\Psi$ the limit
\begin{equation}
\label{eq:mu-square}
\lim_{n} \iiint \prod_{\epsilon} T_{\epsilon}^{\vec g} f_{\epsilon} \dif\mu \dif\Phi_{n}(g_{1}) \dif\Psi_{n}(g_{2})
\end{equation}
exists, and in particular it does not depend on $\Phi,\Psi$.
If in addition $X$ is $\Inv[2]$-sated, then the limit vanishes provided that $f_{1,2} \perp \Inv[2]X$.
\end{lemma}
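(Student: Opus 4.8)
The plan is to compute the limit by the method of characteristic factors: reduce \eqref{eq:mu-square} to one-dimensional ergodic averages on a relative product, and then use the almost periodic--weak mixing dichotomy to cope with the fact that $\Phi_{n}$ and $\Psi_{n}$ carry the \emph{same} index $n$. First I would exploit commutativity, writing the integrand of \eqref{eq:mu-square} as
\[
\int_{X} \big(f_{\emptyset}\cdot T_{1}^{g_{1}}f_{\{1\}}\big)\cdot T_{2}^{g_{2}}\big(f_{\{2\}}\cdot T_{1}^{g_{1}}f_{\{1,2\}}\big)\dif\mu .
\]
Carrying out the $g_{2}$-average first and invoking the mean ergodic theorem along $\Psi$, the operator $\int_{g_{2}}T_{2}^{g_{2}}(\cdot)\dif\Psi_{n}$ converges strongly to $\E(\cdot\mid\IF{2})$; pulling this expectation onto both factors turns the inner average into an integral over the relative independent product $X\times_{\IF{2}}X$ (well defined by Lemma~\ref{lem:coupling}), on which $T_{1}\times T_{1}$ acts measure-preservingly because $T_{1}$ commutes with $T_{2}$ and hence preserves $\IF{2}$. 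The remaining $g_{1}$-average is an ordinary one-dimensional ergodic average for $T_{1}\times T_{1}$, which converges to
\[
L := \int_{X\times_{\IF{2}}X}(f_{\emptyset}\otimes f_{\{2\}})\cdot\E\big(f_{\{1\}}\otimes f_{\{1,2\}}\,\big|\,\mathcal{J}\big)\dif(\mu\times_{\IF{2}}\mu),
\]
where $\mathcal{J}$ is the $\sigma$-algebra of $T_{1}\times T_{1}$-invariant sets. Since each mean ergodic theorem produces a uniform \cesaro{} limit, $L$ does not depend on $\Phi$ or $\Psi$; it is the natural candidate for \eqref{eq:mu-square}.

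The main obstacle is that \eqref{eq:mu-square} is a single limit, so I may not simply iterate the two limits above: the mean ergodic convergence of the $g_{2}$-average is not uniform in $g_{1}$. To overcome this I would split $f_{\{1,2\}}=f_{a}+f_{w}$ according to Theorem~\ref{thm:A+W} for the $T_{1}$-action over the factor $\IF{2}$, with $f_{a}\in A(X\mid\IF{2},T_{1})$ and $f_{w}\in W(X\mid\IF{2},T_{1})$, and argue multilinearly. On the weakly mixing part the defining property $\int_{g_{1}}\|\E(f_{\{2\}}\,T_{1}^{g_{1}}f_{w}\mid\IF{2})\|^{2}\dif\Phi_{n}(g_{1})\to 0$, together with the van der Corput estimate of Corollary~\ref{cor:vdC} applied in the joint variable $(g_{1},g_{2})\in G\times G$ (using the two-sided \reiter{} sequence it furnishes on $G\times G$), shows that the corresponding contribution to \eqref{eq:mu-square} tends to $0$, matching the vanishing of the $f_{w}$-term in $L$. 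On the almost periodic part relative almost periodicity confines the orbit $\{T_{1}^{g_{1}}f_{a}\}$ to a finite-rank $\IF{2}$-module; this relative compactness supplies exactly the uniformity over $g_{1}$ needed to interchange the two averages, so the simultaneous limit agrees with the iterated limit $L$. This yields existence and $\Phi,\Psi$-independence.

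For the final assertion I would iterate the van der Corput estimate in both directions. Bounding the modulus of \eqref{eq:mu-square} by $\|f_{\emptyset}\|_{2}$ times the norm of the vector-valued average and applying Corollary~\ref{cor:vdC} successively in the two coordinate directions of $G\times G$ peels off $f_{\{1\}}$ and $f_{\{2\}}$ and reduces the $\limsup$ of that norm to a box average of $f_{\{1,2\}}$ over differences in the $T_{1}$- and $T_{2}$-directions; by the mean ergodic theorem this box average is controlled by $\E(f_{\{1,2\}}\mid\IF{1}\vee\IF{2})$. The hypothesis that $X$ is $\Inv[2]$-sated is precisely what guarantees that this characteristic factor is already realised inside $X$ as $\Inv[2]X=\IF{1}\vee\IF{2}$, rather than only in some extension, so $f_{\{1,2\}}\perp\Inv[2]X$ forces the box average, and hence the limit, to vanish. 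The step I expect to be most delicate is making the almost periodic--weak mixing reduction interact correctly with the simultaneous averaging, since both group variables must be controlled at once.
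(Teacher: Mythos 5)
Your strategy is genuinely different from the paper's, but it has a fatal gap precisely at the heart of the lemma, namely in the satedness assertion. You use satedness only through the claim that it ``guarantees $\Inv[2]X=\IF{1}\vee\IF{2}$''. That identity, however, holds for \emph{every} system in $\MPS[2]$: the factor of $X$ generated by $\IF{1}\vee\IF{2}$ lies in $\Inv[2]$, and conversely any factor lying in $\Inv[2]$ is generated by invariant sets of $X$ and hence contained in $\IF{1}\vee\IF{2}$. So, as you use it, satedness has no content. Correspondingly, your key analytic claim --- that the box average of $f_{1,2}$ produced by van der Corput is controlled by $\E(f_{1,2}|\IF{1}\vee\IF{2})$ ``by the mean ergodic theorem'' --- is false. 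Evaluating that box average by the mean ergodic theorem yields averages of $\|\E(f_{1,2}T_{1}^{h_{1}}\bar f_{1,2}|\IF{2})\|_{2}^{2}$, whose vanishing characterizes $f_{1,2}\perp A(X|\IF{2},T_{1})$, and $A(X|\IF{2},T_{1})$ strictly contains $\IF{1}\vee\IF{2}$ in general. Concretely, take $G=\Z$, $T_{1}=T_{2}$ an irrational rotation, $f_{\emptyset}=f_{1,2}=e_{-1}$ and $f_{1}=f_{2}=e_{1}$ where $e_{k}(x):=e^{2\pi ikx}$: then $f_{1,2}\perp\IF{1}\vee\IF{2}$ (which is trivial), yet the integrand in \eqref{eq:mu-square} is identically $1$. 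Since your argument invokes satedness only in the vacuous form above, it would apply verbatim to this (non-sated) system and prove a false statement. The equality $A(X|\IF{2},T_{1})=\IF{1}\vee\IF{2}$ is exactly the magic property, i.e.\ Proposition~\ref{prop:KI-in-sated}, which the paper deduces \emph{from} this lemma, so you cannot cite it here either. What satedness actually provides is a statement about \emph{extensions}: $f_{1,2}\perp\IF{1}\vee\IF{2}$ on $X$ forces $f_{1,2}\circ\pi\perp\IF{1}^{X'}\vee\IF{2}^{X'}$ for every extension $\pi\colon X'\to X$. The paper's proof consists in manufacturing an extension that controls \eqref{eq:mu-square}: pass to a subsequence along which the limits defining $\mu^{\square}$ on products of sets exist, extend $\mu^{\square}$ to a measure on $X^{4}$ by Lemma~\ref{lem:coupling}, check that $(X^{4},\mu^{\square},T_{\square,1},T_{\square,2})$ is an extension of $X$, note that $f_{\epsilon}\circ\pi_{\epsilon}$ is $\IF{\square,i}$-measurable for $\epsilon\subsetneq\{1,2\}$ and $i\notin\epsilon$, apply satedness on this extension to kill the integral, and conclude by a subsubsequence argument. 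None of this appears in your proposal.

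The existence half also has a gap, though a more technical one. In your weak-mixing step, Corollary~\ref{cor:vdC} leaves the inner limit $\limsup_{m}\int\langle u_{\vec h\vec g},u_{\vec l\vec g}\rangle\,\dif(\Phi_{m}\times\Psi_{m})(\vec g)$ to be computed; after the usual shift by $T_{1}^{(h_{1}g_{1})\inv}T_{2}^{(h_{2}g_{2})\inv}$ this requires mean ergodic theorems for averages of the form $\int T_{i}^{g\inv}(\cdot)\,\dif\Phi_{m}(g)$, i.e.\ along the reflected sequences $\Phi_{m}^{*},\Psi_{m}^{*}$, which are right \reiter{} but in general not left. For such one-sided averages only weak convergence to the conditional expectation is guaranteed, and weak convergence does not allow passing to the limit in a product of two such averages. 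This is exactly the two-sidedness obstruction that the paper removes \emph{using} the present lemma (see the proof of Lemma~\ref{lem:k=2-char-factors}, where Lemma~\ref{lem:k=2-cube-measure} is what justifies replacing $\Phi,\Psi$ by two-sided sequences), so appealing to that mechanism here is circular. Your almost periodic step (decoupling through finite-rank $\IF{2}$-modules) is sound in spirit, but it cannot carry the proof alone. By contrast, the paper's case split --- $f_{1,2}\perp\Inv[2]X$ handled by the coupling-plus-satedness argument, and $f_{1,2}$ measurable with respect to $\IF{1}\vee\IF{2}$ handled by a direct computation involving only forward averages --- is designed precisely to avoid inverse averages, which is why it works for arbitrary one-sided \reiter{} sequences.
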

\begin{proof}
Suppose first that $X$ is $\Inv[2]$-sated and recall that we assume the underlying measure space $(X,\mathcal{X},\mu)$ to be regular.
Let $B$ be a countable dense subalgebra of $\mathcal{X}$ and pick a subsequence of $\Phi\times\Psi$, which we denote by the same symbol, such that
\[
\mu^{\square}(\prod_{\epsilon} A_{\epsilon})
:= \lim_{n} \iiint \prod_{\epsilon} T_{\epsilon}^{\vec g} 1_{A_{\epsilon}} \dif\mu \dif\Phi_{n}(g_{1}) \dif\Psi_{n}(g_{2})
\]
exists for any $A_{\epsilon}\in B$.
The limit on the right-hand side of the above display is bounded by $\min_{\epsilon}\mu(A_{\epsilon})$, and it follows that $\mu^{\square}$ extends to a function on the semiring of sets of the form $\prod_{\epsilon} A_{\epsilon}$, $A_{\epsilon}\in\mathcal{X}$, by the same formula.
It is easy to see that $\mu^{\square}$ is finitely additive.

By Lemma~\ref{lem:coupling} the function $\mu^{\square}$ has a unique extension to a probability measure on $X^{4}$.
Since $\mu^{\square}$ is clearly invariant under the side transformations $T_{\square,1}$ and $T_{\square,2}$ given by
\[
T_{\square,i} = \times_{\epsilon\subseteq\{1,2\}} T_{\epsilon \cap \{i\}},
\]
the uniqueness implies that the extension, which we again denote by the symbol $\mu^{\square}$, is invariant with respect to these transformations.
Thus $(X^{4},\mathcal{X}^{\otimes 4},\mu^{\square},T_{\square,1},T_{\square,2})$ is an extension of $X$ under the projection $\pi_{1,2}$.
Moreover, $\int \otimes_{\epsilon} f_{\epsilon} \dif\mu^{\square}$ is given by the formula \eqref{eq:mu-square}.

Suppose now that $f_{1,2} \perp \Inv[2]X$.
By the satedness assumption the function $f_{1,2}\circ\pi_{1,2}$ on the cube extension is orthogonal to $\IF{\square,1}\vee\IF{\square,2}$ under $\mu^{\square}$.
On the over hand, if $\epsilon\subsetneq\{1,2\}$, then $f_{\epsilon}\circ \pi_{\epsilon}$ is $\IF{\square,i}$-measurable for any $i\not\in\epsilon$, so that
\[
0 = \int \prod_{\epsilon} f_{\epsilon}\circ\pi_{\epsilon} \dif\mu^{\square}
= \lim_{n} \iiint \prod_{\epsilon} T_{\epsilon}^{\vec g} f_{\epsilon} \dif\mu \dif\Phi_{n}(g_{1}) \dif\Psi_{n}(g_{2}).
\]
Since this limit does not depend on the subsequence of $\Phi\times\Psi$ that was chosen at the beginning, a subsubsequence argument shows that this limit in fact exists and vanishes for the original \reiter{} sequences.

On the other hand, in the case that $f_{1,2}$ is $\Inv[2]X$-measurable by density and linearity it suffices to consider $f_{1,2}=h_{1}h_{2}$, where each $h_{i}$ is $T_{i}$-invariant.
In this case we obtain
\begin{multline*}
\lim_{n} \iiint \prod_{\epsilon} T_{\epsilon}^{\vec g} f_{\epsilon} \dif\mu \dif\Phi_{n}(g_{1}) \dif\Psi_{n}(g_{2})\\
=
\lim_{n} \iiint T_{1}^{g_{1}} (f_{1}h_{2}) T_{2}^{g_{2}}(f_{2}h_{1}) f_{\emptyset} \dif\mu \dif\Phi_{n}(g_{1}) \dif\Psi_{n}(g_{2})\\
=
\int \E(f_{1}h_{2}|\IF{1}) \E(f_{2}h_{1}|\IF{2}) f_{\emptyset} \dif\mu
\end{multline*}
by the mean ergodic theorem.
This limit is manifestly independent of the \reiter{} sequences.

In the general case of a not necessarily $\Inv[2]$-sated system $X$ we use Theorem~\ref{thm:sated-ext} to pass to a sated extension and note that the existence of the limit \eqref{eq:mu-square} for functions on this extension implies the existence of that limit for functions on $X$.
\end{proof}

Thus we obtain a measure $\mu^{\square}$ on $X^{4}$ and two measure-preserving $G$-actions on $(X^{4},\mu^{\square})$ such that the resulting measure-preserving system is an extension of $X$.
This explicitly constructed extension allows us to exploit satedness.
Specifically, we aim at obtaining systems with the following property.
\begin{definition}
We call a system $X\in\MPS[2]$ \emph{magic} if $A(X|\IF{2}, T_{1}) = \IF{1} \vee \IF{2}$ (recall that $A(X|\IF{2}, T_{1})$ was defined in Theorem~\ref{thm:A+W}).
\end{definition}
An equivalent notion has been first introduced by Host \cite{MR2539560} for commutative $G$ (in fact he introduced a corresponding notion for $k$-tuples of commuting $\Z$-actions for every $k\in\N$).
The next proposition is our main tool for exploiting information about characteristic factors.
\begin{proposition}
\label{prop:KI-in-sated}
Suppose that $X\in\MPS[2]$ is $\Inv[2]$-sated.
Then $X$ is magic.
\end{proposition}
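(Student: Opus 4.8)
The plan is to prove the two inclusions separately. The inclusion $\IF{1}\vee\IF{2}\subseteq A(X\mid\IF{2},T_{1})$ is routine: by the footnote to Theorem~\ref{thm:A+W} every $\IF{2}$-measurable and every $T_{1}$-invariant (hence every $\IF{1}$-measurable) function lies in $A(X\mid\IF{2},T_{1})$, and since the bounded part of $A(X\mid\IF{2},T_{1})$ is an algebra and $A(X\mid\IF{2},T_{1})$ is closed, it contains the closed span of the products $h_{1}h_{2}$ with $h_{i}\in L^{\infty}(\IF{i})$, namely $L^{2}(\IF{1}\vee\IF{2})$. For the reverse inclusion, after subtracting $\E(f\mid\IF{1}\vee\IF{2})$ (which already lies in $A(X\mid\IF{2},T_{1})$) it suffices to take $f\in A(X\mid\IF{2},T_{1})$ with $f\perp\IF{1}\vee\IF{2}$ and prove $f=0$; since $A$ and $W$ are orthogonal complements (Theorem~\ref{thm:A+W}), it is in turn enough to show $f\in W(X\mid\IF{2},T_{1})$. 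I would detect relative weak mixing on the relatively independent self-joining $(X\times_{\IF{2}}X,\mu\times_{\IF{2}}\mu)$ equipped with $S_{1}=T_{1}\times T_{1}$ and its invariant $\sigma$-algebra $\mathcal{J}$: expanding the square and applying the mean ergodic theorem for $S_{1}$ yields
\[
\Clim_{g}\|\E(hT_{1}^{g}f\mid\IF{2})\|^{2}=\<h\otimes\bar h,\,\E(\bar f\otimes f\mid\mathcal{J})\>_{\mu\times_{\IF{2}}\mu}\qquad(h\in L^{\infty}(X)),
\]
so it suffices to prove that $Q(f):=\|\E(\bar f\otimes f\mid\mathcal{J})\|^{2}$ vanishes.

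The crucial observation is that this self-joining is a marginal of the cube measure $\mu^{\square}$ constructed in Lemma~\ref{lem:k=2-cube-measure}. Indeed the projection $P=(\pi_{1},\pi_{1,2})$ pushes $\mu^{\square}$ forward to $\mu\times_{\IF{2}}\mu$---one computes $\int(f_{1}\circ\pi_{1})(f_{1,2}\circ\pi_{1,2})\dif\mu^{\square}=\int\E(f_{1}\mid\IF{2})\E(f_{1,2}\mid\IF{2})\dif\mu$, the variable $g_{1}$ dropping out by $T_{1}$-invariance of $\mu$---and $P$ intertwines $T_{\square,1}$ with $S_{1}$, so $\mathcal{J}$ pulls back inside $\IF{\square,1}$. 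Setting $\Theta:=\E(\bar f\otimes f\mid\mathcal{J})\circ P$, which is therefore $\IF{\square,1}$-measurable, and using that $P$ preserves inner products, I would rewrite
\[
Q(f)=\int(\bar f\circ\pi_{1})(f\circ\pi_{1,2})\,\overline{\Theta}\dif\mu^{\square}=\int(f\circ\pi_{1,2})\cdot\big[(\bar f\circ\pi_{1})\overline{\Theta}\big]\dif\mu^{\square}.
\]

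Now $\bar f\circ\pi_{1}$ is $\IF{\square,2}$-measurable, since $T_{\square,2}=\times_{\epsilon}T_{\epsilon\cap\{2\}}$ acts trivially on the coordinate $\pi_{1}$ (because $\{1\}\cap\{2\}=\emptyset$); hence the bracketed factor $(\bar f\circ\pi_{1})\overline{\Theta}$ is $(\IF{\square,1}\vee\IF{\square,2})$-measurable. On the other hand, satedness yields---exactly as in the proof of Lemma~\ref{lem:k=2-cube-measure}---that $f\circ\pi_{1,2}\perp\IF{\square,1}\vee\IF{\square,2}$, because $f\perp\Inv[2]X=\IF{1}\vee\IF{2}$. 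Consequently the last integral is $0$, so $Q(f)=0$ and $f\in W(X\mid\IF{2},T_{1})\cap A(X\mid\IF{2},T_{1})=\{0\}$, as desired.

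The step I expect to require the most care is packaging the relative-weak-mixing defect $Q(f)$ as a single inner product on the $(\pi_{1},\pi_{1,2})$-marginal of $\mu^{\square}$, together with the verification that this marginal really is the relatively independent self-joining over $\IF{2}$ with the diagonal $T_{1}$ matching $T_{\square,1}$. Once this is in place, the decisive and slightly surprising point is that the second copy of $f$ in $\bar f\otimes f$ transports to a function of the coordinate $\pi_{1}$, which is automatically $\IF{\square,2}$-measurable; this is exactly what lets the satedness orthogonality on the coordinate $\pi_{1,2}$ annihilate $Q(f)$. The remaining ingredients---the mean ergodic identity displayed above and the commutation of conditional expectation with the factor map $P$---are routine and I would not dwell on them.
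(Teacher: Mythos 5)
Your proof is correct, and it runs on the same engine as the paper's --- the cube measure $\mu^{\square}$ from Lemma~\ref{lem:k=2-cube-measure} together with the satedness orthogonality $f\circ\pi_{1,2}\perp\IF{\square,1}\vee\IF{\square,2}$ --- but it is organized along a genuinely different route. The paper never forms the relatively independent self-joining: it takes $f_{1}\perp\IF{1}\vee\IF{2}$, applies the mean ergodic theorem for $T_{2}$ to write $\|\E(f_{2}T_{1}^{h}f_{1}|\IF{2})\|^{2}$ as a \cesaro{} average over $g$, and then recognizes the resulting average over the pair $(h,g)$ as exactly the cube average of Lemma~\ref{lem:k=2-cube-measure} with slots $f_{\emptyset}=f_{\{2\}}=f_{2}$ and $f_{\{1\}}=f_{\{1,2\}}=f_{1}$, quoting that lemma's vanishing conclusion as a black box. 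You instead apply the mean ergodic theorem in the $h$-direction, for the diagonal action $T_{1}\times T_{1}$ on $X\times_{\IF{2}}X$, reducing matters to $\E(\bar f\otimes f|\mathcal{J})=0$, and then exploit your (correct) observation that this self-joining with its diagonal action is precisely the $(\pi_{1},\pi_{1,2})$-marginal of $(X^{4},\mu^{\square},T_{\square,1})$, so that the satedness orthogonality --- applied against the $\IF{\square,1}\vee\IF{\square,2}$-measurable function $(\bar f\circ\pi_{1})\overline{\Theta}$ --- kills it. Each route buys something: the paper's is shorter, since the lemma does all the work; yours avoids a point the paper glosses over, namely that its display is an iterated limit $\limsup_{n}\int_{h}\lim_{m}\int_{g}$ while Lemma~\ref{lem:k=2-cube-measure} concerns a joint limit along $\Phi_{n}\times\Psi_{n}$, so a short diagonal-subsequence argument is needed to reconcile them, whereas your two applications of the mean ergodic theorem are exact identities and no such matching arises; your proof also makes explicit the classical characterization of relative weak mixing through the invariant factor of the relative product. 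One loose end, shared with the paper: the cube-measure manipulations require bounded functions, while $A(X|\IF{2},T_{1})$ is a subspace of $L^{2}$, so a standard density remark (bounded functions orthogonal to $\IF{1}\vee\IF{2}$ are dense in $(\IF{1}\vee\IF{2})^{\perp}$) should be appended to conclude the inclusion $A(X|\IF{2},T_{1})\subseteq L^{2}(\IF{1}\vee\IF{2})$ in full.
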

\begin{proof}
The inclusion $A(X|\IF{2}, T_{1}) \supseteq \IF{1} \vee \IF{2}$ holds in any measure-preserving system.
For the converse consider $f_{1}\perp \IF{1} \vee \IF{2}$.
Then for every  $f_{2}\in L^{\infty}(X)$ we have
\[
\limsup_{n} \int_{h} \|\E(f_{2}T_{1}^{h}f_{1}|\IF{2})\|^{2} \dif F_{n}(h)
=
\limsup_{n} \int_{h} \lim_{m} \int_{g} \int f_{2} T_{1}^{h} f_{1} \cdot T_{2}^{g}(f_{2} T_{1}^{h} f_{1}) \dif\mu \dif F_{m}(g) \dif F_{n}(h).
\]
This vanishes by Lemma~\ref{lem:k=2-cube-measure}.
Thus $f_{1}\perp A(X|\IF{2}, T_{1})$.
\end{proof}

In the remaining part of this section we extend Griesmer's cubic convergence result \cite[Theorem 1.4(1)]{2008arXiv0812.1968G} to arbitrary left \reiter{} sequences.
\begin{lemma}
\label{lem:k=2-char-factors}
Let $X\in\MPS[2]$ and $f_{\epsilon}\in L^{\infty}(X)$, $\epsilon\subseteq\{1,2\}$.
Suppose that $f_{1}\perp A(X|\IF{2},T_{1})$ or $f_{2}\perp A(X|\IF{1},T_{2})$.
Then for any left \reiter{} sequences $\Phi,\Psi$ on $G$ we have
\[
\lim_{n}\iint \prod_{\epsilon\subseteq\{1,2\}}T_{\epsilon}^{\vec g} f_{\epsilon} \dif\Phi_{n}(g_{1}) \dif\Psi_{n}(g_{2}) = 0
\quad \text{in } L^{2}(X).
\]
\end{lemma}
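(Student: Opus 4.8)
The plan is to lower the complexity of the average by van der Corput's inequality, to produce a conditional expectation onto $\IF{1}$ from the average in the variable attached to $T_{1}$, and then to invoke the weak mixing of $f_{2}$ relative to $\IF{1}$. First I would normalise the problem. The integrand $\prod_{\epsilon}T_{\epsilon}^{\vec g}f_{\epsilon}$ is symmetric under interchanging the two actions (swap the labels $1\leftrightarrow 2$ and the dummy variables $g_{1}\leftrightarrow g_{2}$, using that $T_{1},T_{2}$ commute), and so are the hypothesis and the conclusion; hence I may assume $f_{2}\perp A(X|\IF{1},T_{2})$, which by Theorem~\ref{thm:A+W} means $f_{2}\in W(X|\IF{1},T_{2})$. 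As $f_{\emptyset}\in L^{\infty}(X)$ merely multiplies the whole average, it suffices to treat $f_{\emptyset}=1$. I would then view the average as a single average over $G\times G$ along the product \reiter{} sequence $\Phi_{n}\times\Psi_{n}$ of the bounded $L^{2}(X)$-valued family
\[
u_{(g_{1},g_{2})} = (T_{1}^{g_{1}}f_{1})(T_{2}^{g_{2}}f_{2})(T_{1}^{g_{1}}T_{2}^{g_{2}}f_{1,2}),
\]
so that the goal becomes $\limsup_{n}\big\|\int u_{(g_{1},g_{2})}\,\dif(\Phi_{n}\times\Psi_{n})\big\|_{2}=0$.

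Next I would apply van der Corput (Lemma~\ref{lem:vdC}) with an auxiliary measure supported on the second coordinate, in order to difference only in $g_{2}$. This dominates the square of the norm by an average, over $g_{1},g_{2}$ and two auxiliary parameters $h_{2},l_{2}$, of the inner products $\<u_{(g_{1},h_{2}g_{2})},u_{(g_{1},l_{2}g_{2})}\>$. Using commutativity and $(T_{1}^{g_{1}}f_{1})\overline{(T_{1}^{g_{1}}f_{1})}=T_{1}^{g_{1}}|f_{1}|^{2}$, each inner product becomes the integral of $T_{1}^{g_{1}}\big[|f_{1}|^{2}(T_{2}^{h_{2}g_{2}}f_{1,2})\overline{(T_{2}^{l_{2}g_{2}}f_{1,2})}\big]$ against $(T_{2}^{h_{2}g_{2}}f_{2})\overline{(T_{2}^{l_{2}g_{2}}f_{2})}$, a factor that no longer involves $g_{1}$. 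Averaging the now-free variable $g_{1}$ by the mean ergodic theorem for $T_{1}$ replaces the bracket by its conditional expectation onto $\IF{1}$, and since that is $\IF{1}$-measurable the same conditional expectation may be placed on the remaining factor; thus the $g_{1}$-averaged inner product is controlled by $\E\big((T_{2}^{h_{2}g_{2}}f_{2})\overline{(T_{2}^{l_{2}g_{2}}f_{2})}\,\big|\,\IF{1}\big)$.

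Finally I would bring in the weak mixing. Because $T_{1}$ and $T_{2}$ commute, $T_{2}$ preserves $\IF{1}$ and commutes with $\E(\cdot|\IF{1})$; peeling off the uniformly bounded $f_{1}$- and $f_{1,2}$-contribution by Cauchy--Schwarz then bounds each $g_{1}$-averaged inner product by a constant multiple of $\|\E(f_{2}\,\overline{T_{2}^{s}f_{2}}\,|\,\IF{1})\|_{2}$, where $s=g_{2}^{-1}h_{2}^{-1}l_{2}g_{2}$. Choosing the auxiliary measure to run along $\Psi$ and exploiting the infimum in van der Corput, the average of these norms tends to $0$ by the characterisation of $W(X|\IF{1},T_{2})$ in Theorem~\ref{thm:A+W} (applied to $\overline{f_{2}}$, which lies in the same space).

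The step I expect to be the main obstacle is the rigorous decoupling of the two linked averages: replacing the inner $g_{1}$-average by its mean ergodic limit while $g_{2}$ is simultaneously averaged along the same index $n$. I would handle this using that mean ergodic averages converge uniformly on compact subsets of $L^{2}(X)$, together with the asymptotic invariance of the \reiter{} sequences. The same invariance---now in the form that conjugates of \reiter{} sequences are again \reiter{}---is what lets the weak mixing absorb the conjugated shift $s=g_{2}^{-1}h_{2}^{-1}l_{2}g_{2}$ that the non-commutativity of $G$ forces upon us.
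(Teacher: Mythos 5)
Your outline follows the same skeleton as the paper's proof (van der Corput, then the mean ergodic theorem, then the weak-mixing hypothesis), but the step you yourself flag as the main obstacle is a genuine gap, and neither of the two tools you propose can close it. After differencing in $g_{2}$, the quantity to control is
\[
\limsup_{m}\iint\Big(\int T_{1}^{g_{1}}\big(f_{1}^{2}\cdot T_{2}^{g_{2}}C\big)\cdot T_{2}^{g_{2}}D\,\dif\mu\Big)\dif\Phi_{m}(g_{1})\dif\Psi_{m}(g_{2}),
\]
where (in the paper's anti-action convention) $C=T_{2}^{h_{2}}f_{1,2}\,T_{2}^{l_{2}}f_{1,2}$ and $D=T_{2}^{h_{2}}f_{2}\,T_{2}^{l_{2}}f_{2}$ depend on the differencing parameters but not on $g_{1},g_{2}$. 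To replace the inner $g_{1}$-average by $\E(\,\cdot\,|\IF{1})$ while $g_{2}$ is averaged along the same index $m$, you would need the mean ergodic theorem for $T_{1}$ to converge uniformly over the family $\{f_{1}^{2}\cdot T_{2}^{g_{2}}C : g_{2}\in G\}$. Uniform convergence on compact sets does not help: this family is, up to the fixed factor $f_{1}^{2}$, a full $T_{2}$-orbit, and such an orbit is precompact in $L^{2}(X)$ only when $C$ lies in $\KF{2}=A(X|\mathrm{trivial},T_{2})$ --- the opposite of the generic (weakly mixing) situation, and nothing in the hypotheses grants it. Your second tool fails similarly: conjugation $F_{N}\mapsto\delta_{g\inv}*F_{N}*\delta_{g}$ preserves the left \reiter{} property for a \emph{fixed} $g$, but your shift $s=g_{2}\inv h_{2}\inv l_{2}g_{2}$ has $g_{2}$ ranging over $\Psi_{m}$ itself; the law of $s$ is $\Psi_{m}^{*}*\delta_{h_{2}\inv l_{2}}*\Psi_{m}$, which is a left \reiter{} sequence only if $\Psi_{m}^{*}$ is, i.e.\ only if $\Psi$ is two-sided.

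Both failures have the same root: every available decoupling device (inserting $T_{1}^{g_{1}\inv}T_{2}^{g_{2}\inv}$ under the integral so that the $g_{1}$- and $g_{2}$-dependence lands in separate factors, or absorbing inverted and conjugated shifts into the weak-mixing hypothesis) requires \emph{two-sided} \reiter{} sequences, while the lemma is asserted for arbitrary \emph{left} ones. That gap is precisely what Lemma~\ref{lem:k=2-cube-measure} --- the one ingredient your proof never invokes, and the only place where the satedness machinery enters --- is designed to bridge: it shows that the post-van der Corput expression, which is again of the form $\int\prod_{\epsilon}T_{\epsilon}^{\vec g}\tilde f_{\epsilon}\dif\mu$ with $\tilde f_{\emptyset}=1$, $\tilde f_{1}=f_{1}^{2}$, $\tilde f_{2}=D$, $\tilde f_{1,2}=C$, has a limit in $m$ that is \emph{independent} of the choice of left \reiter{} sequences, so one may simply compute it with two-sided sequences, after which the inverse-shift trick plus the mean ergodic theorem for the $G\times G$-action finishes the proof. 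With that lemma inserted after your van der Corput step, your argument goes through and is essentially the paper's (the paper differences in both variables at once, you in one --- an immaterial difference); without it, what you have proves the lemma only for two-sided \reiter{} sequences, i.e.\ it reproves Griesmer's theorem rather than the stated extension of it.
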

\begin{proof}
Since the $\epsilon=\emptyset$ term is a bounded function that does not depend on $g_{1},g_{2}$, we may discard it.
We apply Corollary~\ref{cor:vdC} to the map $(g_{1},g_{2})\mapsto \prod_{\epsilon\neq\emptyset}T_{\epsilon}^{\vec g} f_{\epsilon}$.
To show that it converges to zero in the \cesaro{} sense along $\Phi\times\Psi$ it thus suffices to show that
\[
\liminf_{n} \limsup_{m} \iiint \prod_{\epsilon\neq\emptyset}T_{\epsilon}^{\vec g} (f_{\epsilon} T_{\epsilon}^{\vec h} f_{\epsilon}) \dif\mu \dif(\Phi_{m}\times\Psi_{m})(g_{1},g_{2}) \dif(\Phi'_{n}\times\Psi'_{n})(h_{1},h_{2}) = 0.
\]
By Lemma~\ref{lem:k=2-cube-measure} the limit superior in $m$ is actually a limit and it does not depend on $\Phi,\Psi$.
Thus we may replace $\Phi,\Psi$ by two-sided \reiter{} sequences (this is how we remove the two-sidedness assumption from Griesmer's convergence result).
The double limit equals
\[
\liminf_{n} \lim_{m} \iiint T_{1}^{g_{1}\inv}T_{2}^{g_{2}\inv} \prod_{\epsilon\neq\emptyset}T_{\epsilon}^{\vec g} (f_{\epsilon} T_{\epsilon}^{\vec h} f_{\epsilon}) \dif\mu \dif(\Phi_{m}\times\Psi_{m})(g_{1},g_{2}) \dif(\Phi'_{n}\times\Psi'_{n})(h_{1},h_{2}),
\]
and by the mean ergodic theorem this equals
\[
\liminf_{n} \iint \E(f_{1}T_{1}^{h_{1}}f_{1}|\IF{2}) \E(f_{2} T_{2}^{h_{2}}f_{2}|\IF{1}) f_{1,2} T_{1,2}^{\vec h}f_{1,2} \dif\mu \dif(\Phi'_{n}\times\Psi'_{n})(h_{1},h_{2}).
\]
By the Cauchy--Schwarz inequality this is bounded by
\[
\liminf_{n} \|f_{1,2}\|_{\infty}^{2} \int \|\E(f_{1}T_{1}^{h_{1}}f_{1}|\IF{2})\|_{2} \dif\Phi'_{n}(h_{1}) \int \|\E(f_{2} T_{2}^{h_{2}}f_{2}|\IF{1})\|_{2} \dif\Psi'_{n}(h_{2}),
\]
and this vanishes by the assumption.
\end{proof}

\begin{corollary}
\label{cor:cubic-convergence}
Suppose that $X\in\MPS[2]$.
Then for any $f_{\epsilon}\in L^{\infty}(X)$, $\epsilon\subseteq\{1,2\}$, and any left \reiter{} sequences $\Phi,\Psi$ the limit
\[
\lim_{n} \iint \prod_{\epsilon\subseteq\{1,2\}} T_{\epsilon}^{\vec g} f_{\epsilon} \dif\Phi_{n}(g_{1}) \dif\Psi_{n}(g_{2})
\]
exists in $L^{2}(X)$ and does not depend on $\Phi,\Psi$.
\end{corollary}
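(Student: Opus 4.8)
The plan is to reduce in three stages to a situation in which both the convergence and the independence of $\Phi,\Psi$ can be read off from the mean ergodic theorem. First I would pass to a convenient extension: a factor map $\pi\colon X'\to X$ embeds $L^{2}(X)$ isometrically into $L^{2}(X')$ and intertwines the actions, so the cubic average of functions pulled back from $X$ is the pullback of the cubic average on $X$. Hence convergence in $L^{2}(X')$ is equivalent to convergence in $L^{2}(X)$, and independence of $\Phi,\Psi$ transfers. Applying Theorem~\ref{thm:sated-ext} with $\mathcal{C}=\MPS[2]$ I would replace $X$ by an $\Inv[2]$-sated extension, which by Proposition~\ref{prop:KI-in-sated} is magic, so that $A(X|\IF{2},T_{1})=\IF{1}\vee\IF{2}$ and, by the symmetry of Lemma~\ref{lem:k=2-cube-measure} under interchanging the two actions, also $A(X|\IF{1},T_{2})=\IF{1}\vee\IF{2}$.

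Second, I would reduce the functions $f_{1},f_{2}$ to these characteristic factors. Using the orthogonal splitting of Theorem~\ref{thm:A+W} I would write $f_{1}=f_{1}'+f_{1}''$ with $f_{1}'\in A(X|\IF{2},T_{1})$ and $f_{1}''\in W(X|\IF{2},T_{1})$, and similarly $f_{2}=f_{2}'+f_{2}''$. Since the average is multilinear in $(f_{\epsilon})$ and, by Lemma~\ref{lem:k=2-char-factors}, every summand containing $f_{1}''$ or $f_{2}''$ tends to $0$ in $L^{2}$, only the term with $f_{1}',f_{2}'$ survives in the limit. Having passed to the magic extension, I may therefore assume $f_{1},f_{2}\in L^{\infty}(\IF{1}\vee\IF{2})$ (conditional expectation being an $L^{\infty}$-contraction), while $f_{\emptyset},f_{1,2}$ remain arbitrary bounded functions.

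Third, I would compute the surviving average directly. From the multilinear bound $\|A_{n}\|_{2}\le\|f_{\emptyset}\|_{\infty}\|f_{2}\|_{\infty}\|f_{1,2}\|_{\infty}\|f_{1}\|_{2}$, together with its analogue in $f_{2}$, it suffices by density and linearity to treat $f_{1}=a_{1}b_{1}$ and $f_{2}=a_{2}b_{2}$ with $a_{i}\in L^{\infty}(\IF{1})$ and $b_{i}\in L^{\infty}(\IF{2})$. Exploiting that $a_{i}$ is $T_{1}$-invariant, $b_{i}$ is $T_{2}$-invariant, and that the actions commute, I would rewrite the integrand $\prod_{\epsilon}T_{\epsilon}^{\vec g}f_{\epsilon}$ as $f_{\emptyset}\,a_{1}\,b_{2}\,T_{1}^{g_{1}}\!\big(b_{1}\,T_{2}^{g_{2}}(a_{2}f_{1,2})\big)$. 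Integrating first over $g_{2}$ along $\Psi$ and then over $g_{1}$ along $\Phi$, the mean ergodic theorem for $T_{2}$ and then for $T_{1}$ yields convergence in $L^{2}$ to
\[
f_{\emptyset}\, a_{1}\, b_{2}\, \E\big( b_{1}\, \E(a_{2}f_{1,2}|\IF{2}) \,\big|\, \IF{1} \big),
\]
which is manifestly independent of $\Phi$ and $\Psi$. The uniform operator bound then propagates convergence and independence to all $f_{1},f_{2}\in L^{2}(\IF{1}\vee\IF{2})$, completing the argument.

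I expect the main obstacle to be the bookkeeping in the third step: making the two successive applications of the mean ergodic theorem rigorous requires the partial average over one variable to converge uniformly with respect to the other. I would obtain this not from any quantitative rate but from the fact that the $T_{i}^{g}$ are isometries, writing $\int T_{1}^{g_{1}}(b_{1}w_{n})\,\dif\Phi_{n}(g_{1})$ as $\int T_{1}^{g_{1}}(b_{1}w)\,\dif\Phi_{n}(g_{1})$ plus an error bounded by $\|b_{1}\|_{\infty}\|w_{n}-w\|_{2}$, where $w_{n}:=\int T_{2}^{g_{2}}(a_{2}f_{1,2})\,\dif\Psi_{n}(g_{2})\to w:=\E(a_{2}f_{1,2}|\IF{2})$. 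A secondary point requiring care is the claim in the first step that satedness gives the characteristic factor identity in \emph{both} coordinates, for which I would invoke the symmetry of the construction of $\mu^{\square}$ in Lemma~\ref{lem:k=2-cube-measure} under interchanging the two actions.
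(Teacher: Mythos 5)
Your proposal is correct and follows essentially the same route as the paper: pass to an $\Inv[2]$-sated (hence magic) extension, exploiting the symmetry of the class $\Inv[2]$ to get $A(X|\IF{2},T_{1})=A(X|\IF{1},T_{2})=\IF{1}\vee\IF{2}$, reduce $f_{1},f_{2}$ to $\IF{1}\vee\IF{2}$ via Lemma~\ref{lem:k=2-char-factors}, and conclude by density using products of invariant functions and the mean ergodic theorem. The only cosmetic difference is in the last step, where the paper applies the mean ergodic theorem once to the product action $T_{1,2}^{\vec g}$ of $G\times G$, while you iterate the single-variable theorem with an isometry-based error estimate; both are valid.
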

\begin{proof}
By Theorem~\ref{thm:sated-ext} we may assume that $X$ is $\Inv[2]$-sated.
Then also the system $(X,\mu,T_{2},T_{1})$ (in which the roles of the two actions were interchanged) is $\Inv[2]$-sated.
By Proposition~\ref{prop:KI-in-sated} it follows that $\IF{1}\vee\IF{2} = A(X|\IF{1},T_{2}) = A(X|\IF{2},T_{1})$.
By Lemma~\ref{lem:k=2-char-factors} we may assume that the functions $f_{1}$ and $f_{2}$ are measurable with respect to the $\sigma$-algebra $\IF{1}\vee \IF{2}$.
By density and linearity we may assume that $f_{2}=h_{2}^{1}h_{2}^{2}$ and $f_{1}=h_{1}^{1}h_{1}^{2}$ with $h_{\epsilon}^{i}$ being $T_{i}$-invariant.
In this case we have
\[
\iint \prod_{\epsilon\subseteq\{1,2\}} T_{\epsilon}^{\vec g} f_{\epsilon} \dif\Phi_{n}(g_{1}) \dif\Psi_{n}(g_{2})
=
\iint f_{\emptyset}h_{1}^{1}h_{2}^{2} T_{1,2}^{\vec g} (f_{1,2}h_{1}^{2}h_{2}^{1}) \dif\Phi_{n}(g_{1}) \dif\Psi_{n}(g_{2}),
\]
and the conclusion follows from the mean ergodic theorem.
\end{proof}

\section{Furstenberg averages with almost periodic weights}
Recall that the right shift of a function $f$ on $G$ by an element $g$ is defined by $R_{g}f(h):=f(hg)$ and the left shift by $L_{g}f(h):=f(g\inv h)$.
The left and the right shifts are commuting $G$-actions.
A continuous function $f:G\to\C$ is called \emph{almost periodic} if the set $L_{G}R_{G}f=\{L_{g}R_{g'}f : g,g'\in G\}$ is totally bounded with respect to the metric induced by the supremum norm.
The set of continuous almost periodic functions is denoted by $\AP(G)$, it is a closed conjugation invariant subalgebra of the space of bounded continuous functions on $G$.
\begin{lemma}
\label{lem:R-cont-on-AP}
The left and the right shift are jointly continuous on $G\times\AP(G)$.
\end{lemma}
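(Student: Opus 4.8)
The plan is to treat the left shift; the right shift is then obtained by the symmetric argument (one may also conjugate by the inversion map $g\mapsto g\inv$, which exchanges the two shifts). The structural facts I will exploit are that $\AP(G)$ is invariant under both shifts and that each $L_{g}$ acts on $(\AP(G),\|\cdot\|_{\infty})$ as a linear isometry satisfying $L_{g}L_{g'}=L_{gg'}$. Indeed, for $f\in\AP(G)$ the two-sided orbit of $L_{g}f$ is contained in $L_{G}R_{G}f$, hence totally bounded, so $L_{g}f\in\AP(G)$; and $\|L_{g}h\|_{\infty}=\|h\|_{\infty}$ because left translation merely permutes the values of $h$.

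First I would reduce joint continuity to separate continuity of the orbit maps. Given $g_{n}\to g_{0}$ in $G$ and $f_{n}\to f_{0}$ in $\AP(G)$, the isometry property yields
\[
\|L_{g_{n}}f_{n}-L_{g_{0}}f_{0}\|_{\infty}
\leq \|L_{g_{n}}(f_{n}-f_{0})\|_{\infty}+\|L_{g_{n}}f_{0}-L_{g_{0}}f_{0}\|_{\infty}
= \|f_{n}-f_{0}\|_{\infty}+\|L_{g_{n}}f_{0}-L_{g_{0}}f_{0}\|_{\infty}.
\]
The first summand tends to $0$ by hypothesis, so it suffices to show that for each fixed $f\in\AP(G)$ the map $g\mapsto L_{g}f$ is continuous. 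Writing $L_{g}=L_{g_{0}}L_{g_{0}\inv g}$ and invoking the isometry property once more, this reduces to continuity at the identity, namely the claim that $\|L_{k}f-f\|_{\infty}\to 0$ as $k\to e$.

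The hard part is precisely this last claim, which upgrades the pointwise continuity of $f$ to a uniform statement; this is where almost periodicity is essential, and I would argue by contradiction via compactness. First note that, setting $g=e$ in $L_{g}R_{g'}f$, the right orbit $R_{G}f=\{R_{g'}f:g'\in G\}$ is contained in the totally bounded set $L_{G}R_{G}f$, hence is itself totally bounded, so its closure in the Banach space $C_{b}(G)$ is compact. Suppose the claim fails: then there are $\epsilon>0$, a sequence $k_{n}\to e$, and points $x_{n}\in G$ with $|f(k_{n}\inv x_{n})-f(x_{n})|\geq\epsilon$ (since $G$ is lcsc it is metrizable, so sequences suffice). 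Writing $\phi_{n}:=R_{x_{n}}f$, so that $\phi_{n}(h)=f(hx_{n})$, the inequality reads $|\phi_{n}(k_{n}\inv)-\phi_{n}(e)|\geq\epsilon$. By compactness of $\overline{R_{G}f}$ I may pass to a subsequence along which $\phi_{n}\to\psi$ uniformly for some $\psi\in C_{b}(G)$. Then $\phi_{n}(e)\to\psi(e)$, while
\[
|\phi_{n}(k_{n}\inv)-\psi(e)|\leq\|\phi_{n}-\psi\|_{\infty}+|\psi(k_{n}\inv)-\psi(e)|\to 0
\]
because $\psi$ is continuous at $e$ and $k_{n}\inv\to e$. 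Hence $|\phi_{n}(k_{n}\inv)-\phi_{n}(e)|\to 0$, contradicting the lower bound $\epsilon$. This establishes continuity at the identity and completes the argument for the left shift; for the right shift one repeats the reasoning verbatim with $\phi_{n}:=L_{x_{n}\inv}f$ and the total boundedness of the left orbit $L_{G}f$.
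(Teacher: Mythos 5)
Your proof is correct, and it reaches the conclusion by a somewhat different route than the paper. The paper proves joint continuity of $R$ at a point $(g_{0},f_{0})$ directly: total boundedness of $L_{G}f_{0}$ yields a finite $\epsilon$-dense set $F$, continuity of the finitely many members of $F$ yields a single neighborhood $U$ of $g_{0}$ on which they all vary by less than $\epsilon$, and the key identity $R_{g}f_{0}(g') = L_{(g')\inv}f_{0}(g)$ converts right-shift increments into evaluations of left-orbit elements, so a triangle inequality (which also absorbs the perturbation $\|f'-f_{0}\|_{\infty}<\epsilon$) finishes the proof. You instead use the standard reduction for isometric actions --- joint continuity follows from norm continuity of the orbit maps, which by the group law reduces to continuity at the identity --- and then prove $\|L_{k}f-f\|_{\infty}\to 0$ as $k\to e$ by contradiction, extracting a uniformly convergent subsequence from the precompact opposite orbit $\{R_{x_{n}}f\}$ and using continuity of the limit function at $e$. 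Both arguments exploit the same duality (precompactness of the opposite orbit gives equicontinuity of the given shift); yours trades the paper's explicit finite-net estimate for a compactness-by-contradiction argument with a cleaner conceptual frontend, while the paper's version is fully explicit and net-free.

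One caveat deserves mention: your extraction of sequences $k_{n}\to e$, and more generally your use of sequences to test continuity, requires $G$ to be first countable. This is automatic if lcsc is read in its standard meaning of locally compact second countable (then $G$ is metrizable by Birkhoff--Kakutani), but the paper's stated convention is locally compact $\sigma$-compact, under which metrizability can fail. This is not a genuine gap --- your argument goes through verbatim with nets in place of sequences, since compactness of $\overline{R_{G}f}$ still provides convergent subnets --- but the parenthetical justification ``since $G$ is lcsc it is metrizable'' should either be tied to the second-countability reading or replaced by the net formulation.
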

\begin{proof}
We will show that $R$ is jointly continuous at every point $(g_{0},f_{0})\in G\times\AP(G)$, the proof for $L$ is nearly identical.
Let $\epsilon>0$.
By definition of $\AP(G)$ there exists a finite $\epsilon$-dense subset $F\subset L_{G}f_{0}$.
Since $F\subset C(X)$, there exists a neighborhood $U$ of $g_{0}$ such that $|f(g)-f(g_{0})|<\epsilon$ for every $g\in U$ and every $f\in F$.
Let $f'\in\AP(G)$ be such that $\|f'-f_{0}\|_{\infty}<\epsilon$.
Then for every $g'\in G$ and $g\in U$ we have
\begin{multline*}
|R_{g}f'(g') - R_{g_{0}}f_{0}(g')|
\leq
|R_{g}f'(g') - R_{g}f_{0}(g')| + |R_{g}f_{0}(g') - R_{g_{0}}f_{0}(g')|\\
\leq
\epsilon + |L_{(g')\inv}f_{0}(g) - L_{(g')\inv}f_{0}(g_{0})|
\leq
3\epsilon + |f(g) - f(g_{0})|
\leq
4\epsilon
\end{multline*}
for some $f\in F$.
\end{proof}
We recall a consequence of the Peter--Weyl theorem.
\begin{theorem}
\label{thm:ap}
Let $f\in C(G)$.
Then the following conditions are equivalent.
\begin{enumerate}
\item\label{ap:ap} $f\in\AP(G)$.
\item\label{ap:compactification} There exists a compact group $K$ and a continuous homomorphism $\iota:G\to K$ such that $f=f'\circ\iota$ for some $f'\in C(K)$.
\item\label{ap:matrix-coeff} $f$ is a uniform limit of matrix coefficients, that is, functions of the form $g\mapsto\<\pi(g)v,w\>$, where $\pi:G\to U(d)$ is a continuous finite-dimensional representation and $v,w\in\C^{d}$.
\item\label{ap:matrix-coeff-meas} $f$ is a uniform limit of functions of the form $\chi(g) = \<w,\pi(g)v\>$, where $\pi:G\to U(d)$ is a measurable antihomomorphsm and $v,w\in\C^{d}$.
\end{enumerate}
\end{theorem}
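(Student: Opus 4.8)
The plan is to prove the cycle $\eqref{ap:ap}\Rightarrow\eqref{ap:compactification}\Rightarrow\eqref{ap:matrix-coeff}\Rightarrow\eqref{ap:ap}$ together with the equivalence $\eqref{ap:matrix-coeff}\Leftrightarrow\eqref{ap:matrix-coeff-meas}$. All of these arrows are soft except for $\eqref{ap:ap}\Rightarrow\eqref{ap:compactification}$, which is the construction of the Bohr compactification and is the only place where almost periodicity is genuinely used.

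For $\eqref{ap:compactification}\Rightarrow\eqref{ap:matrix-coeff}$ I would first replace $K$ by the closure $\overline{\iota(G)}$, which is again a compact group, so that $\iota$ has dense range and $\phi\mapsto\phi\circ\iota$ is an isometry $C(K)\to C(G)$. The Peter--Weyl theorem then exhibits $f'$ as a uniform limit of (linear combinations of) matrix coefficients of finite-dimensional unitary representations $\rho$ of $K$; composing with $\iota$ produces continuous finite-dimensional unitary representations $\rho\circ\iota$ of $G$ and writes $f=f'\circ\iota$ as a uniform limit of the corresponding matrix coefficients of $G$. For $\eqref{ap:matrix-coeff}\Rightarrow\eqref{ap:ap}$ it suffices, since $\AP(G)$ is closed, to check that a single matrix coefficient $c(h)=\<\pi(h)v,w\>$ of a continuous $\pi\colon G\to U(d)$ lies in $\AP(G)$. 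This follows from the identity $L_{g}R_{g'}c(h)=\<\pi(h)\pi(g')v,\pi(g)w\>$ (using $\pi(g\inv)^{*}=\pi(g)$), which shows that $L_{G}R_{G}c$ is contained in the image of the compact set $\{v'\in\C^{d}:\|v'\|\le\|v\|\}\times\{w'\in\C^{d}:\|w'\|\le\|w\|\}$ under the continuous map $(v',w')\mapsto\<\pi(\cdot)v',w'\>$ into the finite-dimensional space of matrix coefficients of $\pi$, and is therefore totally bounded.

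The equivalence $\eqref{ap:matrix-coeff}\Leftrightarrow\eqref{ap:matrix-coeff-meas}$ is bookkeeping plus one external input. For a unitary representation one has $\pi(g)\inv=\pi(g)^{*}$, so $\tilde\pi(g):=\pi(g)^{*}$ converts a homomorphism into an antihomomorphism and conversely, while $\<w,\pi(g)v\>=\<\tilde\pi(g)w,v\>$ and $\<\pi(g)v,w\>=\overline{\<w,\pi(g)v\>}$ move a coefficient between the two shapes; since $v,w$ range freely and $\AP(G)$ is conjugation invariant, the two families of elementary functions coincide and so do their uniform closures. The only non-formal point is that in $\eqref{ap:matrix-coeff-meas}$ the representation is merely measurable: here I would apply a standard automatic-continuity result for measurable homomorphisms into second countable groups, e.g.\ \cite[22.18]{MR551496}, to $g\mapsto\pi(g\inv)$, reducing to the continuous case $\eqref{ap:matrix-coeff}$.

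The hard part is $\eqref{ap:ap}\Rightarrow\eqref{ap:compactification}$. I would take $\AP(G)$, which by the remarks preceding the theorem is a unital commutative $C^{*}$-subalgebra of the bounded continuous functions, and let $bG$ be its Gelfand spectrum, a compact Hausdorff space with $\AP(G)\cong C(bG)$; the evaluation map $\iota\colon G\to bG$ is continuous (each $\phi\in\AP(G)$ is continuous) and has dense range (otherwise some nonzero element of $C(bG)$, hence of $\AP(G)$, would vanish on all of $G$). It remains to transport the group structure: the left and right translations are isometric automorphisms of $\AP(G)$ and dualize to homeomorphisms of $bG$, and one must show that multiplication and inversion extend continuously from $\iota(G)$ to all of $bG$, making $bG$ a compact topological group with $\iota$ a homomorphism. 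This extension is exactly what fails for general bounded continuous functions, and the two-sided total boundedness of $L_{G}R_{G}f$ for every $f$ — together with the joint continuity of the shifts from Lemma~\ref{lem:R-cont-on-AP} — is what supplies the equicontinuity needed for joint continuity of the extended multiplication. I expect this step to be the main obstacle. Once $bG$ is known to be a compact group, every $f\in\AP(G)=C(bG)$ has the form $f'\circ\iota$, which is $\eqref{ap:compactification}$. An alternative that avoids building $bG$ is to run a Peter--Weyl argument directly on the Hilbert space completion of $\AP(G)$ under its unique invariant mean, decomposing the right-translation representation into finite-dimensional invariant subspaces and reading off the matrix coefficients, thereby proving $\eqref{ap:ap}\Rightarrow\eqref{ap:matrix-coeff}$ directly.
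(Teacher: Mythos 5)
Most of your proposal is sound: the implication \eqref{ap:compactification}$\Rightarrow$\eqref{ap:matrix-coeff} via Peter--Weyl on $K$, the direct verification that a single coefficient $c=\<\pi(\cdot)v,w\>$ of a continuous unitary representation lies in $\AP(G)$ (via $L_{g}R_{g'}c=\<\pi(\cdot)\pi(g')v,\pi(g)w\>$ and compactness of the parameter set), and the reduction of \eqref{ap:matrix-coeff-meas} to \eqref{ap:matrix-coeff} by automatic continuity of measurable homomorphisms are all correct. (The paper gets \eqref{ap:matrix-coeff}$\Rightarrow$\eqref{ap:ap} more cheaply, by observing that a matrix coefficient satisfies \eqref{ap:compactification} with $\iota=\pi$ and $K=U(d)$, but your direct computation is equally valid.)

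The genuine gap is in \eqref{ap:ap}$\Rightarrow$\eqref{ap:compactification}, and you flag it yourself. Realizing $bG$ as the Gelfand spectrum of $\AP(G)$ gives a compact Hausdorff space and a continuous dense-range map $\iota\colon G\to bG$, but the entire content of the implication is that the group operations extend to make $bG$ a compact \emph{topological group}, and your argument stops exactly there. This step is not routine: for a general translation-invariant unital $C^{*}$-subalgebra of bounded continuous functions the spectrum carries at best a semigroup structure with only separately continuous multiplication (this is precisely what happens for weakly almost periodic functions), so the passage from total boundedness of $L_{G}R_{G}f$ to a jointly continuous extended multiplication with continuous inversion is the classical von Neumann--Weil argument, which must be carried out or cited, not merely invoked as "equicontinuity". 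Your fallback sketch has the same problem one level down: the invariant mean on $\AP(G)$, needed to define the Hilbert space on which you would run Peter--Weyl, is itself a nontrivial theorem that is usually deduced from the compactification (Haar measure on $bG$). The paper avoids building any universal object: for the given $f$, the orbit closure $X=\overline{R_{G}f}$ is a compact metric space by \eqref{ap:ap}, Lemma~\ref{lem:R-cont-on-AP} shows the $G$-action $R$ on $X$ is jointly continuous and isometric, and the cited result \cite[\textsection 3, Theorem 2]{MR956049} then produces a compact group $K$ (the closure of $R_{G}$ in the isometry group of $X$) and a continuous homomorphism $\iota\colon G\to K$ with $R_{g}=R'_{\iota(g)}$; setting $f'(k):=(R'_{k}f)(\id)$ gives $f'\in C(K)$ with $f=f'\circ\iota$. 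If you want to keep your architecture, the cleanest repair is to cite the existence and universal property of the Bohr compactification outright rather than reconstruct it.
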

\begin{proof}
If \eqref{ap:compactification} holds, then $L_{G}R_{G}f$ is totally bounded as an isometric image of a subset of $L_{K}R_{K}f'$, and this shows \eqref{ap:ap}.
Conversely, if \eqref{ap:ap} holds, then $X:=\overline{R_{G}f}\subset\AP(G)$ is a compact metric space, and by Lemma~\ref{lem:R-cont-on-AP} the $G$-action $R$ is jointly continuous on $X$.
Since $R_{g}$ is isometric for each $g\in G$ and by \cite[\textsection 3, Theorem 2]{MR956049} we obtain a compactification $\iota:G\to K$ and a $K$-action $R'$ on $X$ such that $R_{g}=R'_{\iota(g)}$ for all $g\in G$.
Let $e:X\to\C$ be the evaluation at the identity.
Then $f'(k):=e(R'_{k}f)$ is a continuous function on $K$ that extends $f$.

Suppose now that $f$ is a matrix coefficient associated to a representation $\pi$.
Then $f\in\AP(G)$ since \eqref{ap:compactification} is satisfied with $\iota=\pi$.
Since $\AP(G)$ is closed, this shows that \eqref{ap:matrix-coeff} implies \eqref{ap:ap}.
Conversely, suppose that \eqref{ap:compactification} holds.
Then $f'$ is a uniform limit of matrix coefficients on $K$ by \cite[Theorem 5.11]{MR1397028}.
On the other hand, if $\chi$ is a matrix coefficient on $K$, then $\chi\circ\iota$ is a matrix coefficient on $G$, so we obtain \eqref{ap:matrix-coeff}.

Finally, it is clear that \eqref{ap:matrix-coeff} implies \eqref{ap:matrix-coeff-meas}.
Conversely, every measurable homomorphism $G\to U(d)$ is continuous \cite[Theorem 22.18]{MR551496}, so that \eqref{ap:matrix-coeff-meas} implies \eqref{ap:matrix-coeff}.
\end{proof}

\begin{corollary}
\label{cor:Clim-positive}
For every $\chi\in AP(G)$ the uniform \cesaro{} limit $\UClim_{g} \chi(g)$ exists.
If $\chi$ is positive and not identically zero, then $\UClim_{g} \chi(g)>0$.
\end{corollary}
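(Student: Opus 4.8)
The plan is to transport the problem to a compact group via the compactification furnished by Theorem~\ref{thm:ap} and to identify the uniform \cesaro{} average of $\chi$ with an integral against Haar measure, where both existence and positivity become transparent.

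By Theorem~\ref{thm:ap}\eqref{ap:compactification} there is a compact group $K$, a continuous homomorphism $\iota\colon G\to K$, and $\chi'\in C(K)$ with $\chi=\chi'\circ\iota$. Replacing $K$ by the closed (hence compact) subgroup $\overline{\iota(G)}$ and $\chi'$ by its restriction, I may assume $\iota(G)$ is dense in $K$, and, since $\chi$ is a single almost periodic function, that $K$ is metrizable (this is automatic for the compactification attached to the orbit closure $\overline{R_{G}\chi}$ appearing in the proof of Theorem~\ref{thm:ap}). Let $m_{K}$ be the normalized Haar measure on $K$. I claim that for \emph{every} left \reiter{} sequence $F$ one has $\int\chi\,\dif F_{N}\to\int_{K}\chi'\,\dif m_{K}$. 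Since the right-hand side is independent of $F$, this simultaneously shows that $\UClim_{g}\chi(g)$ exists and equals $\int_{K}\chi'\,\dif m_{K}$.

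To prove the claim I would push the \reiter{} sequence forward, setting $\nu_{N}:=\iota_{*}F_{N}\in\probmeas(K)$, so that $\int\chi\,\dif F_{N}=\int_{K}\chi'\,\dif\nu_{N}$. Since $\probmeas(K)$ is weak-$*$ sequentially compact (as $K$ is compact metric), it suffices to check that every weak-$*$ limit point of $(\nu_{N})$ equals $m_{K}$. Fix $g\in G$ and apply the \reiter{} property with $h=\delta_{g}$ to get $\|\delta_{g}*F_{N}-F_{N}\|\to 0$; because pushforward under the homomorphism $\iota$ is a contraction for the total variation norm and intertwines convolution, this yields $\|\delta_{\iota(g)}*\nu_{N}-\nu_{N}\|\to 0$. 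Hence any weak-$*$ limit $\nu$ of a subsequence of $(\nu_{N})$ satisfies $\delta_{\iota(g)}*\nu=\nu$ for every $g\in G$ (the total variation bound passes to the weak-$*$ limit), and by density of $\iota(G)$ in $K$ together with the weak-$*$ continuity of left translation, $\nu$ is left-invariant, so $\nu=m_{K}$ by uniqueness of Haar measure. This identification of the limit measure as Haar measure is the one step requiring genuine care; everything reduces to checking invariance on the dense subgroup $\iota(G)$, which the \reiter{} property supplies directly.

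Finally, for positivity, $\chi\geq 0$ forces $\chi'\geq 0$ on the dense set $\iota(G)$, hence on all of $K$ by continuity, while $\chi\not\equiv 0$ produces a point $\iota(g_{0})$ at which $\chi'>0$, so that $\chi'$ is strictly positive on some nonempty open set $U\subset K$. Since the Haar measure of a compact group charges every nonempty open set (covering $K$ by finitely many left translates of $U$ forbids $m_{K}(U)=0$), I conclude $\int_{K}\chi'\,\dif m_{K}\geq\int_{U}\chi'\,\dif m_{K}>0$.
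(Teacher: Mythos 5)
Your proof is correct and follows essentially the same route as the paper: pass to the compactification $\iota\colon G\to K$ furnished by Theorem~\ref{thm:ap}, identify $\UClim_{g}\chi(g)$ with $\int_{K}\chi'\,\dif m_{K}$, and deduce positivity from the full support of the Haar measure. The only difference is that where the paper simply cites the ergodic theorem for uniquely ergodic actions (the unique invariant measure on $K$ being Haar, since $\iota(G)$ is dense), you prove that step directly by pushing the \reiter{} sequence forward and showing every weak-$*$ limit point of $\iota_{*}F_{N}$ is invariant, hence Haar --- a self-contained but standard unpacking of the same argument.
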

\begin{proof}
By Theorem~\ref{thm:ap} we have $\chi=f\circ\iota$ for some compactification $\iota:G\to K$ and some $f\in C(K)$.
Since $\iota(G)$ is dense in $K$, the only $G$-invariant measure on $K$ is the Haar measure $\nu$.
Hence $\UClim_{g} \chi_{g} = \UClim_{g} f\circ\iota(g)$ exists and equals $\int_{K} f \dif\nu$ by the ergodic theorem for uniquely ergodic actions.

If $\chi$ is positive and not identically zero, then the same is true of $f$, so $\int_{X} f \dif\nu > 0$ since $\nu$ has full support.
\end{proof}

\begin{proposition}
\label{prop:char-factor-f1}
Let $X\in\MPS[2]$, $f_{1},f_{2}\in L^{\infty}(X)$, and let $\chi\in\AP(G)$.
Assume that $f_{1}\perp A(X|\IF{2},T_{1})$.
Then we have
\[
\UClim_{g} \chi(g) T_{1}^{g} f_{1} T_{1,2}^{g} f_{2} = 0
\qquad\text{in norm of } L^{2}(X).
\]
\end{proposition}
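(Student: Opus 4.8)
The plan is to separate the almost periodic weight from the recurrence, reducing first to the unweighted average by an auxiliary compact group extension and then attacking the latter by van der Corput together with the relative weak mixing of $f_{1}$.

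\emph{First} I would remove the weight. By Theorem~\ref{thm:ap} the function $\chi$ factors as $\chi=f'\circ\iota$ through a compactification $\iota\colon G\to K$ with $f'\in C(K)$, and $f'$ is a uniform limit of matrix coefficients of the regular representation of $K$; by linearity and uniform approximation (the averages are bounded by $\|\chi\|_{\infty}\|f_{1}\|_{\infty}\|f_{2}\|_{\infty}$ in $L^{2}$) it suffices to treat $\chi(g)=\langle S^{g}\phi,\psi\rangle_{L^{2}(K)}$, where $S^{g}$ is the rotation $k\mapsto\iota(g)k$. Form the product system $\hat X=X\times K$ with $\hat T_{1}=T_{1}\times S$ and $\hat T_{2}=T_{2}\times\id$; this lies in $\MPS[2]$. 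A direct computation using $S^{g}1=1$ shows that for $F_{1}=f_{1}\otimes\phi$ and $F_{2}=f_{2}\otimes1$ one has $\hat T_{1}^{g}F_{1}\cdot\hat T_{1,2}^{g}F_{2}=(T_{1}^{g}f_{1}\,T_{1,2}^{g}f_{2})\otimes S^{g}\phi$, so that integrating the $K$-coordinate against $\bar\psi$ turns the unweighted average on $\hat X$ into the weighted average on $X$. Since $\hat T_{2}$ acts trivially on $K$ we have $\hat{\IF{2}}=\IF{2}\vee\mathcal B(K)$ and conditional expectations factor through $K$; hence $f_{1}\perp A(X|\IF{2},T_{1})$ forces $F_{1}\perp A(\hat X|\hat{\IF{2}},\hat T_{1})$ by Theorem~\ref{thm:A+W}(2). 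This reduces the proposition to the case $\chi\equiv1$, which I treat on a general system.

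\emph{Next} I would apply van der Corput. Writing $u_{g}=T_{1}^{g}f_{1}\,T_{1,2}^{g}f_{2}=T_{1}^{g}(f_{1}\,T_{2}^{g}f_{2})$ and applying Corollary~\ref{cor:vdC}, the measure-preserving transformation $T_{1}^{-g}$ (together with the identity $hg=gk$ for $k=g^{-1}hg$) brings the inner product into the form
\[
\langle u_{g},u_{hg}\rangle=\int_{X}\big(f_{1}\,\overline{T_{1}^{k}f_{1}}\big)\,T_{2}^{g}\Phi_{k}\,\dif\mu,\qquad \Phi_{k}=f_{2}\,\overline{T_{1}^{k}T_{2}^{k}f_{2}}.
\]
In the model case where $G$ is abelian one has $k=h$, so $f_{1}\overline{T_{1}^{h}f_{1}}$ no longer depends on $g$, and averaging over $g$ by the mean ergodic theorem replaces $T_{2}^{g}\Phi_{h}$ by $\E(\Phi_{h}\mid\IF{2})$; the resulting expression is $\int_{X}\E(f_{1}\overline{T_{1}^{h}f_{1}}\mid\IF{2})\,\E(\Phi_{h}\mid\IF{2})\,\dif\mu$, whose modulus is at most $\|f_{2}\|_{\infty}^{2}\,\|\E(\bar f_{1}\,T_{1}^{h}f_{1}\mid\IF{2})\|_{2}$. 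Averaging over $h$ along $F'$ and using that $f_{1}\perp A(X|\IF{2},T_{1})$ means $f_{1}\in W(X|\IF{2},T_{1})$, Theorem~\ref{thm:A+W}(2) with test function $\bar f_{1}$ gives $\int_{h}\|\E(\bar f_{1}T_{1}^{h}f_{1}\mid\IF{2})\|_{2}^{2}\,\dif F'_{n}(h)\to0$, and Cauchy--Schwarz closes the estimate; since this holds for every \reiter{} sequence, the uniform \cesaro{} limit vanishes.

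\emph{The main obstacle} is that for non-abelian $G$ the shift produces the conjugate $k=g^{-1}hg$ rather than $h$, so that both $f_{1}\overline{T_{1}^{k}f_{1}}$ and $\Phi_{k}$ depend on $g$; the mean ergodic step in $g$ and the weak-mixing average in $h$ are thereby entangled and cannot be carried out in the naive order above. I expect this to be the crux. The way around it is that for each fixed $g$ the conjugated sequence $g^{-1}F'_{n}g$ is again a left \reiter{} sequence, so that the $h$-average is a \reiter{} average in the variable $k$; since $W(X|\IF{2},T_{1})$ is $T_{1}$-invariant and the relevant \cesaro{} limits do not depend on the \reiter{} sequence, the weak-mixing smallness of Theorem~\ref{thm:A+W}(2) holds along every such conjugated sequence, and the dependence on $g$ integrates out. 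Carrying this bookkeeping through the $\inf_{n}\limsup_{m}$ of Corollary~\ref{cor:vdC} is the only genuinely technical point.
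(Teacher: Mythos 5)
Your first step---absorbing the weight into the product system $\hat X=X\times K$ and reducing to $\chi\equiv 1$---is correct, and it handles the weight differently from the paper: the paper never removes the weight, but instead applies van der Corput directly to the vectors $\pi(g)v\otimes u_{g}\in\C^{d}\otimes L^{2}(X)$, using Theorem~\ref{thm:ap}\eqref{ap:matrix-coeff-meas}. (Your product-with-compactification trick is close in spirit to what the paper does later, in the proof of Corollary~\ref{cor:char-factors-sated-weighted}.) The key verification $F_{1}\perp A(\hat X|\hat{I}_{2},\hat T_{1})$ does go through, since for elementary tensors $H=h\otimes\eta$ one has $\E(H\,\hat T_{1}^{g}F_{1}|\hat{I}_{2})=\E(hT_{1}^{g}f_{1}|\IF{2})\otimes(\eta\, S^{g}\phi)$, and such $H$ are dense.

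The genuine problem is in your second step, and it is twofold. First, the ``main obstacle'' you identify is not actually there: it is an artifact of composing the operators $T^{g}$ on functions as a left action. The paper fixes the opposite convention---``measure-preserving actions on $X$ induce anti-actions on the spaces $L^{p}(X)$''---so that $T_{1}^{hg}f=T_{1}^{g}(T_{1}^{h}f)$, and hence
\[
\<u_{g},u_{hg}\>
=\int_{X} f_{1}\,\overline{T_{1}^{h}f_{1}}\cdot T_{2}^{g}\bigl(f_{2}\,\overline{T_{1,2}^{h}f_{2}}\bigr)\dif\mu ,
\]
with no conjugated element $k=g\inv hg$ anywhere: the $h$-dependent factors do not depend on $g$, the mean ergodic theorem in $g$ produces $\E(\,\cdot\,|\IF{2})$ for each fixed $h$, and the weak-mixing average in $h$ closes the estimate. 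In other words, your ``abelian model case'' is already the complete proof for arbitrary amenable $G$, and it is essentially the paper's proof; the same convention is exactly why the paper insists on \emph{anti}homomorphisms in Theorem~\ref{thm:ap}\eqref{ap:matrix-coeff-meas}, which gives $\<\pi(g)v,\pi(hg)v\>=\<v,\pi(h)v\>$ and kills the $g$-dependence of the weight factor in the same way.

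Second, taken on its own terms, your workaround for the conjugation does not close the gap. It is true that for each fixed $g$ the pushforward of $F'_{n}$ under $h\mapsto g\inv hg$ is again a left \reiter{} sequence, so the weak-mixing smallness holds along it; but this convergence is pointwise in $g$ and in general not uniform in $g$, while in Corollary~\ref{cor:vdC} the $g$-average ($\limsup_{m}$ over $F_{m}$) is taken \emph{before} $n\to\infty$ ($\inf_{n}$ outside). At fixed $n$ the variable $g$ ranges over arbitrarily late \reiter{} sets, and for general amenable groups conjugation distorts \reiter{} sets without any uniform control, so ``the dependence on $g$ integrates out'' is precisely the unproved point. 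Worse, your workaround addresses only the weak-mixing half of the entanglement: the mean ergodic step is also blocked, because after substituting $k=g\inv hg$ the measure of the inner $k$-integral depends on $g$, so the $g$-average cannot be pulled inside to form $\E(f_{2}\,\overline{T_{1,2}^{k}f_{2}}|\IF{2})$ at all. Fortunately, as explained above, once you adopt the paper's anti-action convention none of this machinery is needed.
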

\begin{proof}
Fix a left \reiter{} sequence $F$.
By Theorem~\ref{thm:ap} we may assume that $\chi(g)=\<w,\pi(g)v\>$, where $\pi:G\to U(d)$ is a measurable antihomomorphism and $v,w\in\C^{d}$.
In this case it suffices to prove $\Clim_{g} \pi(g)v \otimes u_{g} = 0$ in $\C^{d}\otimes L^{2}(X)$, where $u_{g} := T_{1}^{g} f_{1} T_{1,2}^{g} f_{2}$.
By the van der Corput inequality (Corollary~\ref{cor:vdC}) it suffices to show that
\[
\liminf_{H} \int_{h} \limsup_{N} \int_{g} \<\pi(g) v,\pi(hg) v\> \int u_{g} u_{hg} \dif\mu \dif F_{N}(g) \dif F'_{H}(h) = 0
\]
for a certain two-sided \reiter{} sequence $F'$.
Since $\pi$ is an antihomomorphism this can be written as
\begin{multline*}
\liminf_{H} \int_{h} \limsup_{N} \int_{g} \<v,\pi(h) v\> \int f_{1} T_{2}^{g}f_{2} T_{1}^{h} f_{1} T_{2}^{g} T_{1,2}^{h} f_{2} \dif\mu \dif F_{N}(g) \dif F'_{H}(h)\\
\leq
\|v\|^{2} \liminf_{H} \int_{h} \limsup_{N} \int_{g} \int f_{1} T_{2}^{g}f_{2} T_{1}^{h} f_{1} T_{2}^{g} T_{1,2}^{h} f_{2} \dif\mu \dif F_{N}(g) \dif F'_{H}(h)\\
=
\|v\|^{2} \liminf_{H} \int_{h} \int f_{1} T_{1}^{h} f_{1} \E(f_{2} T_{1,2}^{h} f_{2}|\IF{2}) \dif\mu \dif F'_{H}(h).
\end{multline*}
Since the conditional expectation is an orthogonal projection, this is bounded by
\[
\|v\|^{2} \|f_{2}\|_{\infty}^{2} \liminf_{H} \int_{h} \| \E(f_{1} T_{1}^{h} f_{1} | \IF{2}) \|_{2} \dif F'_{H}(h).
\]
This vanishes by the assumption.
\end{proof}
\begin{corollary}
\label{cor:conv-k=2}
Let $X\in\MPS[2]$, $f_{1},f_{2}\in L^{\infty}(X)$, and let $\chi\in\AP(G)$.
Then
\[
\UClim_{g} \chi(g) T_{1}^{g} f_{1} T_{1,2}^{g} f_{2}
\quad\text{exists in norm of } L^{2}(X).
\]
\end{corollary}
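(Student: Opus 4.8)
The plan is to follow the template of Corollary~\ref{cor:cubic-convergence}, first stripping $f_{1}$ down to the relevant characteristic factor and then reducing to a weighted single-action average. By the splitting $L^{2}(X) = A(X|\IF{2},T_{1})\oplus W(X|\IF{2},T_{1})$ of Theorem~\ref{thm:A+W} together with Proposition~\ref{prop:char-factor-f1}, the component of $f_{1}$ lying in $W(X|\IF{2},T_{1})$ already yields the vanishing (hence convergent) average, so it suffices to treat $f_{1}\in A(X|\IF{2},T_{1})$. Passing to an extension $\pi\colon X'\to X$ does not affect the question: for the lifted functions $f_{i}\circ\pi$ the averages on $X'$ are the images of those on $X$ under the isometric embedding $L^{2}(X)\hookrightarrow L^{2}(X')$, $u\mapsto u\circ\pi$, and in particular they converge upstairs if and only if they converge downstairs. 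Using Theorem~\ref{thm:sated-ext} and Proposition~\ref{prop:KI-in-sated} I would therefore assume $X$ to be $\Inv[2]$-sated and hence magic, so that $A(X|\IF{2},T_{1}) = \IF{1}\vee\IF{2}$. By density and linearity this reduces us to $f_{1}=h_{1}h_{2}$ with $h_{1}$ being $T_{1}$-invariant and $h_{2}$ being $T_{2}$-invariant; the $T_{1}$-invariant factor $h_{1}$ pulls out of the average, and writing $S:=T_{1,2}$ for the diagonal $G$-action it remains to prove that $\UClim_{g}\chi(g)\,(T_{1}^{g}h_{2})\,(S^{g}f_{2})$ exists.

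The base case is a weighted ergodic average for the single action $S$: for $\psi\in\AP(G)$ and a fixed $v\in L^{\infty}(\IF{2})$ the average $v\cdot\UClim_{g}\psi(g)\,S^{g}f_{2}$ converges. Indeed, decomposing $f_{2}$ into its $S$-almost periodic and $S$-weakly mixing parts (Theorem~\ref{thm:A+W} with trivial base) kills the weakly mixing part by the van der Corput inequality (Corollary~\ref{cor:vdC}, the scalar weight $\psi$ being bounded), while on the almost periodic part one has $S^{g}f_{2}=\sum_{j}\rho_{j}(g)w_{j}$ with $\rho_{j}\in\AP(G)$, so that the average becomes $\sum_{j}w_{j}\,\UClim_{g}(\psi\rho_{j})(g)$ and converges by Corollary~\ref{cor:Clim-positive}. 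To bring the original average into this form I would exploit the almost periodic behaviour of the orbit $\{T_{1}^{g}h_{2}\}$: on the $T_{1}$-almost periodic part this orbit is precompact in $L^{2}(\IF{2})$, so that $g\mapsto\chi(g)T_{1}^{g}h_{2}$ is an almost periodic $L^{2}(\IF{2})$-valued weight, and approximating it within its finite-dimensional essential range by combinations $\sum_{i}\psi_{i}(g)v_{i}$ with $\psi_{i}\in\AP(G)$ and $v_{i}\in L^{\infty}(\IF{2})$ (chosen as finitely many orbit points) places us exactly in the base case above.

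The remaining and principal difficulty is the $T_{1}$-weakly mixing part $w$ of $h_{2}$, for which I must show $\UClim_{g}\chi(g)(T_{1}^{g}w)(S^{g}f_{2})=0$. Here the plan is to apply Corollary~\ref{cor:vdC} to $u_{g}:=\chi(g)(T_{1}^{g}w)(S^{g}f_{2})$. The structural point that makes this tractable is the identity $S^{-g}T_{1}^{g}=T_{2}^{-g}$ combined with the fact that $T_{2}$ acts trivially on $\IF{2}$-measurable functions: applying $S^{-g}$ inside the correlation $\langle u_{g},u_{hg}\rangle$ collapses all dependence on the averaging variable $g$ except through $\chi$, reducing the correlation to $\langle\zeta_{h},T_{1}^{h}w\rangle$ with $\zeta_{h}=w\cdot\E(f_{2}\,\overline{S^{h}f_{2}}\mid\IF{2})\in L^{2}(\IF{2})$ ranging in a bounded family. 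The step I expect to be hardest is precisely that these test vectors $\zeta_{h}$ move with $h$, so the relative weak mixing of $w$ cannot be invoked against a fixed vector; closing this gap requires a second van der Corput step (decreasing the complexity of the configuration, in the spirit of a PET induction) to push the moving targets into the weakly mixing null-limit. Once the $h$-average of these correlations is shown to vanish, Corollary~\ref{cor:vdC} yields $\UClim_{g}\chi(g)(T_{1}^{g}w)(S^{g}f_{2})=0$, completing the proof.
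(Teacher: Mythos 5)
Your reduction is on track up to the point where, on the magic extension, $f_{1}=h_{1}h_{2}$ with $h_{i}$ being $T_{i}$-invariant, but there you miss the one identity that finishes the proof: since $h_{2}$ is $T_{2}$-invariant, $T_{1}^{g}h_{2}=T_{1}^{g}T_{2}^{g}h_{2}=T_{1,2}^{g}h_{2}$, so that
\[
\chi(g)\,T_{1}^{g}f_{1}\,T_{1,2}^{g}f_{2}
=
h_{1}\,\chi(g)\,T_{1,2}^{g}(f_{2}h_{2}).
\]
Nothing two-dimensional remains: the average is a weighted average for the single action $T_{1,2}$, i.e.\ exactly your own ``base case''. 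Consequently the entire ``principal difficulty'' you describe --- splitting $h_{2}$ into $T_{1}$-almost periodic and $T_{1}$-weakly mixing parts, the moving test vectors $\zeta_{h}$, and the projected ``second van der Corput step'' in the spirit of PET --- is an artifact of not using the $T_{2}$-invariance of $h_{2}$. (It cannot be a genuine difficulty: the splitting of Theorem~\ref{thm:A+W} is canonical and is preserved by the commuting action $T_{2}$, so the $T_{1}$-weakly mixing component $w$ of $h_{2}$ is again $T_{2}$-invariant and collapses in the same way, $T_{1}^{g}w=T_{1,2}^{g}w$.) Since you explicitly leave that step unexecuted --- you only assert that some further van der Corput/PET argument would be ``required'' --- the proposal as written is not a complete proof: its central, self-identified hardest step is a gap, and it is a gap created by a missed simplification rather than by real mathematics.

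For comparison, after this collapse the paper finishes more economically than your base case: write $\chi(g)=\<w,\pi(g)v\>$ with $\pi:G\to U(d)$ a measurable antihomomorphism (Theorem~\ref{thm:ap}); then $g\mapsto\pi(g)v\otimes T_{1,2}^{g}(f_{2}h_{2})$ is an orbit of the unitary antirepresentation $\pi\otimes T_{1,2}$ on $\C^{d}\otimes L^{2}(X)$, whose \cesaro{} averages converge along every \reiter{} sequence by the mean ergodic theorem, and contracting against $w$ gives the claim. Your alternative treatment of the weighted single-action average (splitting $f_{2}$ into almost periodic and weakly mixing parts for $T_{1,2}$ and invoking Corollaries~\ref{cor:vdC} and~\ref{cor:Clim-positive}) is also workable, but the tensor-product trick makes it unnecessary. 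If you insert the identity above and then apply either version of the single-action result, you recover a complete proof essentially identical to the paper's.
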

\begin{proof}
By Theorem~\ref{thm:sated-ext} and Proposition~\ref{prop:KI-in-sated} we may assume that $X$ is magic.
By Proposition~\ref{prop:char-factor-f1} the above limit vanishes if $f_{1}\perp \IF{1}\vee \IF{2}$.
Hence by density and linearity it suffices to consider $f_{1}=h_{1}h_{2}$, where $h_{i}$ is $T_{i}$-invariant.
In this case we have
\[
\chi(g) T_{1}^{g} f_{1} T_{1,2}^{g} f_{2}
=
h_{1} \chi(g) T_{1,2}^{g} (f_{2}h_{2}).
\]
By Theorem~\ref{thm:ap} we may assume that $\chi(g)=\<w,\pi(g)v\>$ for a measurable antihomomorphism $\pi:G\to U(d)$.
The conclusion follows because the map $g\mapsto \pi(g)v \otimes T_{1,2}^{g}(f_{2}h_{2})$ converges in the uniform \cesaro{} sense in $\C^{d}\otimes L^{2}(X)$ by the mean ergodic theorem applied to the antirepresentation $\pi\otimes T_{1,2}$.
\end{proof}

Here and later write $\KF{\epsilon}=\KF{}(T_{\epsilon})=A(X|\mathrm{trivial},T_{\epsilon})$ for the factor spanned by the finite-dimensional $T_{\epsilon}$-invariant subspaces of $L^{2}(X)$ for $\epsilon\subseteq\{1,2\}$.
These factors are used as building blocks for characteristic factors for weighted Furstenberg averages.
\begin{corollary}
\label{cor:char-factors-sated-weighted}
Let $X\in\MPS[2]$, $f_{0},f_{1},f_{2}\in L^{\infty}(X)$, and $\chi\in\AP(G)$.
Suppose that $X$ is magic.
Then
\[
\UClim_{g} \chi(g) f_{0} T_{1}^{g}f_{1} T_{1,2}^{g} f_{2}
=
\UClim_{g} \chi(g) \E(f_{0}|\IF{1} \vee \KF{1,2}) T_{1}^{g}\E(f_{1}|\IF{1} \vee \IF{2}) T_{1,2}^{g} \E(f_{2}|\IF{2} \vee \KF{1,2}).
\]
\end{corollary}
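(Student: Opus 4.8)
The plan is to pass from $f_{0},f_{1},f_{2}$ to the three conditional expectations one factor at a time, checking at each step that the uniform \cesaro{} limit is unaffected; boundedness of all functions, together with linearity and density, then reduces matters to functions of a convenient product form. I would first remove $f_{1}$. Since $X$ is magic, by definition $A(X|\IF{2},T_{1})=\IF{1}\vee\IF{2}$, so Proposition~\ref{prop:char-factor-f1} gives $\UClim_{g}\chi(g)\,T_{1}^{g}f_{1}\,T_{1,2}^{g}f_{2}=0$ whenever $f_{1}\perp\IF{1}\vee\IF{2}$. As $f_{0}$ does not depend on $g$, multiplication by $f_{0}\in L^{\infty}(X)$ is a bounded operator and hence commutes with the limit, so the full expression vanishes as well. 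By linearity I may therefore replace $f_{1}$ by $\E(f_{1}|\IF{1}\vee\IF{2})$, and by density I may assume $f_{1}=h_{1}h_{2}$ with each $h_{i}$ being $T_{i}$-invariant. Using $T_{1}^{g}h_{1}=h_{1}$ and $T_{2}^{g}h_{2}=h_{2}$ I rewrite $T_{1}^{g}f_{1}\,T_{1,2}^{g}f_{2}=h_{1}\,T_{1,2}^{g}(h_{2}f_{2})$, so the integrand becomes $f_{0}h_{1}\,\chi(g)\,T_{1,2}^{g}(h_{2}f_{2})$.

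Next I would remove $f_{2}$. By Theorem~\ref{thm:ap} I may assume $\chi(g)=\<w,\pi(g)v\>$ for a measurable finite-dimensional antihomomorphism $\pi\colon G\to U(d)$. As in the proof of Corollary~\ref{cor:conv-k=2}, the mean ergodic theorem for the antirepresentation $\pi\otimes T_{1,2}$ on $\C^{d}\otimes L^{2}(X)$ shows that for any $\psi\in L^{\infty}(X)$ the limit $\UClim_{g}\chi(g)\,T_{1,2}^{g}\psi$ equals the image under $\<w,\cdot\>\otimes\id$ of the orthogonal projection of $v\otimes\psi$ onto the $(\pi\otimes T_{1,2})$-invariant vectors. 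Because $\pi$ is finite-dimensional, the $L^{2}(X)$-components of any such invariant vector span a finite-dimensional $T_{1,2}$-invariant subspace and hence lie in $\KF{1,2}$ (using the last assertion of Theorem~\ref{thm:A+W}); thus this limit is $\KF{1,2}$-measurable and depends on $\psi$ only through $\E(\psi|\KF{1,2})$. Since $h_{2}$ is $\IF{2}$-measurable, conditioning first onto $\IF{2}\vee\KF{1,2}$ and using the tower property gives $\E\bigl(h_{2}(f_{2}-\E(f_{2}|\IF{2}\vee\KF{1,2}))\bigm|\KF{1,2}\bigr)=0$, so replacing $f_{2}$ by $\E(f_{2}|\IF{2}\vee\KF{1,2})$ leaves the limit unchanged in $L^{2}(X)$.

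After these two reductions both sides of the asserted identity take the form $f_{0}\cdot L$ and $\E(f_{0}|\IF{1}\vee\KF{1,2})\cdot L$, where $L=h_{1}\,\UClim_{g}\chi(g)\,T_{1,2}^{g}\bigl(h_{2}\,\E(f_{2}|\IF{2}\vee\KF{1,2})\bigr)$ no longer involves $f_{0}$. By the previous paragraph the \cesaro{} limit appearing here is $\KF{1,2}$-measurable, and $h_{1}$ is $\IF{1}$-measurable, so $L$ is measurable with respect to $\IF{1}\vee\KF{1,2}$. The remaining step, replacing the unshifted factor $f_{0}$ by $\E(f_{0}|\IF{1}\vee\KF{1,2})$, is where I expect the real difficulty to lie: unlike $f_{1}$ and $f_{2}$, the function $f_{0}$ is fixed by the averaging, so there is no van der Corput or mean-ergodic cancellation to invoke, and the reduction has to be extracted instead from the $\IF{1}\vee\KF{1,2}$-measurability of the common limit $L$. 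I would therefore isolate that measurability as the crux of the argument and deduce the $f_{0}$-replacement from it, verifying carefully that, because $L$ is $\IF{1}\vee\KF{1,2}$-measurable, the limit sees $f_{0}$ only through its conditional expectation onto $\IF{1}\vee\KF{1,2}$.
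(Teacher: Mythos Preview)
Your reductions for $f_{1}$ and $f_{2}$ are correct and essentially follow the paper's proof; your treatment of $f_{2}$ via the mean ergodic theorem for the antirepresentation $\pi\otimes T_{1,2}$ is a clean alternative to the paper's compactification argument and reaches the same conclusion.

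The genuine gap is in your $f_{0}$ step. From the $\IF{1}\vee\KF{1,2}$-measurability of $L$ you cannot conclude $f_{0}\,L = \E(f_{0}\mid\IF{1}\vee\KF{1,2})\,L$ in $L^{2}(X)$; measurability of $L$ only gives equality of the \emph{integrals} $\int f_{0}\,L\,\dif\mu = \int \E(f_{0}\mid\IF{1}\vee\KF{1,2})\,L\,\dif\mu$. The $L^{2}$ identity is in fact false in general: take $T_{1}$ weakly mixing and $T_{2}=\id$ (this system is magic, with $\IF{1}\vee\KF{1,2}$ trivial) and $\chi\equiv 1$; then the left-hand side of the corollary is $f_{0}\int f_{1}f_{2}\,\dif\mu$ while the right-hand side is $\bigl(\int f_{0}\,\dif\mu\bigr)\bigl(\int f_{1}f_{2}\,\dif\mu\bigr)$.

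The paper's ``can be handled similarly'' for $f_{0}$ has the same issue if read as an $L^{2}$ statement; it only goes through after integrating, via the symmetry
\[
\int (f_{0}h_{1})\,T_{1,2}^{g}(f_{2}h_{2})\,\dif\mu = \int T_{1,2}^{g\inv}(f_{0}h_{1})\,(f_{2}h_{2})\,\dif\mu,
\]
which swaps the roles of $f_{0}h_{1}$ and $f_{2}h_{2}$ and lets one rerun the $f_{2}$-argument. Since the corollary is only ever applied in integrated form (in the proof of Theorem~\ref{thm:recurrence}), your measurability observation \emph{does} justify what is actually needed --- but you should make explicit that the replacement of $f_{0}$ holds only after taking $\int\cdots\,\dif\mu$, not as an equality of $L^{2}$ functions.
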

\begin{proof}
By Corollary~\ref{cor:conv-k=2} the uniform \cesaro{} limits on both sides exist in $L^{2}(X)$.
By Proposition~\ref{prop:char-factor-f1} we may assume that $f_{1}$ is $A(X|\IF{1},T_{2})$-measurable, and hence $\IF{1}\vee \IF{2}$-measurable by definition of a magic system.
By density and linearity it suffices to consider $f_{1}=h_{1}h_{2}$, where $h_{j}$ is $T_{j}$-invariant.
In this case we have
\[
\chi(g) f_{0} T_{1}^{g}(h_{1}h_{2}) T_{1,2}^{g} f_{2}
=
\chi(g) f_{0} h_{1} T_{1,2}^{g} (f_{2}h_{2}).
\]
Suppose now that $f_{2} \perp \IF{2} \vee \KF{1,2}$, so that $f_{2}h_{2} \perp\KF{1,2}$ and fix a left \reiter{} sequence $F$.
By Theorem~\ref{thm:ap} we have $\chi(g)=\kappa(\iota(g))$ for some compactification $\iota:G\to K$ and some $\kappa\in C(K)$.
We have a $G$-action on $K$ by left translation by $\iota$.
By Theorem~\ref{thm:A+W} we obtain $\kappa\otimes f_{2}h_{2}\perp A(K\times X|\mathrm{trivial},\iota\times T_{1,2})$.
In particular,
\[
\Clim_{g} \kappa(\iota(g)\cdot)T_{1,2}^{g}(f_{2}h_{2}) = 0
\]
in $L^{2}(K\times X)$.
Passing to a subsequence of our \reiter{} sequence we obtain
\[
\Clim_{g} \kappa(\iota(g) k)T_{1,2}^{g}(f_{2}h_{2}) = 0
\]
in $L^{2}(X)$ for a.e.\ $k\in K$.
By uniform continuity the same actually holds for every $k\in K$, and substituting $k=\id_{K}$ we obtain $\Clim_{g} \chi(g) T_{1,2}^{g} (f_{2}h_{2}) = 0$.
Since this limit does not depend on the subsequence, a subsubsequence argument shows that $\UClim_{g} \chi(g) T_{1,2}^{g} (f_{2}h_{2}) = 0$.

The remaining case $f_{0} \perp \IF{1} \vee \KF{1,2}$ can be handled similarly.
\end{proof}

\section{An almost periodic correlation function and the recurrence theorem}
Before embarking on the proof of our recurrence theorem we state two lemmas that facilitate calculation of integrals.
The first of them concerns relative independence, while the second deals with a certain trilinear form.
\begin{lemma}
\label{lem:I1-I2-indep}
Suppose that $X\in\MPS[2]$.
Then the $\sigma$-algebras $\IF{1}, \IF{2}$ are relatively independent over $\IF{1}\wedge \IF{2}$.
\end{lemma}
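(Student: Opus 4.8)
The plan is to unwind the definition of relative independence and reduce the statement to a single identity between conditional expectations. Set $\mathcal{B} := \IF{1}\wedge\IF{2}$, which modulo null sets is the intersection $\IF{1}\cap\IF{2}$, namely the $\sigma$-algebra of sets invariant under \emph{both} $T_{1}$ and $T_{2}$. By the stated definition it suffices to show that every $f\in L^{2}(\IF{1})$ with $\E(f\mid\mathcal{B})=0$ satisfies $\E(f\mid\IF{2})=0$. In fact I will prove the stronger identity
\[
\E(f\mid\IF{2}) = \E(f\mid\mathcal{B})
\qquad\text{for every $T_{1}$-invariant } f\in L^{2}(X),
\]
from which the required implication is immediate. (Since relative independence is symmetric in the two algebras, one could equally well argue with $\IF{2}$-measurable functions.)

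The key observation is that, because $T_{1}$ and $T_{2}$ commute, each transformation $T_{1}^{g}$ maps the $\sigma$-algebra $\IF{2}$ onto itself: if $A$ is $T_{2}$-invariant then $T_{2}^{h} T_{1}^{g} A = T_{1}^{g} T_{2}^{h} A = T_{1}^{g} A$ for all $h\in G$, so $T_{1}^{g} A$ is again $T_{2}$-invariant. Consequently $T_{1}^{g}$ commutes with the conditional expectation $\E(\cdot\mid\IF{2})$, and hence for $T_{1}$-invariant $f$ we get
\[
T_{1}^{g}\,\E(f\mid\IF{2}) = \E(T_{1}^{g} f\mid\IF{2}) = \E(f\mid\IF{2}).
\]
Thus $\E(f\mid\IF{2})$ is itself $T_{1}$-invariant, i.e.\ $\IF{1}$-measurable. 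Being simultaneously $\IF{1}$- and $\IF{2}$-measurable, it is measurable with respect to $\mathcal{B}=\IF{1}\wedge\IF{2}$, and the tower property then yields $\E(f\mid\mathcal{B}) = \E(\E(f\mid\IF{2})\mid\mathcal{B}) = \E(f\mid\IF{2})$, as claimed.

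With the displayed identity in hand the lemma follows at once: if $f\in L^{2}(\IF{1})$ is orthogonal to $\mathcal{B}$, that is $\E(f\mid\mathcal{B})=0$, then $\E(f\mid\IF{2})=0$, i.e.\ $f\perp\IF{2}$. The only point requiring a little care is the passage from ``measurable with respect to both $\IF{1}$ and $\IF{2}$'' to ``measurable with respect to $\IF{1}\wedge\IF{2}$'', where one must interpret the meet as the intersection of the two $\sigma$-algebras up to null sets; beyond this the argument uses nothing but the commuting structure of the two actions, and in particular neither ergodicity nor amenability of $G$ enters. I do not expect any genuine obstacle here — the content is entirely the commutation $T_{1}^{g}\IF{2}=\IF{2}$.
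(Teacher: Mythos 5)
Your proof is correct, but it follows a genuinely different route from the paper's. The paper fixes a left \reiter{} sequence and invokes the mean ergodic theorem for the $G\times G$-action $(g,h)\mapsto T_{1}^{g}T_{2}^{h}$: since $f_{1}$ is $T_{1}$-invariant and $f_{2}$ is $T_{2}$-invariant, the double average of $T_{1}^{g}T_{2}^{h}(f_{1}f_{2})$ splits as $\int T_{2}^{h}f_{1}\,\dif F_{n}(h)\int T_{1}^{g}f_{2}\,\dif F_{n}(g)$, giving $\E(f_{1}f_{2}|\IF{1}\wedge\IF{2})=\E(f_{1}|\IF{2})\E(f_{2}|\IF{1})$, and the commutativity of the two conditional expectation operators then converts the right-hand side into $\E(f_{1}|\IF{1}\wedge\IF{2})\E(f_{2}|\IF{1}\wedge\IF{2})$ --- the product-formula characterization of relative independence. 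You instead verify the paper's definition directly via the operator identity $\E(f|\IF{2})=\E(f|\IF{1}\wedge\IF{2})$ for $T_{1}$-invariant $f$, deduced from the observation that $T_{1}^{g}$ preserves $\IF{2}$ and hence commutes with $\E(\cdot|\IF{2})$. Your argument uses neither the ergodic theorem nor amenability, so it works for commuting actions of arbitrary groups; moreover it in effect supplies a proof of the fact the paper asserts without proof (commuting actions yield commuting conditional expectations: $\E(\cdot|\IF{1})$ is a strong limit of averages of the operators $T_{1}^{g}$, each of which, by your observation, commutes with $\E(\cdot|\IF{2})$). What the paper's computation buys is the explicit product formula, which is exactly the form reused right after the lemma for the pairs $\IF{1},\IF{1,2}$ and $\IF{2},\IF{1,2}$ and in Lemma~\ref{lem:ch3}, and it keeps the argument uniform with the ergodic-averaging toolkit used throughout. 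Finally, the caveat you flag --- passing from ``measurable with respect to both $\IF{1}$ and $\IF{2}$'' to ``$\IF{1}\wedge\IF{2}$-measurable'' --- is indeed the one point requiring the correct interpretation: it holds here because invariant $\sigma$-algebras contain all null sets (equivalently, a function a.e.\ invariant under $T_{1}$ and under $T_{2}$ is a.e.\ invariant under the whole $G\times G$-action, whose invariant $\sigma$-algebra is $\IF{1}\wedge\IF{2}$), whereas it can fail for general sub-$\sigma$-algebras, so flagging it was the right instinct.
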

\begin{proof}
Let $f_{i}$ be $T_{i}$-invariant and fix a left \reiter{} sequence $F$.
By the mean ergodic theorem for the $G\times G$-action $(T_{1},T_{2})$ we have
\begin{multline*}
\E(f_{1}f_{2}|\IF{1}\wedge \IF{2})
= \lim_{n} \iint T_{1}^{g} T_{2}^{h} f_{1}f_{2} \dif F_{n}(g) \dif F_{n}(h)\\
= \lim_{n} \int T_{2}^{h} f_{1} \dif F_{n}(h) \int T_{1}^{g} f_{2} \dif F_{n}(g)
= \E(f_{1}|\IF{2}) \E(f_{2}|\IF{1}).
\end{multline*}
Since $T_{1}$ and $T_{2}$ commute, the conditional expectation operators onto $\IF{1}$ and $\IF{2}$ commute as well, so the above conditional expectations equal those on $\IF{1}\wedge\IF{2}$.
\end{proof}
Similarly, one can show relative independence over $\IF{1}\wedge \IF{2}$ for the pairs $\IF{1},\IF{1,2}$ and $\IF{2},\IF{1,2}$.
For instance for the first pair we obtain
\begin{multline*}
\E(f_{1}f_{1,2} | \IF{1}\wedge\IF{2} )
= \lim_{n,m} \iint T_{1}^{g} T_{2}^{h} f_{1}f_{1,2} \dif F_{n}(g) \dif F_{m}(h)
= \lim_{n,m} \int T_{2}^{h} f_{1} \int T_{1}^{g}T_{2}^{h} T_{1,2}^{h^{-1}} f_{1,2} \dif F_{m}(g) \dif F_{n}(h)\\
= \lim_{n,m} \int T_{2}^{h} f_{1} \int T_{1}^{h^{-1}g} f_{1,2} \dif F_{m}(g) \dif F_{n}(h)
= \lim_{n} \int T_{2}^{h} f_{1} \E(f_{1,2} | \IF{1}) \dif F_{n}(h)
= \E(f_{1}|\IF{2}) \E(f_{1,2} | \IF{1}).
\end{multline*}

\begin{definition}
The subcategory $\eMPS$ of $\MPS$ consists of ergodic systems, that is, systems for which the $\sigma$-algebra $\IF{1}\wedge\dots\wedge \IF{k}$ is trivial.
\end{definition}
\begin{lemma}
\label{lem:ch3}
Let $X\in\eMPS[2]$ and $f_{0},f_{1},f_{2}\in L^{\infty}(X)$.
Suppose that $f_{0}\in L^{\infty}(\KF{1,2})$, $f_{1}\in L^{\infty}(\IF{1})$, and $f_{2}\in L^{\infty}(\IF{2})$.
Then
\[
\int f_{0} f_{1} f_{2} \dif\mu = \int f_{0} \E(f_{1}|\IF{1}\wedge\KF{2}) \E(f_{2}|\IF{2}\wedge\KF{1}) \dif\mu
\]
\end{lemma}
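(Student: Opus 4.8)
The plan is to condition everything onto the factor $\IF{1}\vee\IF{2}$ and to exploit that, in an ergodic system, this factor is an \emph{independent product} to which the tensor formula of Theorem~\ref{thm:A+W} applies. Since $f_{1}f_{2}$ is $\IF{1}\vee\IF{2}$-measurable, I would immediately replace $f_{0}$ by $F:=\E(f_{0}\mid\IF{1}\vee\IF{2})$, so that $\int f_{0}f_{1}f_{2}\,\dif\mu=\int F f_{1}f_{2}\,\dif\mu$. First I would record that, because $X\in\eMPS[2]$, the $\sigma$-algebra $\IF{1}\wedge\IF{2}$ is trivial, so Lemma~\ref{lem:I1-I2-indep} says $\IF{1}$ and $\IF{2}$ are independent. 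Hence $(\IF{1}\vee\IF{2},\mu)$ is the independent product of $(\IF{1},\mu)$ and $(\IF{2},\mu)$, and since $T_{1}$ acts trivially on $\IF{1}$ and $T_{2}$ acts trivially on $\IF{2}$, the diagonal action restricts to $T_{1,2}=T_{2}$ on $\IF{1}$ and to $T_{1,2}=T_{1}$ on $\IF{2}$; thus $(\IF{1}\vee\IF{2},T_{1,2})$ is exactly the product system $(\IF{1},T_{2})\times(\IF{2},T_{1})$.

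The crux is to locate $F$ inside this product. Since $f_{0}\in\KF{1,2}=A(X\mid\mathrm{trivial},T_{1,2})$ has a precompact $T_{1,2}$-orbit, and conditional expectation onto the $T_{1,2}$-invariant factor $\IF{1}\vee\IF{2}$ is a contraction commuting with $T_{1,2}$, the orbit $\{T_{1,2}^{g}F\}$ is again precompact, that is, $F\in A(\IF{1}\vee\IF{2}\mid\mathrm{trivial},T_{1,2})$. I then apply the tensor formula of Theorem~\ref{thm:A+W} to the product system above, with $Y$ the trivial factor:
\[
A(\IF{1}\vee\IF{2}\mid\mathrm{trivial},T_{1,2})
= A(\IF{1}\mid\mathrm{trivial},T_{2})\otimes A(\IF{2}\mid\mathrm{trivial},T_{1})
= (\IF{1}\wedge\KF{2})\otimes(\IF{2}\wedge\KF{1}),
\]
where the last equality holds because the $T_{2}$-Kronecker factor of $\IF{1}$ is precisely $\IF{1}\wedge\KF{2}$: a finite-dimensional $T_{2}$-invariant subspace of $L^{2}(\IF{1})$ is one of $L^{2}(X)$ consisting of $T_{1}$-invariant functions, and conversely the $T_{2}$-commuting projection onto $\IF{1}$ carries such subspaces of $L^{2}(X)$ into $L^{2}(\IF{1})$; symmetrically for the other factor. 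Consequently $F$ is measurable with respect to $(\IF{1}\wedge\KF{2})\vee(\IF{2}\wedge\KF{1})$, and by independence it lies in the Hilbert-space tensor product $L^{2}(\IF{1}\wedge\KF{2})\otimes L^{2}(\IF{2}\wedge\KF{1})$.

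To conclude, I would write $f_{1}=\E(f_{1}\mid\IF{1}\wedge\KF{2})+r_{1}$ and $f_{2}=\E(f_{2}\mid\IF{2}\wedge\KF{1})+r_{2}$ with $r_{1}\perp\IF{1}\wedge\KF{2}$, $r_{1}\in L^{\infty}(\IF{1})$, and $r_{2}\perp\IF{2}\wedge\KF{1}$, $r_{2}\in L^{\infty}(\IF{2})$, and expand $\int Ff_{1}f_{2}\,\dif\mu$. Approximating $F$ by finite sums $\sum_{k}a_{k}b_{k}$ with $a_{k}\in L^{\infty}(\IF{1}\wedge\KF{2})$ and $b_{k}\in L^{\infty}(\IF{2}\wedge\KF{1})$, and using the independence of $\IF{1}$ and $\IF{2}$ to factor each resulting integral as $\int a_{k}(\cdots)\,\dif\mu\cdot\int b_{k}(\cdots)\,\dif\mu$, every term containing $r_{1}$ or $r_{2}$ vanishes because $\int a_{k}r_{1}\,\dif\mu=0=\int b_{k}r_{2}\,\dif\mu$. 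Only the term carrying both conditional expectations survives, which reassembles to $\int f_{0}\,\E(f_{1}\mid\IF{1}\wedge\KF{2})\,\E(f_{2}\mid\IF{2}\wedge\KF{1})\,\dif\mu$, the desired identity. I expect the genuine obstacle to be the middle step: confirming that $\IF{1}\vee\IF{2}$ really is the product system $(\IF{1},T_{2})\times(\IF{2},T_{1})$ so that the tensor formula of Theorem~\ref{thm:A+W} is applicable, together with the transfer of almost periodicity from $f_{0}$ to $F$. Once these are in place, the remainder is routine bookkeeping with orthogonality, the only secondary point needing care being the identification $A(\IF{1}\mid\mathrm{trivial},T_{2})=\IF{1}\wedge\KF{2}$.
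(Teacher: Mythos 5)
Your proposal is correct and follows essentially the same route as the paper's proof: condition $f_{0}$ onto $\IF{1}\vee\IF{2}$, transfer almost periodicity to the conditioned function, use ergodicity plus Lemma~\ref{lem:I1-I2-indep} to identify $(\IF{1}\vee\IF{2},T_{1,2})$ with the product system $(\IF{1},T_{2})\times(\IF{2},T_{1})$, and apply the tensor formula of Theorem~\ref{thm:A+W} to split $A(\IF{1}\vee\IF{2}|\mathrm{trivial},T_{1,2})=(\IF{1}\wedge\KF{2})\otimes(\IF{2}\wedge\KF{1})$ before concluding by orthogonality. The only cosmetic differences are that you phrase the transfer step via precompact orbits where the paper argues that conditional expectation maps finite-dimensional $T_{1,2}$-invariant subspaces to finite-dimensional $T_{1,2}$-invariant subspaces, and that you spell out the identification $A(\IF{1}|\mathrm{trivial},T_{2})=\IF{1}\wedge\KF{2}$ and the final orthogonality bookkeeping which the paper leaves implicit.
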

\begin{proof}
We can clearly replace $f_{0}$ by $\E(f_{0}|\IF{1}\vee\IF{2})$ on both sides.
Since $\IF{1}\vee \IF{2}$ is $T_{1,2}$-invariant, the conditional expectation $\E(\cdot,\IF{1}\vee\IF{2})$ commutes with $T_{1,2}$.
Therefore this conditional expectation maps finite-dimensional $T_{1,2}$-invariant subspaces to finite-dimensional $T_{1,2}$-invariant subspaces.
It follows that $\E(f_{0}|\IF{1}\vee \IF{2})$ is $\KF{1,2}$-measurable, so we may assume that the function $f_{0}$ is measurable with respect to the $\sigma$-algebra $(\IF{1}\vee\IF{2})\wedge\KF{1,2} = A(\IF{1}\vee\IF{2}|\mathrm{trivial},T_{1,2})$.

Since $\IF{1}$ and $\IF{2}$ are independent by Lemma~\ref{lem:I1-I2-indep} we have
\[
(X,\IF{1}\vee \IF{2},T_{1,2})
\cong
(X,\IF{1},T_{1,2}) \times (X,\IF{2},T_{1,2})
=
(X,\IF{1},T_{2}) \times (X,\IF{2},T_{1}),
\]
so that
\[
A(\IF{1}\vee\IF{2}|\mathrm{trivial},T_{1,2})
=
A(\IF{1}|\mathrm{trivial},T_{2}) \otimes A(\IF{2}|\mathrm{trivial},T_{1})
\]
by Theorem~\ref{thm:A+W}.
Therefore we obtain $\int f_{0}f_{1}f_{2} \dif\mu=0$ if $f_{1}\perp \IF{1}\wedge\KF{2}=A(\IF{1}|\mathrm{trivial},T_{2})$ or $f_{2}\perp \IF{2}\wedge\KF{1}=A(\IF{2}|\mathrm{trivial},T_{1})$.
\end{proof}
The next result is central to our approach of establishing a lower bound for weighted ergodic averages.
This is the place where the almost periodic function that will be used to construct the appropriate weight first arises.
\begin{lemma}
\label{lem:approx-by-matrix-coeff}
Let $X\in\eMPS[2]$ and $f_{0},f_{1},f_{2}\in L^{\infty}(X)$.
Suppose $f_{0} \in L^{\infty}(\IF{1}\vee \KF{1,2})$, $f_{1}\in L^{\infty}(\IF{1}\vee \IF{2})$, and $f_{2}\in L^{\infty}(\IF{2}\vee \KF{1,2})$.
Then the function
\[
g\mapsto \int f_{0} T_{1}^{g} f_{1} T_{1,2}^{g} f_{2} \dif\mu
\]
is almost periodic.
\end{lemma}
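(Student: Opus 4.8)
The plan is to rewrite the correlation function as a finite sum of matrix coefficients of finite\nobreakdash-dimensional unitary $T_{1,2}$\nobreakdash-(anti)representations and then invoke Theorem~\ref{thm:ap} together with the fact that $\AP(G)$ is a closed conjugation\nobreakdash-invariant subalgebra. \emph{First}, since $|\int f_{0}T_{1}^{g}f_{1}T_{1,2}^{g}f_{2}\dif\mu|\leq\|f_{0}\|_{1}\|f_{1}\|_{\infty}\|f_{2}\|_{\infty}$ (and cyclic permutations) uniformly in $g$, the assignment $(f_{0},f_{1},f_{2})\mapsto(g\mapsto\int f_{0}T_{1}^{g}f_{1}T_{1,2}^{g}f_{2}\dif\mu)$ is multilinear and continuous in each variable for the $L^{1}$\nobreakdash-topology, the others kept bounded. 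As bounded products are dense in each of the joins $\IF{1}\vee\KF{1,2}$, $\IF{1}\vee\IF{2}$, $\IF{2}\vee\KF{1,2}$, it therefore suffices to treat generators $f_{0}=pk$, $f_{1}=h_{1}h_{2}$, $f_{2}=sk'$ with $p,h_{1}\in L^{\infty}(\IF{1})$, $s,h_{2}\in L^{\infty}(\IF{2})$, and $k,k'\in L^{\infty}(\KF{1,2})$. Ergodicity of $X$ will enter only through Lemma~\ref{lem:ch3}.

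\emph{Second}, I would collapse the two actions on the $T_{2}$\nobreakdash-invariant factors. Since $h_{2}$ and $s$ are $T_{2}$\nobreakdash-invariant, $T_{1}^{g}h_{2}=T_{1,2}^{g}h_{2}$ and $T_{1,2}^{g}s=T_{1}^{g}s$, so the integrand of the generator term equals
\[
(ph_{1})\cdot T_{1}^{g}(sh_{2})\cdot(k\,T_{1,2}^{g}k'),
\]
whose three factors are measurable with respect to $\IF{1}$, $\IF{2}$ and $\KF{1,2}$ respectively. Here I use that $\IF{2}$ and $\KF{1,2}$ are invariant under both $T_{1}$ and $T_{1,2}$, the latter because $T_{1},T_{2}$ commute with $T_{1,2}$ and hence permute the finite\nobreakdash-dimensional $T_{1,2}$\nobreakdash-invariant subspaces.

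\emph{Third}, which is the heart of the matter, I would apply Lemma~\ref{lem:ch3} for each fixed $g$ with $f_{0}^{\mathrm{ch}}=k\,T_{1,2}^{g}k'\in L^{\infty}(\KF{1,2})$, $f_{1}^{\mathrm{ch}}=ph_{1}\in L^{\infty}(\IF{1})$ and $f_{2}^{\mathrm{ch}}=T_{1}^{g}(sh_{2})\in L^{\infty}(\IF{2})$. This replaces the two invariant factors by their conditional expectations $\rho:=\E(ph_{1}\mid\IF{1}\wedge\KF{2})$ and $\sigma:=\E(sh_{2}\mid\IF{2}\wedge\KF{1})$; since the projection onto $\IF{2}\wedge\KF{1}$ commutes with $T_{1}$, the generator term becomes $\int (k\,T_{1,2}^{g}k')\,\rho\,(T_{1}^{g}\sigma)\dif\mu$. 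The key observations are that $\rho,\sigma\in L^{\infty}(\KF{1,2})$ and that $T_{1}^{g}\sigma=T_{1,2}^{g}\sigma$: indeed a $T_{1}$\nobreakdash-invariant function with precompact $T_{2}$\nobreakdash-orbit has precompact $T_{1,2}$\nobreakdash-orbit, because $T_{1,2}$ acts as $T_{2}$ on $\IF{1}$, so $\IF{1}\wedge\KF{2}\subseteq\KF{1,2}$, and symmetrically $\IF{2}\wedge\KF{1}\subseteq\KF{1,2}$, while $T_{1}^{g}\sigma=T_{1,2}^{g}\sigma$ because $\sigma$ is $T_{2}$\nobreakdash-invariant.

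\emph{Finally}, every factor now lies in $\KF{1,2}$ and the whole $g$\nobreakdash-dependence is carried by $T_{1,2}^{g}k'$ and $T_{1,2}^{g}\sigma$. Approximating $k'$ and $\sigma$ in $L^{2}$ by elements of finite\nobreakdash-dimensional $T_{1,2}$\nobreakdash-invariant subspaces and expanding in orthonormal bases $(e_{\alpha})$, $(e'_{\beta})$ of these subspaces, the generator term becomes a finite sum
\[
\sum_{\alpha,\beta}\langle T_{1,2}^{g}k',e_{\alpha}\rangle\,\langle T_{1,2}^{g}\sigma,e'_{\beta}\rangle\int k\,e_{\alpha}\,\rho\,e'_{\beta}\dif\mu,
\]
that is, a matrix coefficient of a finite\nobreakdash-dimensional unitary $T_{1,2}$\nobreakdash-antirepresentation (the tensor product of the two), hence almost periodic by Theorem~\ref{thm:ap}. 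Summing the finitely many generator terms and passing to the uniform limit keeps us inside the closed algebra $\AP(G)$, which proves the claim. The step requiring the most care is the third one: one must verify that the pieces produced by Lemma~\ref{lem:ch3} genuinely land in $\KF{1,2}$ and evolve within finite\nobreakdash-dimensional $T_{1,2}$\nobreakdash-invariant subspaces, since this is exactly what eliminates the a priori infinite\nobreakdash-dimensional $T_{1}$\nobreakdash- and $T_{2}$\nobreakdash-invariant directions and exposes the matrix\nobreakdash-coefficient structure. The density and interchange\nobreakdash-of\nobreakdash-limit arguments of the first step are routine but must be performed in $L^{2}$ (equivalently $L^{1}$) so that convergence is uniform in $g$.
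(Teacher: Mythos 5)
Your proof is correct and follows essentially the same route as the paper's: the same reduction by multilinearity and density to products of invariant and compact functions, the same collapse of the integrand using $T_{2}$-invariance, the same pivotal application of Lemma~\ref{lem:ch3} (together with the fact that $\E(\cdot|\IF{2}\wedge\KF{1})$ commutes with $T_{1}$), and the same conclusion by expanding in finite-dimensional invariant subspaces to exhibit a matrix coefficient, then invoking Theorem~\ref{thm:ap} and closedness of $\AP(G)$ under uniform limits. The only (harmless, and correct) deviation is that you push $\sigma=\E(sh_{2}|\IF{2}\wedge\KF{1})$ into $\KF{1,2}$ via the inclusion $\IF{2}\wedge\KF{1}\subseteq\KF{1,2}$ and tensor two $T_{1,2}$-antirepresentations, whereas the paper keeps $h_{1}\in L^{\infty}(\KF{1})$ and tensors a $T_{1}$-antirepresentation with a $T_{1,2}$-antirepresentation.
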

\begin{proof}
By density and linearity we may assume that $f_{0}=r_{1}r_{1,2}$, $f_{1}=s_{1}s_{2}$, $f_{2}=t_{2}t_{1,2}$, where $r_{1},s_{1}\in L^{\infty}(\IF{1})$, $s_{2},t_{2}\in L^{\infty}(\IF{2})$, and $r_{1,2},t_{1,2}\in L^{\infty}(\KF{1,2})$.
We have
\[
\int r_{1}r_{1,2} T_{1}^{g}(s_{1}s_{2}) T_{1,2}^{g}(t_{2}t_{1,2}) \dif\mu
=
\int r_{1}s_{1} T_{1}^{g}(s_{2}t_{2}) \cdot r_{1,2} T_{1,2}^{g}(t_{1,2}) \dif\mu
\]
By Lemma~\ref{lem:ch3} this equals
\[
\int \E(r_{1}s_{1}|\IF{1}\wedge\KF{2}) \cdot T_{1}^{g} \E(s_{2}t_{2}|\IF{2}\wedge\KF{1}) \cdot r_{1,2} T_{1,2}^{g}(t_{1,2}) \dif\mu
=
\int h_{0} T_{1}^{g} h_{1} T_{1,2}^{g} h_{2} \dif\mu
\]
for some $h_{0}\in L^{\infty}(X)$, $h_{1}\in L^{\infty}(\KF{1})$, and $h_{2}\in L^{\infty}(\KF{1,2})$.
In view of Theorem~\ref{thm:A+W}, by density and linearity we may assume that $h_{1}\in H_{1}$ and $h_{2}\in H_{1,2}$, where $H_{i}$ are finite-dimensional $T_{i}$-invariant subspaces of $L^{2}(X)$.
Let $\{b_{i}^{j}\}_{j}$ be orthonormal bases of $H_{i}$.
Then we have $T_{i}^{g}b_{i}^{j} = \sum_{l} \pi_{i}(g)^{j}_{l}b_{i}^{l}$ with measurable antihomomorphisms $\pi_{i}:G\to U(\dim H_{i})$.
Writing $h_{i}=\sum_{j}a_{i}^{j}b_{i}^{j}$ we obtain
\begin{multline*}
\int h_{0} T_{1}^{g} h_{1} T_{1,2}^{g} h_{2} \dif\mu
=
\int h_{0} \sum_{l,j} \pi_{1}(g)^{j}_{l}a_{1}^{j}b_{1}^{l} \sum_{l',j'} \pi_{1,2}(g)^{j'}_{l'}a_{1,2}^{j'}b_{1,2}^{l'} \dif\mu\\
=
\sum_{l,j,l',j'} \pi_{1}(g)^{j}_{l} \pi_{1,2}(g)^{j'}_{l'} a_{1}^{j} a_{1,2}^{j'} \int h_{0} b_{1}^{l} b_{1,2}^{l'} \dif\mu,
\end{multline*}
and this is a matrix coefficient function.
\end{proof}

Finally, we need to ensure existence of ergodic magic extensions.
\begin{lemma}
\label{lem:ergodic-sated}
Suppose that $X\in\eMPS$ is $\Inv$-sated in $\eMPS$.
Then $X$ is also $\Inv$-sated in $\MPS$.
\end{lemma}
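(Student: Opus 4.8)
The plan is to reduce satedness against arbitrary extensions to satedness against ergodic ones by means of the ergodic decomposition, the crucial input being that the base $X$ is ergodic. So fix an extension $\pi\colon X'\to X$ with $X'=(X',\mu',T_{1},\dots,T_{k})$ in $\MPS$; identifying $L^{2}(X)$ with a subspace of $L^{2}(X')$ via $\pi$, we must show that $\mathcal{X}$ and $\Inv X'$ are relatively independent over $\Inv X$ with respect to $\mu'$, that is, that every $f\in L^{2}(\mathcal{X})$ with $\E(f|\Inv X)=0$ satisfies $f\perp_{\mu'}\Inv X'$.

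First I would invoke the disintegration $\mu'=\int\mu'_{\omega}\,\dif\nu(\omega)$ over the $\sigma$-algebra $\mathcal{J}:=\IF{1}\wedge\dots\wedge\IF{k}$ of sets invariant under all $T_{i}$, so that each component $X'_{\omega}:=(X',\mu'_{\omega},T_{1},\dots,T_{k})$ lies in $\eMPS$. The key point, and the place where ergodicity of $X$ enters, is that almost every $X'_{\omega}$ is again an extension of $X$. To see this I would show that $\mathcal{X}$ and $\mathcal{J}$ are independent: for $f\in L^{2}(\mathcal{X})$ the mean ergodic theorem for the $G^{k}$-action $(T_{1},\dots,T_{k})$ represents $\E(f|\mathcal{J})$ as an $L^{2}$-limit of averages of the functions $T_{1}^{g_{1}}\cdots T_{k}^{g_{k}}f$, all of which lie in the closed, $T_{i}$-invariant subspace $L^{2}(\mathcal{X})$. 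Hence $\E(f|\mathcal{J})$ is an invariant element of $L^{2}(\mathcal{X})$, so it is $\mathcal{X}\wedge\mathcal{J}$-measurable, and since $X$ is ergodic this forces $\E(f|\mathcal{J})=\int f\,\dif\mu$. Applying this to the elements of a countable dense subalgebra of $\mathcal{X}$ yields $\mu'_{\omega}|_{\mathcal{X}}=\mu$ for a.e.\ $\omega$; in other words $\pi$ realizes each $X'_{\omega}$ as an extension of $X$ in $\eMPS$.

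Next I would feed each such ergodic extension $X'_{\omega}\to X$ into the hypothesis. Fix $f\in L^{2}(\mathcal{X})$ with $\E(f|\Inv X)=0$. Since conditional expectation onto $\Inv X\subseteq\mathcal{X}$ depends only on $\mu'_{\omega}|_{\mathcal{X}}=\mu$, we get $\E_{\mu'_{\omega}}(f|\Inv X)=0$, and $\Inv$-satedness of $X$ in $\eMPS$ then gives $f\perp_{\mu'_{\omega}}\Inv X'_{\omega}$ for a.e.\ $\omega$. To transfer this back to $X'$ I would compare the maximal $\Inv$-factors: because $\Inv X'=\IF{1}\vee\dots\vee\IF{k}$ and a function invariant under $T_{i}$ with respect to $\mu'$ remains invariant with respect to $\mu'_{\omega}$ for a.e.\ $\omega$, a countable-generator argument gives $\Inv X'\subseteq\Inv X'_{\omega}$ as sub-$\sigma$-algebras for a.e.\ $\omega$. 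Consequently $f\perp_{\mu'_{\omega}}\Inv X'$ for a.e.\ $\omega$, and integrating the identity $\int \bar h f\,\dif\mu'_{\omega}=0$, valid for every $h\in L^{\infty}(\Inv X')$ since such $h$ is $\Inv X'$-measurable on each component, against $\dif\nu(\omega)$ gives $\int \bar h f\,\dif\mu'=0$. Thus $f\perp_{\mu'}\Inv X'$, which is the desired relative independence.

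The main obstacle is the step guaranteeing that the ergodic components are genuine extensions of $X$: without $\mu'_{\omega}|_{\mathcal{X}}=\mu$ one cannot apply the hypothesis to $X'_{\omega}\to X$, and this is exactly where ergodicity of the base is indispensable, since for a non-ergodic base the components would only see the invariant-measure decomposition of the base and the reduction would collapse. A secondary technical point is the comparison $\Inv X'\subseteq\Inv X'_{\omega}$ under decomposition; this is transparent for the class $\Inv$ in question precisely because it is built from the invariant $\sigma$-algebras $\IF{i}$, which only grow when passing to ergodic components, and the measurability in $\omega$ of all the objects involved is best handled by fixing one countable dense subalgebra of $\mathcal{X}$ and one countable generating family for each $\IF{i}$ throughout.
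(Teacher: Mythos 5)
Your proof is correct and follows essentially the same route as the paper: decompose the given extension into ergodic components, check that almost every component is an ergodic extension of $X$ whose maximal $\Inv$-factor contains that of $X'$, apply satedness in $\eMPS$ componentwise, and integrate over the components. The only (cosmetic) differences are that you justify the key fact that a.e.\ component extends $X$ via the mean ergodic theorem, where the paper simply asserts it (the standard argument being extremality of the ergodic measure $\mu$), and that you phrase the transfer step as a $\sigma$-algebra inclusion $\Inv X'\subseteq\Inv X'_{\omega}$, where the paper instead approximates a single function $f'\in L^{\infty}(\Inv X')$ by combinations of products of invariant functions and passes to a subsequence.
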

\begin{proof}
Let $(Y,\nu)\to (X,\mu)$ be an extension and let $f'\in L^{\infty}(\Inv Y)$, $f\in L^{\infty}(X)$ be bounded.
Let also $\nu=\int \nu_{y}\dif\nu(y)$ be the ergodic decomposition of $\nu$.
We have $f' = \lim_{n\to\infty} f'_{n}$ in $L^{2}(\nu)$, where each $f'_{n}$ is a finite linear combination of products of bounded $T_{i}$-invariant functions for $i=1,\dots,k$.
Passing to a subsequence we may assume that the same is true in $L^{2}(\nu_{y})$ for almost every $y$, so that $f'\in L^{\infty}(\Inv (Y,\nu_{y}))$ for almost every $y$.

Since almost every $(Y,\nu_{y})$ is an (ergodic) extension of $(X,\mu)$ and by satedness of $X$ in $\eMPS$ this implies that
\[
\int f' f \dif\nu_{y} = \int f' \E(f|\Inv X) \dif\nu_{y}.
\]
Integrating over $y\in Y$ we obtain
\[
\int f' f \dif\nu = \int f' \E(f|\Inv X) \dif\nu,
\]
and, since $f'$ and $f$ were arbitrary, this shows that $\Inv Y$ and $X$ are relatively independent over $\Inv X$.
\end{proof}

\begin{corollary}
\label{cor:ergodic-magic}
Every $X\in\eMPS[2]$ admits an ergodic magic extension.
\end{corollary}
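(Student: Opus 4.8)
The plan is to build the desired extension by combining Theorem~\ref{thm:sated-ext}, Lemma~\ref{lem:ergodic-sated}, and Proposition~\ref{prop:KI-in-sated}; the only genuine verification needed is that the ambient ergodic class is closed under the operation required by the satedness construction.

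First I would apply Theorem~\ref{thm:sated-ext} with the idempotent class $\Inv[2]$ and with $\mathcal{C}=\eMPS[2]$ to produce an extension $X'\to X$ that is $\Inv[2]$-sated in $\eMPS[2]$. Before doing so I must check the hypothesis that $\eMPS[2]$ is closed under inverse limits of sequences. To that end, let $X'=\varprojlim X_{i}$ with factor maps $\pi_{i}:X'\to X_{i}$, so that $\mathcal{X}'=\vee_{i}\pi_{i}\inv(\mathcal{X}_{i})$, and suppose $f\in L^{2}(X')$ is both $T_{1}$- and $T_{2}$-invariant. Each $\sigma$-algebra $\pi_{i}\inv(\mathcal{X}_{i})$ is $T_{1}$- and $T_{2}$-invariant because $\pi_{i}$ intertwines the actions, so the conditional expectation $\E(f\mid\pi_{i}\inv(\mathcal{X}_{i}))$ commutes with $T_{1}^{g}$ and $T_{2}^{g}$ and is therefore itself invariant under both actions. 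Viewed on $X_{i}$ it lies in $\IF{1}\wedge\IF{2}$ of $X_{i}$, which is trivial by ergodicity, hence this conditional expectation is constant. Since $\E(f\mid\pi_{i}\inv(\mathcal{X}_{i}))\to f$ in $L^{2}$ as $i\to\infty$ by the martingale convergence theorem, $f$ is constant. Thus $X'$ is ergodic and $\eMPS[2]$ is closed under inverse limits, so the application of Theorem~\ref{thm:sated-ext} is legitimate (idempotence of $\Inv[2]$ having already been noted).

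Next I would upgrade the satedness. The extension $X'$ obtained above is $\Inv[2]$-sated only within $\eMPS[2]$, whereas Proposition~\ref{prop:KI-in-sated} requires satedness in $\MPS$: its proof invokes Lemma~\ref{lem:k=2-cube-measure}, whose vanishing conclusion is tested against the cube extension $\mu^{\square}$ on $X'^{4}$, an object of $\MPS$ that need not be ergodic. This gap is bridged by Lemma~\ref{lem:ergodic-sated} with $k=2$: since $X'\in\eMPS[2]$ is $\Inv[2]$-sated in $\eMPS[2]$, it is in fact $\Inv[2]$-sated in $\MPS[2]$. Proposition~\ref{prop:KI-in-sated} then yields that $X'$ is magic, and as $X'$ is ergodic by construction it is the sought ergodic magic extension of $X$.

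The main obstacle is precisely the interplay between ergodicity and satedness: one cannot simply invoke Theorem~\ref{thm:sated-ext} with $\mathcal{C}=\MPS$, because a sated extension produced there need not remain ergodic, while conversely Proposition~\ref{prop:KI-in-sated} is not directly available for satedness relative to $\eMPS[2]$ alone. The role of Lemma~\ref{lem:ergodic-sated} is exactly to reconcile these two requirements, so the crux is to manufacture satedness inside the ergodic category (which forces the inverse-limit closure check above) and then transport it to the full category, where the magic property can be read off.
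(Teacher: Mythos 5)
Your proposal is correct and follows exactly the paper's own proof: apply Theorem~\ref{thm:sated-ext} with $\mathcal{C}=\eMPS[2]$ (the paper simply asserts closure under inverse limits as ``clear,'' which your martingale argument verifies), then upgrade to satedness in $\MPS[2]$ via Lemma~\ref{lem:ergodic-sated}, and conclude magicness from Proposition~\ref{prop:KI-in-sated}. Your explanation of \emph{why} Lemma~\ref{lem:ergodic-sated} is indispensable (the cube extension in Lemma~\ref{lem:k=2-cube-measure} need not be ergodic) is precisely the point of that lemma in the paper.
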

\begin{proof}
The class $\eMPS[2]$ is clearly closed under inverse limits, so we may apply Theorem~\ref{thm:sated-ext} with $\mathcal{C}=\eMPS[2]$.
The resulting system is $\Inv[2]$-sated by Lemma~\ref{lem:ergodic-sated}, so it is magic by Proposition~\ref{prop:KI-in-sated}.
\end{proof}

Now all the tools required for the proof of our main result have been made available.
We proceed with its formulation and proof. 
\begin{theorem}
\label{thm:recurrence}
Let $X\in\eMPS[2]$ and let $f : X \to [0,1]$ be a measurable function.
Then for every $\epsilon>0$ there exists an almost periodic function $\chi : G\to\R_{\geq 0}$ such that $\UClim_{g} \chi(g) = 1$ and
\[
\UClim_{g} \chi(g) \int f T_{1}^{g} f T_{1,2}^{g} f \dif\mu \geq \big( \int f \dif\mu \big)^{4}-\epsilon.
\]
\end{theorem}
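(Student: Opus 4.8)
The plan is to reduce the problem, via the machinery of the preceding sections, to a single positivity statement about an almost periodic correlation function, and then to choose the weight $\chi$ so as to extract the lower bound.

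First I would pass to a convenient extension. By Corollary~\ref{cor:ergodic-magic} the system admits an ergodic magic extension $\pi:X'\to X$; replacing $f$ by $f\circ\pi$ changes neither $\int f\,\dif\mu$ nor the correlations $\int f\,T_1^g f\,T_{1,2}^g f\,\dif\mu$, since $\pi$ intertwines the actions, and a weight that works for $(X',f\circ\pi)$ works verbatim for $(X,f)$. Hence I may assume $X$ itself is ergodic and magic. Writing $a=\int f\,\dif\mu$, the next step is to identify the characteristic factor: for every $\chi\in\AP(G)$, Corollary~\ref{cor:char-factors-sated-weighted} together with continuity of $h\mapsto\int h\,\dif\mu$ on $L^2(X)$ gives
\[
\UClim_g\chi(g)\int f\,T_1^g f\,T_{1,2}^g f\,\dif\mu=\UClim_g\chi(g)\,\phi(g),
\]
where $\phi(g):=\int\E(f|\IF1\vee\KF{1,2})\,T_1^g\E(f|\IF1\vee\IF2)\,T_{1,2}^g\E(f|\IF2\vee\KF{1,2})\,\dif\mu$. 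The three conditional expectations satisfy exactly the hypotheses of Lemma~\ref{lem:approx-by-matrix-coeff}, so $\phi\in\AP(G)$.

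Next I would translate the problem to a compact group. By Theorem~\ref{thm:ap} write $\phi=\psi\circ\iota$ for a compactification $\iota:G\to K$ and $\psi\in C(K)$, with Haar measure $\nu$. For any $\eta\in C(K)$ with $\eta\ge0$ and $\int_K\eta\,\dif\nu=1$, the function $\chi:=\eta\circ\iota$ is a nonnegative almost periodic weight with $\UClim_g\chi(g)=1$, and since $\chi\phi=(\eta\psi)\circ\iota$ Corollary~\ref{cor:Clim-positive} gives $\UClim_g\chi(g)\phi(g)=\int_K\eta\psi\,\dif\nu$. As $\nu$ has full support and $\psi$ is continuous, $\eta$ may be chosen to concentrate near a maximizer of $\psi$, so that $\int_K\eta\psi\,\dif\nu\ge\sup_K\psi-\epsilon=\sup_g\phi(g)-\epsilon$. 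Thus the entire theorem reduces to the single inequality $\sup_g\phi(g)\ge a^4$.

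The hard part is this last inequality, and it is where the exponent $4$ is forced. The unweighted average $\UClim_g\phi(g)=\int_K\psi\,\dif\nu$ is in general strictly smaller than $a^4$ (as the spectral expansion of the triple correlation on a single rotation already indicates), so genuine use must be made of positivity and of the structure of the characteristic factor. The key structural input is that the two directions $T_1$ and $T_{1,2}$ act through the independent factors $\IF1$ and $\IF2$: by Lemma~\ref{lem:I1-I2-indep} these are relatively independent over the trivial $\sigma$-algebra $\IF1\wedge\IF2$, and by Lemma~\ref{lem:ch3} the invariant almost periodic part of $\IF1\vee\IF2$ splits as a tensor product along the two directions. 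I would use this to write the extremal weighted value of $\phi$ as an integral over the two independent compact groups and then, exploiting that $\E(f|\cdot)\in[0,1]$ is nonnegative, apply Jensen's inequality in each direction separately, the two resulting factors of $a^2$ multiplying to $a^4$. Equivalently, one constructs $\chi$ directly as a normalized nonnegative almost periodic function adapted to this product structure, using that the correlation $\Theta(g)=\int f\,T_1^g f\,T_{1,2}^g f\,\dif\mu$ is manifestly nonnegative (because $f\ge0$) and agrees with $\phi$ in all almost periodic weighted averages. Organizing this two-directional averaging so that the independence cleanly yields the fourth power, rather than the $a^3$ that a naive one-directional Jensen argument at the identity would give, is the main obstacle; the preceding reductions are routine consequences of the machinery already developed.
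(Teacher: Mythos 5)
Your reductions track the paper's proof almost exactly: pass to an ergodic magic extension (Corollary~\ref{cor:ergodic-magic}), replace the correlation by the almost periodic function $\kappa(g)=\int\E(f|\IF{1}\vee\KF{1,2})\,T_{1}^{g}\E(f|\IF{1}\vee\IF{2})\,T_{1,2}^{g}\E(f|\IF{2}\vee\KF{1,2})\,\dif\mu$ (Lemma~\ref{lem:approx-by-matrix-coeff} together with Corollary~\ref{cor:char-factors-sated-weighted}), and manufacture the weight from a nonnegative bump function; your variant of the weight (a bump on the compactification $K$ concentrated near a maximizer of $\psi$, normalized using Corollary~\ref{cor:Clim-positive}) is a legitimate alternative to the paper's $\phi\circ\kappa$, and it correctly reduces the whole theorem to the single inequality $\sup_{g}\kappa(g)\geq\big(\int f\,\dif\mu\big)^{4}$. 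But at exactly that point your proof stops: you offer only a vague ``two-directional Jensen'' plan via Lemma~\ref{lem:I1-I2-indep} and Lemma~\ref{lem:ch3}, and you concede that carrying it out is ``the main obstacle.'' That is a genuine gap -- the one step that produces the exponent $4$ is never proved.

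Moreover, your reason for rejecting the evaluation at the identity is mistaken; that evaluation is precisely how the paper closes the argument, in two lines. Since $0\leq f\leq 1$ and all three conditional expectations are nonnegative,
\[
\kappa(\id_{G})=\int\E(f|\IF{1}\vee\KF{1,2})\,\E(f|\IF{1}\vee\IF{2})\,\E(f|\IF{2}\vee\KF{1,2})\,\dif\mu
\geq\int f\,\E(f|\IF{1}\vee\KF{1,2})\,\E(f|\IF{1}\vee\IF{2})\,\E(f|\IF{2}\vee\KF{1,2})\,\dif\mu,
\]
and the right-hand side is at least $\big(\int f\,\dif\mu\big)^{4}$ by the inequality $\int f\prod_{i=1}^{3}\E(f|\mathcal{A}_{i})\,\dif\mu\geq\big(\int f\,\dif\mu\big)^{4}$, valid for arbitrary sub-$\sigma$-algebras $\mathcal{A}_{1},\mathcal{A}_{2},\mathcal{A}_{3}$ and $f:X\to[0,1]$; this is \cite[Lemma 1.6]{MR2794947}, which the paper cites rather than reproves. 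Your worry that a ``one-directional Jensen argument at the identity would give $a^{3}$'' applies only when the three $\sigma$-algebras coincide; with three distinct $\sigma$-algebras the natural bound is the fourth power, and this is exactly where the exponent $4$ in the theorem comes from. Note also that Lemma~\ref{lem:I1-I2-indep} and Lemma~\ref{lem:ch3}, which you want to press into service for the lower bound, are used in the paper only to establish the almost periodicity of $\kappa$ (i.e., inside Lemma~\ref{lem:approx-by-matrix-coeff}); no independence or product-structure argument is needed for the inequality itself.
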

The idea to use an almost periodic weight in order to obtain a lower bound for multiple ergodic averages first appeared in the work of Frantzikinakis \cite{MR2415080}.
Since the almost periodic function $\chi$ is necessarily bounded, Theorem~\ref{thm:recurrence} implies in particular that the uniform \cesaro{} limit of $\int f T_{1}^{g} f T_{1,2}^{g} f \dif\mu$ is positive if $f\not\equiv 0$.
\begin{proof}
By Corollary~\ref{cor:ergodic-magic} we may assume that $X$ is magic.
Let $\epsilon>0$ be arbitrary.
By Lemma~\ref{lem:approx-by-matrix-coeff} the function
\[
\kappa(g) := \int \E(f|\IF{1}\vee \KF{1,2}) T_{1}^{g} \E(f|\IF{1}\vee \IF{2}) T_{1,2}^{g} \E(f|\IF{2}\vee \KF{1,2}) \dif\mu
\]
is almost periodic.
Note that we have
\[
\kappa(\id_{G})
\geq
\int f \E(f|\IF{1}\vee \KF{1,2}) \E(f|\IF{1}\vee \IF{2}) \E(f|\IF{2}\vee \KF{1,2}) \dif\mu
\geq
\big( \int f \dif\mu \big)^{4} =: B
\]
by \cite[Lemma 1.6]{MR2794947}.
Let $\phi:\R\to\R_{\geq 0}$ be a continuous function such that $\phi(x)=0$ if $x\leq B-\epsilon$ and $\phi(x)=1$ if $x\geq B$.
Since $\AP(G)$ is a closed algebra and by the Weierstra\ss{} approximation theorem the function $\phi\circ\kappa$ is in $\AP(G)$.
Moreover this function is positive and equals $1$ at the identity, so $\UClim_{g} \phi(\kappa(g))>0$ by Corollary~\ref{cor:Clim-positive}.
Let $\chi := (\UClim_{g} \phi(\kappa(g)))\inv \phi\circ\kappa$.
Note that $\chi\cdot\kappa \geq \chi\cdot(B-\epsilon)$.
By Corollary~\ref{cor:char-factors-sated-weighted} this implies
\[
\UClim_{g} \chi(g) \int f T_{1}^{g} f T_{1,2}^{g} f \dif\mu
=
\UClim_{g} \chi(g) \kappa(g)
\geq
\UClim_{g} \chi(g) (B-\epsilon)
=
B - \epsilon.
\qedhere
\]
\end{proof}
We note that the above proof also yields a generalization of \cite[Theorem 1.3]{MR2794947} to actions of amenable groups.
Indeed, if $\mathcal{X}=\IF{1}\vee \IF{2}$ and $f\in L^{\infty}(X)$, then $\E(f|\IF{1}\vee \IF{2}) = f$ on any extension of $X$.
Therefore \cite[Lemma 1.6]{MR2794947} in fact gives $\kappa(\id) \geq (\int f \dif\mu)^{3}$ in that case.

\begin{proof}[Proof of Theorem~\ref{thm:erg}]
Suppose first that for some $\epsilon>0$ the set $R_{\epsilon}$ is not left syndetic.
Then there exists a left \folner{} sequence $F$ in $G$ none of whose members intersects $R_{\epsilon}$.
Consider the matrix coefficient function $\chi$ given by Theorem~\ref{thm:recurrence} with $f=1_{A}$ and $\epsilon/2$ in place of $\epsilon$.
By the assumption we have
\[
\chi(g) \int f T_{1}^{g} f T_{1,2}^{g} f \leq \chi(g) \big(\big( \int f \dif\mu \big)^{4} - \epsilon \big)
\]
for every $g\in F_{N}$ for every $N$, contradicting the conclusion of Theorem~\ref{thm:recurrence}.

In order to see that $R_{\epsilon}$ is also right syndetic it suffices to notice that
\[
R_{\epsilon}\inv
= \{g : \mu(A\cap T_{1}^{g\inv}A \cap T_{1,2}^{g\inv}A) \geq \mu(A)^{4}-\epsilon\}
= \{g : \mu(T_{1,2}^{g}A\cap T_{2}^{g}A \cap A) \geq \mu(A)^{4}-\epsilon\},
\]
and this set is left syndetic by the above argument with the roles of $T_{1}$ and $T_{2}$ reversed.
\end{proof}

\section{Combinatorial application}
Since the lower bound in our multiple recurrence theorem only holds for ergodic systems, we need an appropriate version of the Furstenberg correspondence principle.

Recall that a point $x$ of a compact metric space $X$ is called \emph{quasi-generic} with respect to a probability measure $\mu$ for a continuous action $T$ of $G$ on $X$ if there exists a left \folner{} sequence such that for every $f\in C(X)$ we have $\int f \dif\mu = \lim_{N} \int f(T^{g}x) \dif F_{N}(g)$.
It follows from the mean ergodic theorem that if $\mu$ is ergodic, then $\mu$-a.e.\ point is quasi-generic.
As pointed out by Furstenberg, this implies the following version of \cite[Proposition 3.9]{MR603625} with identical proof.
\begin{proposition}
\label{prop:dense-quasi-generic}
Let $X$ be a compact metric space and $T$ a continuous $G$-action.
Then every point $x_{0}\in X$ such that $\overline{T^{G}x_{0}}=X$ is quasi-generic for every ergodic invariant probability measure on $X$.
\end{proposition}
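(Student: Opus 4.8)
The plan is to deduce quasi-genericity of the single point $x_{0}$ from the already-noted fact that $\mu$-almost every point is quasi-generic, exploiting the density of the orbit of $x_{0}$ together with a diagonal argument.

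First I would fix the data. Since $\mu$ is ergodic, the mean ergodic theorem (as noted just before the statement) guarantees that the set of quasi-generic points for $\mu$ has full measure; in particular it is non-empty, so I may choose a quasi-generic point $y\in X$ together with a left \folner{} sequence $(F_{N})$ for which $\int f(T^{g}y)\dif F_{N}(g)\to\int f\dif\mu$ for every $f\in C(X)$. Because $\overline{T^{G}x_{0}}=X\ni y$ and $X$ is metrizable, I may pick group elements $g_{k}\in G$ with $T^{g_{k}}x_{0}\to y$. Finally, since $X$ is compact metric, $C(X)$ is separable, so I fix a countable dense subset $\{f_{i}\}_{i}\subset C(X)$; it suffices to establish the desired limit for each $f_{i}$, the general case following by density and the uniform bound $\|f\|_{\infty}$ on all the averaging functionals.

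Next I would set up two successive approximations and diagonalize. For a \emph{fixed} index $N$, joint continuity of the action gives $T^{g}(T^{g_{k}}x_{0})=T^{gg_{k}}x_{0}\to T^{g}y$ for every $g$, so by bounded convergence against the probability measure $F_{N}$ we have $\int f(T^{gg_{k}}x_{0})\dif F_{N}(g)\to\int f(T^{g}y)\dif F_{N}(g)$ as $k\to\infty$. Combining this with the convergence $\int f(T^{g}y)\dif F_{N}(g)\to\int f\dif\mu$ as $N\to\infty$, I would, for each $M$, first choose $N_{M}\geq M$ so large that $\big|\int f_{i}(T^{g}y)\dif F_{N_{M}}(g)-\int f_{i}\dif\mu\big|<1/(2M)$ for all $i\leq M$, and then choose $g_{k_{M}}$ so that $\big|\int f_{i}(T^{gg_{k_{M}}}x_{0})\dif F_{N_{M}}(g)-\int f_{i}(T^{g}y)\dif F_{N_{M}}(g)\big|<1/(2M)$ for all $i\leq M$.

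Finally I would assemble the witnessing sequence and conclude. Set $G_{M}:=F_{N_{M}}*\delta_{g_{k_{M}}}$, which is the normalized measure supported on the right translate of the \folner{} set $F_{N_{M}}$ by $g_{k_{M}}$; the substitution $g'=gg_{k_{M}}$ gives $\int f(T^{g'}x_{0})\dif G_{M}(g')=\int f(T^{gg_{k_{M}}}x_{0})\dif F_{N_{M}}(g)$, so by construction $\int f_{i}(T^{g}x_{0})\dif G_{M}(g)\to\int f_{i}\dif\mu$ for every $i$, hence for every $f\in C(X)$ by density. It remains to check that $(G_{M})$ is itself a left \folner{} sequence, and this is the one step requiring care—the main (if mild) obstacle. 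The point is that right translation preserves the left \reiter{} property without changing the \folner{} constants: for any $h\in\probmeas(G)$ and any fixed $t\in G$ one has $\|h*(F_{N}*\delta_{t})-F_{N}*\delta_{t}\|=\|(h*F_{N}-F_{N})*\delta_{t}\|\leq\|h*F_{N}-F_{N}\|$ (equivalently, the modular function cancels in the ratio $m(sF_{N}t\,\Delta\,F_{N}t)/m(F_{N}t)=m(sF_{N}\,\Delta\,F_{N})/m(F_{N})$ for every $s$). Since $N_{M}\to\infty$, the right-hand side tends to $0$, so $(G_{M})$ is indeed a left \folner{} sequence, and $x_{0}$ is quasi-generic for $\mu$ along it.
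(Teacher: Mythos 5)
Your proposal is correct and coincides with the proof the paper has in mind: the paper establishes this proposition by citing \cite[Proposition 3.9]{MR603625} ``with identical proof'', and Furstenberg's argument is exactly your diagonal scheme---pick a quasi-generic point $y$, approximate it along the dense orbit of $x_{0}$, and average over the right translates $F_{N_{M}}g_{k_{M}}$, which remain a left \folner{} sequence. Your treatment of the only delicate points (bounded convergence for fixed $N$, and invariance of the left \folner{}/\reiter{} property under right translation via cancellation of the modular function) is accurate, so there is nothing to correct.
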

This can be used to prove a version of the Furstenberg correspondence principle \cite[Theorem 4.17]{MR1776759} that provides an ergodic measure-preserving system.
\begin{lemma}
\label{lem:correspondence}
Let $G$ be a countable amenable group and $E\subset G$.
Then there exists an ergodic measure-preserving system $(X,\mu,T)$ and a measurable subset $A\subset X$ with $\mu(A)\geq\ud(E)$ such that
\[
\mu(T^{g_{1}}A\cap\dots\cap T^{g_{k}}A) \leq \ud(g_{1}E \cap\dots\cap g_{k}E)
\]
for any $k\in\N$ and $g_{1},\dots,g_{k}\in G$.
\end{lemma}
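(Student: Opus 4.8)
The plan is to build the measure-preserving system as a closure of the orbit of the indicator sequence of $E$ inside a symbolic shift space, and then extract an ergodic component that preserves the density information. First I would realize $E$ as a point in the shift space $\{0,1\}^{G}$, on which $G$ acts by the left shift $T$ defined by $(T^{g}x)(h)=x(g\inv h)$. Write $x_{0}=1_{E}\in\{0,1\}^{G}$ and let $X:=\overline{T^{G}x_{0}}$ be the orbit closure, a compact metrizable space. The natural cylinder set $A:=\{x\in X : x(\id)=1\}$ is clopen, and for any $g_{1},\dots,g_{k}\in G$ one checks directly that $1_{T^{g_{1}}A\cap\dots\cap T^{g_{k}}A}(T^{g}x_{0})=1_{g_{1}E\cap\dots\cap g_{k}E}(g)$, so that averaging this continuous indicator against a \folner{} sequence computes exactly the relevant density of the intersection along that \folner{} sequence.

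Next I would choose the \folner{} sequence and the measure carefully so as to match the two inequalities in the statement. By definition of $\ud(E)$ there is a left \folner{} sequence $F$ realizing the upper Banach density of $E$, i.e.\ $\lim_{N}\int 1_{A}(T^{g}x_{0})\dif F_{N}(g)=\ud(E)$ (passing to a subsequence so the limit exists). Taking a further subsequence, one obtains a weak$^{*}$ limit measure $\mu_{0}$ on $X$ for which $\mu_{0}(A)=\ud(E)$; this $\mu_{0}$ is $T$-invariant because $F$ is a \folner{} sequence. By Proposition~\ref{prop:dense-quasi-generic}, every orbit-dense point—in particular $x_{0}$—is quasi-generic for \emph{every} ergodic invariant measure on $X$, which is the key leverage point: it lets me pass from $\mu_{0}$ to an ergodic component while retaining the density bounds.

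The main obstacle, and the step deserving the most care, is selecting an ergodic invariant measure $\mu$ that simultaneously keeps $\mu(A)\geq\ud(E)$ and forces the upper bound $\mu(T^{g_{1}}A\cap\dots\cap T^{g_{k}}A)\leq\ud(g_{1}E\cap\dots\cap g_{k}E)$ for all finite tuples at once. The plan is to use the ergodic decomposition $\mu_{0}=\int\mu\,\dif P(\mu)$ over the ergodic invariant measures: since $\mu_{0}(A)=\ud(E)$ is the average of $\mu(A)$ over the decomposition, some ergodic component has $\mu(A)\geq\ud(E)$, giving the first inequality. For the second inequality I would invoke quasi-genericity: for the chosen ergodic $\mu$ there is a \folner{} sequence along which $x_{0}$ is generic, so $\mu(T^{g_{1}}A\cap\dots\cap T^{g_{k}}A)=\lim_{N}\int 1_{g_{1}E\cap\dots\cap g_{k}E}(g)\dif F_{N}(g)\leq\ud(g_{1}E\cap\dots\cap g_{k}E)$, the last step being the definition of upper Banach density as a supremum over all \folner{} sequences. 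The delicate point is that a single ergodic $\mu$ must handle all countably many tuples; this is resolved precisely because Proposition~\ref{prop:dense-quasi-generic} guarantees $x_{0}$ is quasi-generic for that \emph{fixed} ergodic $\mu$ via one \folner{} sequence, so all intersection densities are controlled by the same generic limit and hence bounded by the supremal density $\ud$.
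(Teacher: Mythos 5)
Your overall strategy is exactly the paper's: form the orbit closure of $1_{E}$ in $\{0,1\}^{G}$, take a weak* limit of orbital averages along a \folner{} sequence realizing $\ud(E)$ to obtain an invariant measure $\nu$ with $\nu(A)=\ud(E)$, pick an ergodic component $\mu$ with $\mu(A)\geq\nu(A)$, and then use Proposition~\ref{prop:dense-quasi-generic} (quasi-genericity of the orbit-dense point for the chosen ergodic measure, with a single \folner{} sequence handling all countably many tuples) together with clopenness of the cylinder sets to get the upper bounds. All of that matches the paper and is sound.

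However, your central identity is false for the shift you chose, and this is where the proof as written fails. With the left shift $(T^{g}x)(h)=x(g\inv h)$ one has $T^{g_{1}}A=\{x:x(g_{1})=1\}$, hence
\[
1_{T^{g_{1}}A}(T^{g}x_{0})=x_{0}(g\inv g_{1})=1_{E}(g\inv g_{1}),
\]
and $g\inv g_{1}\in E$ means $g\in g_{1}E\inv$, not $g\in g_{1}E$. Likewise $1_{A}(T^{g}x_{0})=x_{0}(g\inv)=1_{E\inv}(g)$. So averaging along a left \folner{} sequence computes densities of $E\inv$ and of $g_{i}E\inv$ (equivalently, right-\folner{} densities of $E$ and its translates): your argument proves the lemma with $\ud(E)$ and $\ud(g_{1}E\cap\dots\cap g_{k}E)$ replaced by $\ud(E\inv)$ and $\ud(g_{1}E\inv\cap\dots\cap g_{k}E\inv)$. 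Bridging this to the stated inequalities would require knowing that left and right upper Banach densities agree, which you neither address nor should take for granted in a noncommutative amenable group. The fix is immediate and is what the paper does: use instead the right shift $(T^{g}x)_{h}=x_{hg}$, which is still a left $G$-action. Then $1_{A}(T^{g}x_{0})=x_{0}(g)=1_{E}(g)$ and $1_{T^{g_{1}}A}(T^{g}x_{0})=x_{0}(g_{1}\inv g)=1_{g_{1}E}(g)$, and with this correction the rest of your argument goes through verbatim.
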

The $G=\Z$ case of Lemma~\ref{lem:correspondence} was first used by Bergelson, Host, and Kra \cite[Proposition 3.1]{MR2138068}, who got the idea from Lesigne.
\begin{proof}
If $\ud(E)=0$, then we can consider a one-point system $X$ and take $A$ to be the empty set.
Thus we may assume that $E$ has positive upper Banach density.

Consider $X'=\{0,1\}^{G}$ with the product topology and the left $G$-action $(T^{g}x)_{h}=x_{hg}$.
Let $e\in X'$ be the indicator function of $E$ and set $X:=\overline{T^{G}e}$.
Set also $A := \{x\in X:x_{\id}=1\}$.
Let $F$ be a left \folner{} sequence in $G$ such that $\lim_{N} \frac{|E\cap F_{N}|}{|F_{N}|}=\ud (E)$.
Passing to a subsequence we may assume that the sequence of measures $(\int \delta_{T^{g}e} \dif F_{N}(g))_{N}$ converges weakly.
Its limit $\nu$ is a $T$-invariant probability measure supported on $X$ such that $\nu(A)=\ud(E)$.
By the ergodic decomposition there exists an ergodic $T$-invariant probability measure $\mu$ on $X$ such that $\mu(A)\geq\nu(A)$.

By Proposition~\ref{prop:dense-quasi-generic} the point $e$ is quasi-generic for $\mu$.
Let $\Phi$ be a left \folner{} sequence that witnesses the quasi-genericity.
Since the set $T^{g_{1}}A\cap\dots\cap T^{g_{k}}A$ is clopen, we have
\begin{multline*}
\mu(T^{g_{1}}A\cap\dots\cap T^{g_{k}}A)
= \lim_{N} \int 1_{T^{g_{1}}A\cap\dots\cap T^{g_{k}}A}(T^{g}e) \dif\Phi_{N}(g)
= \lim_{N} \int \prod_{i=1}^{k}1_{A}(T^{g_{i}\inv g}e) \dif\Phi_{N}(g)\\
= \lim_{N} |\Phi_{N}|\inv |\{g\in\Phi_{N}:g_{i}\inv g\in E,i\leq k\}|
= \lim_{N} |\Phi_{N}|\inv |\Phi_{N}\cap g_{1} E \cap \dots \cap g_{k}E|
\leq \ud (g_{1} E \cap \dots \cap g_{k}E).
\end{multline*}
\end{proof}

\begin{proof}[Proof of Theorem~\ref{thm:combi}]
Let $(X,\mu,T)$ and $A\subset X$ be the ergodic system and the measurable subset obtained by applying Lemma~\ref{lem:correspondence} to $E\subset G\times G$.
Writing $T_{1}^{g}:=T^{(g,\id)}$ and $T_{2}^{g}:=T^{(\id,g)}$ we have $\ud(E\cap (g,\id)E\cap (g,g)E) \geq \mu(A\cap T_{1}^{g}A\cap T_{1,2}^{g}A)$.
By Theorem~\ref{thm:erg} the latter quantity is bounded below by $\mu(A)^{4}-\epsilon$ for a set of $g$ that is both left and right syndetic.
On the other hand, $\mu(A)^{4}-\epsilon \geq \ud(E)^{4}-\epsilon$, and we obtain the claim.
\end{proof}

\appendix

\section{Mean convergence for $k=3$}
In this appendix we give a new proof of the $k=3$ case of \cite[Theorem 1.1(2)]{arxiv:1111.7292} using the machinery of sated extensions.
\begin{proposition}
\label{prop:conv-k=3}
Suppose that $X\in\MPS[3]$ and $f_{1},f_{2},f_{3}\in L^{\infty}(X)$.
Then the limit
\[
\UClim_{g} T_{1}^{g}f_{1} T_{1,2}^{g}f_{2} T_{1,2,3}^{g}f_{3}
\]
exists in $L^{2}(X)$.
\end{proposition}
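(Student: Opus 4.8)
The plan is to reduce the three‑dimensional average to the already‑established two‑dimensional theory by a single application of the van der Corput lemma, after passing to a sated extension on which the pair $(T_2,T_3)$ is magic. Since the average of functions pulled back from $X$ may be computed on any extension, I would first invoke Theorem~\ref{thm:sated-ext} with $\mathcal{C}=\MPS[3]$ and the idempotent class $\Inv$ of three‑action systems satisfying $\mathcal{X}=\IF{2}\vee\IF{3}$, and then run the cube‑measure argument of Proposition~\ref{prop:KI-in-sated} on the $(T_2,T_3)$‑cube extension, endowed with the diagonal $T_1$‑action, to conclude that after this replacement $A(X|\IF{3},T_2)=\IF{2}\vee\IF{3}$; that is, the $(T_2,T_3)$‑reduct of $X$ is magic.

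Writing $u_g:=T_1^g f_1\,T_{1,2}^g f_2\,T_{1,2,3}^g f_3$ and applying Corollary~\ref{cor:vdC}, the $G$‑action property of each $T_\epsilon^{\cdot}$ gives $\langle u_g,u_{hg}\rangle=\int T_1^g f_1^{(h)}\,T_{1,2}^g f_2^{(h)}\,T_{1,2,3}^g f_3^{(h)}\,\dif\mu$, where $f_1^{(h)}=f_1\overline{T_1^h f_1}$, $f_2^{(h)}=f_2\overline{T_{1,2}^h f_2}$, $f_3^{(h)}=f_3\overline{T_{1,2,3}^h f_3}$; applying the measure‑preserving $T_1^{-g}$ inside the $g$‑integral yields
\[
\int_g \langle u_g,u_{hg}\rangle\,\dif F_m(g) = \int f_1^{(h)}\Big(\int_g T_2^g f_2^{(h)}\,T_{2,3}^g f_3^{(h)}\,\dif F_m(g)\Big)\dif\mu.
\]
The inner average is a two‑dimensional Furstenberg average for the commuting pair $(T_2,T_3)$; by Corollary~\ref{cor:conv-k=2} it converges in $L^2$, and by Corollary~\ref{cor:char-factors-sated-weighted} (trivial weight, in the magic $(T_2,T_3)$‑reduct) its limit is unchanged upon replacing $f_3^{(h)}$ by $\E(f_3^{(h)}|\IF{3}\vee\KF{2,3})$, so its $L^2$‑norm is at most $C\|\E(f_3^{(h)}|\IF{3}\vee\KF{2,3})\|_2$.

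Feeding this back into Corollary~\ref{cor:vdC} and applying Cauchy--Schwarz in the $h$‑average over a two‑sided \reiter{} sequence $F'$, I obtain
\[
\limsup_m \Big\|\int_g u_g\,\dif F_m(g)\Big\|^2 \leq C\,\liminf_n \Big(\int_h \big\|\E(f_3\,\overline{T_{1,2,3}^h f_3}|\IF{3}\vee\KF{2,3})\big\|_2^2\,\dif F'_n(h)\Big)^{1/2}.
\]
Because $\IF{3}\vee\KF{2,3}$ is a $T_{1,2,3}$‑invariant factor, the right‑hand side is exactly the quantity defining membership in $W(X|\IF{3}\vee\KF{2,3},T_{1,2,3})$ in Theorem~\ref{thm:A+W}; hence $\UClim_g u_g=0$ along every \reiter{} sequence whenever $f_3\perp \mathcal{Z}_3:=A(X|\IF{3}\vee\KF{2,3},T_{1,2,3})$. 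By multilinearity it then remains only to treat $f_3\in L^\infty(\mathcal{Z}_3)$.

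For such $f_3$, lying in a finite‑rank $T_{1,2,3}$‑invariant $L^\infty(\IF{3}\vee\KF{2,3})$‑module, I would fix a generating family and expand $T_{1,2,3}^g f_3$ through the matrix coefficients of the finite‑dimensional unitary cocycle over the base $\IF{3}\vee\KF{2,3}$ describing the $T_{1,2,3}$‑action on the module; substituting this into $u_g$ expresses the three‑dimensional average as a finite sum of two‑dimensional averages in $f_1,f_2$ carrying almost periodic weights, each convergent by Corollaries~\ref{cor:conv-k=2} and~\ref{cor:char-factors-sated-weighted}, and summation gives existence and \reiter{}‑independence of $\UClim_g u_g$. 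I expect this last step to be the main obstacle: $\mathcal{Z}_3$ is a \emph{relative} Kronecker factor over $\IF{3}\vee\KF{2,3}$, so the cocycle entries are $L^\infty$‑functions on the base rather than scalars and do not split off as honest $\AP(G)$‑weights. Carrying out the decoupling into the available weighted $k=2$ results — together with the upstream verification that the sated extension of the first paragraph really does render $(T_2,T_3)$ magic while remaining an object of $\MPS[3]$ — is where the substance of the argument lies.
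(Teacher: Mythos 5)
Your van der Corput computation is correct as far as it goes --- for fixed $h$ it produces exactly the quantity that the paper packages as the Furstenberg coupling measure $\mu^{F}$ --- and your preliminary step (sating within $\MPS[3]$ against the class $\{\mathcal{X}=\IF{2}\vee\IF{3}\}$ so that the $(T_{2},T_{3})$-reduct becomes magic) is workable: the diagonal $T_{1}$-action preserves the $(T_{2},T_{3})$-cube measure by commutativity and the uniqueness in Lemma~\ref{lem:coupling}, so the cube extension is an object of $\MPS[3]$ extending $X$ and the argument of Proposition~\ref{prop:KI-in-sated} goes through. The genuine gap is precisely the one you flag at the end: your reduction makes the characteristic factor for $f_{3}$ equal to $\mathcal{Z}_{3}=A(X|\IF{3}\vee\KF{2,3},T_{1,2,3})$, a \emph{relative} almost periodic factor over a nontrivial base, and you have no tool for the structured case $f_{3}\in L^{\infty}(\mathcal{Z}_{3})$. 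The finite-rank module expansion yields coefficients that are $L^{\infty}(\IF{3}\vee\KF{2,3})$-valued unitary cocycles, not scalar matrix coefficients; they are not elements of $\AP(G)$, so Corollaries~\ref{cor:conv-k=2} and~\ref{cor:char-factors-sated-weighted} cannot absorb them, and none of the idempotent classes used in the paper converts $\mathcal{Z}_{3}$ into a join of invariant $\sigma$-algebras (your satedness hypothesis controls $A(X|\IF{3},T_{2})$, which says nothing about relative almost periodicity with respect to $T_{1,2,3}$ over a base containing $\KF{2,3}$). That structured case is essentially as hard as the proposition itself, so the proof is incomplete.

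The paper avoids this dead end by making the \emph{middle} function carry the structure, and by arranging that the structure is a join of invariant algebras rather than anything Kronecker-like. Concretely, it sates $X$ against $\mathcal{J}=\{\mathcal{X}=\IF{2}\vee\IF{3}\vee\IF{1,2}\}$ and processes the same van der Corput expression through the mean ergodic theorem on the coupling $(X^{3},\mu^{F})$, obtaining Lemma~\ref{lem:lift}: the triple average vanishes whenever $\E_{\mu^{F}}(f_{1}\otimes f_{2}\otimes f_{3}|\IF{F1,F2})=0$. The decisive observation is that $f_{1}\otimes 1\otimes 1$ is $\IF{F2}$-measurable and $1\otimes 1\otimes f_{3}$ is $\IF{F3}$-measurable, so the outer functions are absorbed by invariant algebras of the coupling, and satedness then gives the vanishing whenever $f_{2}\perp\IF{2}\vee\IF{3}\vee\IF{1,2}$. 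The structured case is trivial by design: for $f_{2}=h_{2}h_{3}h_{1,2}$ the integrand collapses to $h_{1,2}\,T_{1}^{g}(f_{1}h_{2})\,T_{1,2,3}^{g}(f_{3}h_{3})$, a $k=2$ average for the commuting pair $(T_{1},T_{2,3})$, and Corollary~\ref{cor:conv-k=2} finishes. If you want to salvage your outline, the lesson is that the van der Corput data must be organized so that the resulting characteristic factor is a join of algebras of the form $\IF{\epsilon}$ --- which is what forces the coupling construction and the class $\mathcal{J}$ --- rather than a relative Kronecker factor.
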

In order to use a satedness argument we have to construct an interesting extension of a system $X\in\MPS[3]$.
We consider the \emph{Furstenberg coupling} $X^{F}$, which consists of the following data.
The base space is $X^{3}$ and the measure is given by
\[
\int f_{0} \otimes f_{1} \otimes f_{2} \dif\mu^{F} := \UClim_{g} \int f_{0} T_{2}^{g}f_{1} T_{2,3}^{g} f_{2} \dif\mu.
\]
This limit exists by Corollary~\ref{cor:conv-k=2}, and Lemma~\ref{lem:coupling} shows that this does determine a unique measure on $X^{k+1}$.
We have the following $G$-actions on $X^{F}$:
\[
T_{F1}=T_{1}\times T_{1}\times T_{1},
\quad
T_{F2}=\Id \times T_{2}\times T_{2,3},
\quad
T_{F3}=T_{2,3}\times T_{3}\times \Id.
\]
The action $T_{F1}$ preserves the measure $\mu^{F}$ since $T_{1}$ commutes with $T_{2}$ and $T_{3}$.
The action $T_{F2}$ preserves the measure $\mu^{F}$ by left invariance of the uniform \cesaro{} limit and the action $T_{F3}$ by right invariance of the uniform \cesaro{} limit.
Moreover, these actions clearly commute.
Hence we see that $X^{F}$ is an extension of $X$ under the projection $\pi$ onto the second coordinate.

We also need a special case of a lemma from \cite{robertson} which in turn generalizes \cite[Lemma 4.7]{MR2599882}.
\begin{lemma}
\label{lem:lift}
Let $X\in\MPS[3]$ and $f_{1},f_{2},f_{3}$ be bounded functions on $X$.
Suppose that
\[
\E_{\mu^{F}}(f_{1}\otimes f_{2}\otimes f_{3} | \IF{F1,F2}) = 0.
\]
Then
\[
\UClim_{g} T_{1}^{g} f_{1} T_{1,2}^{g}f_{2} T_{1,2,3}^{g} f_{3} = 0
\qquad \text{in norm of } L^{2}(X).
\]
\end{lemma}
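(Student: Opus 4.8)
The plan is to peel off one multiplicative factor by a single application of the van der Corput inequality, recognizing the resulting degree-two average as the defining average of the Furstenberg coupling $\mu^{F}$, and then to feed the hypothesis in through the mean ergodic theorem on $X^{F}$. Write $u_{g} := T_{1}^{g}f_{1}\, T_{1,2}^{g}f_{2}\, T_{1,2,3}^{g}f_{3}\in L^{2}(X)$; since the $f_{i}$ are bounded, $g\mapsto u_{g}$ is a bounded measurable map into a Hilbert space, so Corollary~\ref{cor:vdC} applies and it suffices to show that
\[
\liminf_{n}\limsup_{m} \int_{g}\int_{h} \<u_{g},u_{hg}\> \dif F'_{n}(h)\dif F_{m}(g) = 0
\]
for the two-sided \reiter{} sequence $F'$ produced there.

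The heart of the matter is the evaluation of $\<u_{g},u_{hg}\>$. I would apply the operator $T_{1}^{g\inv}$ inside the integral over $X$, which is legitimate since $\mu$ is $T_{1}$-invariant. The crucial point is that, because the actions induce \emph{anti}-representations on $L^{2}$, one has $T_{1}^{g\inv}T_{1}^{hg}=T_{1}^{h}$ with \emph{no surviving conjugation by $g$}; this is exactly what makes the possibly non-commutative case go through. Using in addition that $T_{1},T_{2},T_{3}$ commute, a direct computation should give
\[
\<u_{g},u_{hg}\> = \int_{X} G_{0}^{h}\cdot T_{2}^{g}G_{1}^{h}\cdot T_{2,3}^{g}G_{2}^{h} \dif\mu,
\]
where $G_{0}^{h}=f_{1}\,T_{1}^{h}\bar f_{1}$, $G_{1}^{h}=f_{2}\,T_{1,2}^{h}\bar f_{2}$, $G_{2}^{h}=f_{3}\,T_{1,2,3}^{h}\bar f_{3}$ do \emph{not} depend on $g$. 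Averaging over $g$ and using the very definition of $\mu^{F}$ (the limit existing by Corollary~\ref{cor:conv-k=2}) then identifies, for each fixed $h$,
\[
\lim_{m}\int_{g}\<u_{g},u_{hg}\>\dif F_{m}(g) = \int_{X^{F}} G_{0}^{h}\otimes G_{1}^{h}\otimes G_{2}^{h}\dif\mu^{F} = \<F,\,(T_{F1}T_{F2})^{h}F\>_{L^{2}(\mu^{F})},
\]
where $F:=f_{1}\otimes f_{2}\otimes f_{3}$ and we have used $T_{F1}T_{F2}=T_{1}\times T_{1,2}\times T_{1,2,3}$, so that $G_{0}^{h}\otimes G_{1}^{h}\otimes G_{2}^{h}=F\cdot\overline{(T_{F1}T_{F2})^{h}F}$.

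Finally I would substitute this back; since all quantities are bounded by $\prod_{i}\|f_{i}\|_{\infty}^{2}$, the $\limsup_{m}$ is a genuine limit by bounded convergence, and interchanging the integrals gives
\[
\liminf_{n}\int_{h}\<F,(T_{F1}T_{F2})^{h}F\>\dif F'_{n}(h) = \liminf_{n}\big\<F,\ \textstyle\int_{h}(T_{F1}T_{F2})^{h}F\dif F'_{n}(h)\big\>.
\]
By the mean ergodic theorem along the (two-sided) \reiter{} sequence $F'$, the inner averages converge in $L^{2}(\mu^{F})$ to $\E_{\mu^{F}}(F\mid\IF{F1,F2})$, the conditional expectation onto the invariants of $T_{F1}T_{F2}$, which vanishes by hypothesis; hence the displayed expression is $0$. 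Since the van der Corput bound is nonnegative, this forces $\lim_{m}\|\int u_{g}\dif F_{m}\|=0$, and as the argument is valid for every \reiter{} sequence $F$ we conclude $\UClim_{g}u_{g}=0$. I expect the main obstacle to be purely computational: carefully tracking the shifts in the evaluation of $\<u_{g},u_{hg}\>$ so that the $g$-dependence assembles precisely into the $T_{2}^{g},T_{2,3}^{g}$ pattern of the defining formula for $\mu^{F}$, while the residual $h$-dependence reassembles into $(T_{F1}T_{F2})^{h}F$ paired against $F$. It is this exact matching, together with the anti-action identity that kills the conjugation, that lets the hypothesis be applied verbatim.
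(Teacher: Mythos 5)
Your proposal is correct and takes essentially the same route as the paper's proof: van der Corput (Corollary~\ref{cor:vdC}), the anti-action identity to remove the conjugation by $g$, identification of the limiting $g$-average with $\int G_{0}^{h}\otimes G_{1}^{h}\otimes G_{2}^{h}\dif\mu^{F}$, and then the mean ergodic theorem for the diagonal action $T_{F1}T_{F2}$ on $X^{F}$ to invoke the hypothesis. The only difference is cosmetic: the paper carries out the same computation with $T_{1}^{hg}=T_{1}^{g}T_{1}^{h}$ and factors $T_{1}^{g}$ out of the integrand rather than applying $T_{1}^{g\inv}$, and it leaves the limit interchanges you spell out implicit.
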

\begin{proof}
Fix a left \reiter{} sequence $F$.
By Corollary~\ref{cor:vdC} it suffices to show that
\[
0 = \liminf_{n} \limsup_{m} \iiint T_{1}^{g} f_{1} T_{1,2}^{g} T_{1,2,3}^{g} f_{3} \cdot T_{1}^{hg} f_{1} T_{1,2}^{hg} f_{2} T_{1,2,3}^{hg} f_{3} \dif\mu \dif F'_{n}(h) \dif F_{m}(g).
\]
This expression equals
\begin{multline*}
\liminf_{n} \limsup_{m} \iiint
T_{1}^{g} f_{1} T_{1,2}^{g} f_{2} T_{1,2,3}^{g} f_{3} \cdot T_{1}^{g}T_{1}^{h} f_{1} T_{1,2}^{g}T_{1,2}^{h}f_{2} T_{1,2,3}^{g}T_{1,2,3}^{h} f_{3}
\dif\mu \dif F'_{n}(h) \dif F_{m}(g)\\
=
\liminf_{n} \limsup_{m} \iiint
f_{1} T_{2}^{g} f_{2} T_{2,3}^{g} f_{3} \cdot T_{1}^{h} f_{1} T_{2}^{g}T_{1,2}^{h}f_{2} T_{2,3}^{g}T_{1,2,3}^{h} f_{3}
\dif\mu \dif F'_{n}(h) \dif F_{m}(g)\\
=
\liminf_{n} \iint
f_{1}T_{1}^{h} f_{1} \otimes f_{2}T_{1,2}^{h}f_{2} \otimes f_{3} T_{1,2,3}^{h} f_{3}
\dif\mu^{F} \dif F'_{n}(h).
\end{multline*}
By the mean ergodic theorem this equals
\[
\| \E_{\mu^{F}}(f_{1}\otimes f_{2}\otimes f_{3} | \IF{F1,F2}) \|_{L^{2}(\mu^{F})}^{2} = 0.
\qedhere
\]
\end{proof}

\begin{proof}[Proof of Proposition~\ref{prop:conv-k=3}]
Consider the idempotent subclass $\mathcal{J}$ of $\MPS[3]$ that consists of the systems with $\mathcal{X}=\IF{2} \vee \IF{3} \vee \IF{1,2}$.
By Theorem~\ref{thm:sated-ext} we may assume that $X$ is $\mathcal{J}$-sated.
Assume now that $X\in\MPS[3]$ is $\mathcal{J}$-sated and $f_{2} \perp \IF{2} \vee \IF{3} \vee \IF{1,2}$.
By satedness we have $1\otimes f_{2}\otimes 1 \perp \IF{F2} \vee \IF{F3} \vee \IF{F1,F2}$.
Moreover, for any $f_{1},f_{3}\in L^{\infty}(X)$ the function $f_{1}\otimes 1\otimes 1$ is $\IF{F2}$-measurable and the function $1\otimes 1\otimes f_{3}$ is $\IF{F3}$-measurable.
Therefore $f_{1}\otimes f_{2}\otimes f_{3} \perp \IF{F1,F2}$.
By Lemma~\ref{lem:lift} this implies
\[
\UClim_{g} T_{1}^{g}f_{1} T_{1,2}^{g}f_{2} T_{1,2,3}^{g}f_{3} = 0.
\]
Hence we may assume that $f_{2}$ is $\IF{2}\vee \IF{3}\vee \IF{1,2}$-measurable.
By density and linearity it suffices to consider $f_{2} = h_{2}h_{3}h_{1,2}$, where $h_{i}$ is $T_{i}$-invariant.
In this case we have
\[
T_{1}^{g}f_{1} T_{1,2}^{g} f_{2} T_{1,2,3}^{g}f_{3}
=
h_{1,2} T_{1}^{g}(f_{1}h_{2}) T_{1,2,3}^{g}(f_{3}h_{3}),
\]
and this converges in the uniform \cesaro{} sense by Corollary~\ref{cor:conv-k=2} applied to the actions $T_{1}$ and $T_{2,3}$.
\end{proof}

The main difficulty in extending this approach to the multiple ergodic theorem to $k>3$ commuting actions consists in explicitly constructing a suitable extension, which can be used to exploit satedness, of a given action.
Note that for discrete groups a new tool for constructing such extensions appeared in Austin's proof, see \cite[Theorem 2.1]{arXiv:1309.4315}.

\section{Small correlation sequences}
The lower bound in the ergodic Roth theorem \cite[Theorem 1.2]{MR2138068} might suggest that $\Clim_{n} \mu(A\cap T^{n}A\cap T^{2n}A) \geq \mu(A)^{3}$ for any measure-preserving transformation $(X,T)$ and any $A\subset X$.
The following counterexample shows that this is not the case.

Consider an irrational rotation $T$ by $\alpha$ on the circle $X = \R\mod 1$ and let $B\subset X$ be an interval of length $\delta\leq 1/3$.
Then the function $m(x):=\mu(B-x \cup B \cup B+x)$ has the following form.

\begin{center}
\begin{tikzpicture}
\draw[->] (0,0) -- (9,0) node[anchor=north] {$x$};
\draw[->] (0,0) -- (0,4) node[anchor=east] {$m(x)$};
\draw (0,1) -- (2,3) -- (3,3) -- (4,2) -- (5,3) -- (6,3) -- (8,1);
\draw[dotted] (2,0) node[anchor=north] {$\delta$} -- (2,3);
\draw[dotted] (3,0) node[anchor=north] {$\frac{1-\delta}{2}$} -- (3,3);
\draw[dotted] (4,0) node[anchor=north] {$\frac12$} -- (4,2);
\draw[dotted] (5,0) node[anchor=north] {$\frac{1+\delta}{2}$} -- (5,3);
\draw[dotted] (6,0) node[anchor=north] {$1-\delta$} -- (6,3);
\draw[dotted] (8,0) node[anchor=north] {$1$} -- (8,1);
\draw (0,0) node[anchor=east] {$0$} (0,1) node[anchor=east] {$\delta$} (0,2) node[anchor=east] {$2\delta$} (0,3) node[anchor=east] {$3\delta$};
\end{tikzpicture}
\end{center}

Let now $A:=X\setminus B$.
Then
\begin{multline*}
\lim_{N} \frac1N \sum_{n=1}^{N} \mu(A\cap T^{n}A \cap T^{2n}A)
=
\lim_{N} \frac1N \sum_{n=1}^{N} (1-m(n\alpha))
=
1 - \int_{0}^{1}m\\
=
1-(3\delta - \frac52 \delta^{2})
=
(1-\delta)^{3} - \delta^{2}/2 + \delta^{3}
<
\mu(A)^{3}.
\end{multline*}

We will now describe an example that shows that the exponent in Theorem~\ref{thm:recurrence} cannot be improved to 3.
The construction is based on \cite[Theorem 1.2]{MR2794947}, but the function $f$ that appears in the proof has been optimized numerically to maximize the exponent (however, we do not claim that this exponent is the best possible).
As a pleasant side effect the optimal function has a particularly simple form.
\begin{theorem}
\label{thm:small-correlation}
For every countable amenable group $G$ there exist a system $X\in\eMPS[2]$ such that the group generated by $T_1$ and $T_2$ acts weakly mixingly and a measurable set $A$ with $0<\mu(A)<1$ such that 
\[
\mu(A\cap T_1^g A\cap T_{1,2}^g A)< \mu(A)^{3.19}
\]
for all $g\neq id_G$.
\end{theorem}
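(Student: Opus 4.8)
The plan is to adapt the $G=\Z$ construction of \cite[Theorem 1.2]{MR2794947} to a general $G$, using the characteristic-factor machinery developed above to replace the explicit computations that are available only over $\Z$. First I would pass to an ergodic magic model and invoke Corollary~\ref{cor:char-factors-sated-weighted} with $\chi\equiv1$ to express the correlation function $g\mapsto\mu(A\cap T_{1}^{g}A\cap T_{1,2}^{g}A)$ through the conditional expectations of $f=1_{A}$ onto the characteristic factors $\IF{1}\vee\KF{1,2}$, $\IF{1}\vee\IF{2}$ and $\IF{2}\vee\KF{1,2}$; by Lemma~\ref{lem:approx-by-matrix-coeff} this reduced function is almost periodic, and its value at the identity is bounded below by $\mu(A)^{4}$ via \cite[Lemma 1.6]{MR2794947}. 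The task is then to engineer a system in which this reduced functional can be computed explicitly and an optimized choice of $A$ forces it below $\mu(A)^{3.19}$ for every $g\neq\id_{G}$.

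The key structural point I would isolate first is that the Kronecker factor $\KF{1,2}$ of the diagonal action $T_{1,2}$ must be nontrivial. Indeed, if $\KF{1,2}$ were trivial then, using that $\IF{1}$ and $\IF{2}$ are independent (Lemma~\ref{lem:I1-I2-indep}), the reduced functional would collapse to the constant $\int\E(f|\IF{1})\,\E(f|\IF{1}\vee\IF{2})\,\E(f|\IF{2})\,\dif\mu$, which one checks is always at least $\mu(A)^{3}>\mu(A)^{3.19}$, so the correlation could not be uniformly small. I would therefore build a weakly mixing system in which $T_{1,2}$ carries a genuine rotation factor on a compact abelian group $Z$ while $T_{1}$ acts on $Z$ by ergodically acting automorphisms; the automorphisms destroy precompactness of the full $\langle T_{1},T_{2}\rangle$-orbit of the eigenfunctions of $T_{1,2}$, so the whole system stays weakly mixing even though the diagonal subaction has discrete spectrum. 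Crucially, weak mixing (indeed mixing) of the ambient system means that every $g\neq\id_{G}$ decorrelates the complementary directions, so there is no dangerous ``near-identity'' range of $g$ and the correlation is pinned to the explicit $Z$-integral uniformly over $g\neq\id_{G}$.

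With such a model in place the problem becomes a finite-dimensional optimization: choosing the profile $A_{0}\subset Z$ (the numerically optimized set, which in the end has a particularly simple form) so as to minimize the explicit correlation integral relative to $\mu(A)$. I would verify that the optimal profile produces an exponent strictly exceeding $3.19$ while keeping $0<\mu(A)<1$, and then transfer the estimate back to $X$. Since the reduced functional is almost periodic and its Cesàro average over $g$ governs the supremum, the numerical optimum must be checked to hold pointwise, not merely on average.

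I expect the main obstacle to lie in the construction for arbitrary countable amenable $G$ rather than in the optimization itself. Over $\Z$ the rotation-and-automorphism model is classical, but for a general $G$ one must produce a $G$-action carrying a nontrivial compact Kronecker factor together with a commuting family of automorphisms acting ergodically enough, while simultaneously guaranteeing that $X\in\eMPS[2]$ and that $\langle T_{1},T_{2}\rangle$ acts weakly mixingly. Torsion groups and groups with few finite-dimensional unitary representations make the required compact structure delicate to realize, and confirming that the explicit correlation remains below $\mu(A)^{3.19}$ for \emph{all} $g\neq\id_{G}$ is where most of the technical care will be needed.
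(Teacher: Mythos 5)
There is a genuine gap, and it sits exactly where your plan leans hardest. The theorem demands an upper bound at \emph{every} $g\neq\id_G$, but all of the machinery you invoke (magic extensions, Corollary~\ref{cor:char-factors-sated-weighted}, Lemma~\ref{lem:approx-by-matrix-coeff}) controls only uniform \cesaro{} limits; it says nothing about the correlation at any individual group element. Your proposed bridge --- that weak mixing (or mixing) of the ambient system ``decorrelates the complementary directions'' for every $g\neq\id_G$, so that the correlation is ``pinned to the explicit $Z$-integral uniformly over $g\neq\id_G$'' --- is false: mixing is an asymptotic statement (decay as $g$ leaves compact sets) and weak mixing only a density statement, so neither constrains the correlation at a fixed $g$. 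A second, independent obstruction is your model itself: a nontrivial compact rotation factor $Z$ for $T_{1,2}$ requires nontrivial finite-dimensional unitary representations of $G$, and these need not exist --- there are minimally almost periodic countable amenable groups (e.g.\ the infinite finitary alternating group) for which every such $Z$ is trivial. You flag this as ``the main obstacle'' but do not resolve it, so the construction that the whole argument rests on is missing.

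The paper's proof avoids both problems by being entirely explicit and combinatorial. Take $Y=\{0,1,2\}^G$ with the uniform Bernoulli measure $\nu$, let $R$ and $L$ be the right and left Bernoulli shifts, and set $X=Y^{3}$, $\mu=\nu^{3}$, $T_1=R\times\Id\times R$, $T_2=L\times R\times\Id$; the set $A$ is where the three coordinates at $\id_G$ are pairwise distinct, so $\mu(A)=3!/3^{3}$. The point is that for any $g\neq\id_G$ the six coordinates $y_{\id},y_{g^{-1}},z_{\id},z_{g^{-1}},w_{\id},w_{g^{-1}}$ appearing in $\mu(A\cap T_1^gA\cap T_{1,2}^gA)$ are distinct coordinates of a product measure, hence \emph{exactly} independent --- this is what gives pointwise-in-$g$ control, and it is a combinatorial feature of Bernoulli shifts, not a consequence of mixing. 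The correlation then equals $3!/3^{6}$ for every $g\neq\id_G$, and $3!/3^{6}<(3!/3^{3})^{3.19}$. Incidentally, the structure your heuristic predicts is present, but realized differently than you propose: under $T_{1,2}$ the first coordinate undergoes the conjugation action of $G$, which fixes the coordinate at $\id_G$, so $\IF{1,2}$ (hence $\KF{1,2}$) is nontrivial for every countable $G$ --- no compact rotation, and hence no finite-dimensional representation theory, is needed.
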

\begin{proof}
Consider $Y:=\{0, 1, 2\}^G$ and denote the $(\frac13,\frac13,\frac13)$-Bernoulli measure on $Y$ by $\nu$.
The two natural left $G$-actions on $Y$ are the right Bernoulli shift $(R^g y)_h=y_{hg}$ and the left Bernoulli shift $(L^{g} y)_h=y_{g^{-1}h}$.
Note that these actions preserve $\nu$ and commute.
Therefore
\[
(X, \mu, T_1, T_2) := (Y\times Y\times Y, \nu\times\nu\times \nu, R\times \Id \times R, L\times R\times \Id )
\]
is a system in $\MPS[2]$.
Since the actions $L$ and $R$ are weakly mixing, it follows that $T_{1}$ and $T_{2}$ span a weakly mixing group action on $X$, and in particular we obtain $X\in\eMPS[2]$.

For $i, j,k\in \{0, 1, 2\}$ let
\[
f(i, j, k) :=
\begin{cases}
1 & \text{if $i,j,k$ are pairwise different},\\
0 & \text{otherwise}.
\end{cases}
\]
Let $F$ be the function on $X$ defined by $F(y, z, w)=f(y_{id_G}, z_{id_G}, w_{id_G})$.
The function $F$ is the indicator function of a measurable subset $A\subset X$ with $\mu(A)=3!/3^{3}$.
For every $g\neq id_G$ we have
\begin{multline*}
\mu(A\cap T_1^g A\cap T_{1,2}^g A)=\int F\cdot T_1^{g^{-1}}F \cdot T_{1,2}^{g^{-1}} F\, \dif\mu\\
=\int F(y, z, w)\cdot F(R^{g^{-1}}y, z, R^{g^{-1}}w)\cdot F(R^{g^{-1}}L^{g^{-1}}y, R^{g^{-1}}z, R^{g^{-1}}w)\, \dif\mu(y,z,w)\\
=\int f(y_{id_G}, z_{id_G}, w_{id_G})\cdot  f(y_{g^{-1}}, z_{id_G}, w_{g^{-1}}) \cdot  f(y_{id_G}, z_{g^{-1}}, w_{g^{-1}})\, \dif\nu(y) \dif\nu(z) \dif\nu(w).
\end{multline*}
Since $\nu$ is a product measure, this equals
\begin{multline*}
\frac {1}{3^6}\sum_{i, j,k, i',j' k'} f(i, j, k')f(i',j, k)f(i, j', k)\\
=
\frac{1}{3^{6}} \sum_{i, j,k} (\sum_{k' : \{i,j,k'\}=\{0,1,2\}} 1) (\sum_{i' : \{i',j,k\}=\{0,1,2\}} 1) (\sum_{j' : \{i,j',k\}=\{0,1,2\}} 1)\\
=
\frac{1}{3^{6}} \sum_{i, j,k} \delta_{i\neq j} \delta_{j\neq k} \delta_{i\neq k}
=
\frac{3!}{3^6}
<
\left( \frac{3!}{3^{3}} \right)^{3.19}
=
\mu(A)^{3.19}.
\qedhere
\end{multline*}
\end{proof}

\printbibliography
\end{document}